                    \def\version{July 7, 2025}                       %
\newcommand\blfootnote[1]{%
  \begingroup
  \renewcommand\thefootnote{}\footnote{#1}%
  \addtocounter{footnote}{-1}%
  \endgroup
}
\def\@rmrk#1#2{\refstepcounter
    {#1}\@ifnextchar[{\@yrmrk{#1}{#2}}{\@xrmrk{#1}{#2}}}
\makeatletter\@addtoreset{equation}{section}\makeatother
 \newfont{\bfit}{cmbxti10 scaled 1200}
\renewcommand{\d}{{\rm d}}
\newcommand{\Q}{\mathbb{Q}}
 \newcommand{\e}{{\rm e} }
 \newcommand{\eps}{\varepsilon}
 \newcommand{\R}{\mathbb{R}}
 \newcommand{\N}{\mathbb{N}}
 \newcommand{\Z}{\mathbb{Z}}
 \newcommand{\E}{\mathbb{E}}
 \renewcommand{\P}{\mathbb{P}}
 \def\1{{\mathchoice {1\mskip-4mu\mathrm l} 
{1\mskip-4mu\mathrm l}
{1\mskip-4.5mu\mathrm l} {1\mskip-5mu\mathrm l}}}
 \newcommand{\Mcal}{{\mathcal M}}
\newcommand{\heap}[2]{\genfrac{}{}{0pt}{}{#1}{#2}}
\newcommand{\ssup}[1] {{\scriptscriptstyle{({#1}})}}
\newcommand{\X}{\widetilde{\mathcal X}}
\newcommand{\bE}{\mathbb{E}}
\newcommand{\MT}{\mu_{\gamma, T}}
\newcommand{\F}{\mathcal{F}}
\DeclareMathOperator{\cov}{Cov}
\renewcommand{\subsection}{\secdef \subsct\sbsect}
\newcommand{\subsct}[2][default]{\refstepcounter{subsection}
\vspace{0.15cm}
{\flushleft\bf \arabic{section}.\arabic{subsection}~\bf #1  }
\nopagebreak\nopagebreak}
\newcommand{\sbsect}[1]{\vspace{0.1cm}\noindent
{\bf #1}\vspace{0.1cm}}
\newtheorem{theorem}{Theorem}[section]
\newtheorem{lemma}[theorem]{Lemma}
\newtheorem{cor}[theorem]{Corollary}
\newtheorem{prop}[theorem]{Proposition}
\newtheoremstyle{thm}{1.5ex}{1.5ex}{\itshape\rmfamily}{}
{\bfseries\rmfamily}{}{2ex}{}
\newtheoremstyle{rem}{1.3ex}{1.3ex}{\rmfamily}{}
{\itshape\rmfamily}{}{1.5ex}{}
\theoremstyle{rem}
\newtheorem{remark}{{\slshape\sffamily Remark}}[]
\def\thebibliography#1{\section*{References}
  \list%
  {\arabic{enumi}.}
    {\settowidth\labelwidth{[#1]}\leftmargin\labelwidth
    \advance\leftmargin\labelsep
    \parsep0pt\itemsep0pt
    \usecounter{enumi}}
    \def\newblock{\hskip .11em plus .33em minus .07em}
    \sloppy                   
    \sfcode`\.=1000\relax}
\begin{document}
\title[GMC and continuous directed polymer in the Wiener space in $d\geq 3$]
{\large Subcritical Gaussian multiplicative chaos in the Wiener space: construction, moments and volume decay}
\author[R. Bazaes, I. Lammers and C. Mukherjee]{}
\keywords{Continuous directed polymer, weak disorder, Gaussian multiplicative chaos, Liouville quantum gravity, thick points, Girsanov transformation, moments, H\"older exponents, small ball probability, multifractal spectrum}
\subjclass[2000]{60K35, 60G15, 60G57,60K37}

\maketitle
\thispagestyle{empty}
\vspace{-0.5cm}

\centerline{\sc Rodrigo Bazaes$^\dagger$\blfootnote{$^\dagger$Universit\"at M\"unster, Einsteinstr. 62, M\"unster 48149, {\tt rbazaes@uni-muenster.de}},
Isabel Lammers$^\ddagger$\blfootnote{$^\ddagger$Universit\"at M\"unster, Einsteinstr. 62, M\"unster 48149, {\tt isabel.lammers@uni-muenster.de}}, 
Chiranjib Mukherjee$^{\ddagger\ddagger}$\blfootnote{$^{\ddagger\ddagger}$Universit\"at M\"unster, Einsteinstr. 62, M\"unster 48149, {\tt chiranjib.mukherjee@uni-muenster.de}}}

\vspace{0.5cm}
\centerline{\textit{Universit\"at M\"unster}}
\vspace{0.2cm}

\begin{center}
\version
\end{center}

\begin{quote}{\small {\bf Abstract: }
We construct and study properties of an infinite dimensional analog of Kahane's theory of Gaussian multiplicative chaos \cite{K85}. Namely, if 
 $H_T(\omega)$ is a random field defined w.r.t. space-time white noise $\dot B$ and integrated w.r.t. Brownian paths in $d\geq 3$, 
 we consider the renormalized  exponential $\mu_{\gamma,T}$, weighted w.r.t. the Wiener measure $\P_0(\d\omega)$. 
We construct the almost sure limit $\mu_\gamma= \lim_{T\to\infty} \mu_{\gamma,T}$ in the {\it entire weak disorder (subcritical)} regime and call it {\it subcritical GMC on the Wiener space}. We show that 
$$
\mu_\gamma\Big\{\omega: \lim_{T\to\infty} \frac{H_T(\omega)}{T(\phi\star\phi)(0)} \ne \gamma\Big\}=0 \qquad \mbox{almost surely,}
$$ 
meaning that $\mu_\gamma$ is supported almost surely 
only on $\gamma$-{\it thick paths}, and consequently, the normalized version is singular w.r.t. 
the Wiener measure. We then characterize uniquely the limit $\mu_\gamma$ w.r.t. the mollification scheme $\phi$ in the sense of Shamov \cite{S14} -- we show that 
the law of $\dot B$ under the random {\it rooted} measure $\mathbb Q_{\mu_\gamma}(\d\dot B\d\omega)= \mu_\gamma(\d\omega,\dot B)P(\d\dot B)$ is the same 
as the law of the distribution  $f\mapsto \dot B(f)+ \gamma \int_0^\infty\int_{\R^d} f(s,y) \phi(\omega_s-y) \d s \d y$ under $P \otimes \P_0$.

We then determine the fractal properties of the measure around $\gamma$-thick paths: $-C_2 \leq \liminf_{\eps\downarrow 0} \eps^2 \log \widehat\mu_\gamma(\|\omega\| < \eps) \leq \limsup_{\eps\downarrow 0}\sup_\eta \eps^2 \log \widehat\mu_\gamma(\|\omega-\eta \| < \eps) \leq -C_1$
w.r.t a weighted norm $\|\cdot\|$. Here $C_1>0$ and $C_2<\infty$ are the uniform upper (resp. pointwise lower) H\"older exponents which are {\it explicit} in the entire weak disorder 
regime. Moreover, they converge to the scaling exponent of the Wiener measure as the disorder approaches zero. Finally, we establish negative and $L^p$ ($p>1$) moments for the total mass of $\mu_\gamma$ in the weak disorder regime.}
\end{quote}


\section{Introduction}

In this article, we construct and study properties of an infinite dimensional analog of the Gaussian multiplicative chaos (GMC) measures, namely, the measures 
\begin{equation}\label{eq0-infvol}
\begin{aligned}
&\mu_\gamma(\d\omega)=\lim_{T\to\infty} \mu_{\gamma,T}(\d\omega), \qquad\mbox{where}\\
&\mu_{\gamma,T}(\d\omega)= \exp\bigg(\gamma H_T(\omega) - \frac {\gamma^2}2 \mathbf E[H_T^2(\omega)]\bigg) \P_0(\d\omega). 
\end{aligned}
\end{equation}
Here, $\P_0$ stands for the Wiener measure corresponding to Brownian paths $\omega: [0,\infty) \mapsto \R^d$, and the Gaussian process $\{H_T(\omega)\}_{\omega\in \Omega}$, indexed by 
Brownian paths, is driven by 
a Gaussian space-time white noise $\dot B$ (under the probability measure $\mathbf P$) integrated w.r.t. the Brownian path:
\begin{equation}\label{eq0-H}
H_T(\omega)=H_T(\phi,\dot B,\omega)= \int_{\R^d}\int_0^T \phi(\omega_s- y) \dot B(s,y) \d s \d y,
\end{equation}
Here, $\phi$ is a normalized mollifier. Developing this framework for GMC measures is quite natural and important in the field due to the relations to
the continuous directed polymers as well as the multiplicative noise stochastic heat equation in $d\geq 3$ \cite{MSZ16}. For this case, it is known that there is a non-trivial constant $\gamma_c\in (0,\infty)$ such that the {\it weak disorder} or {\it sub-critical phase} 
is characterized by the uniform integrability of the martingale $\{\mu_{\gamma,T}(\Omega)\}_{T>0}$ when its almost sure limit $\lim_{T\to\infty} \mu_{\gamma,T}(\Omega)$ remains strictly positive.
 We note that the above definition of $\gamma_c$ is rather implicit and that makes the weak disorder phase $(0,\gamma_c)$ harder to analyze. \footnote{Also, in $d\geq 3$,  
 $\gamma_c$ is different and strictly larger than the $L^2$-threshold $\gamma_{L^2} :=\sup\{\gamma>0: \sup_T \mu_{\gamma,T}(\Omega) \in L^2(\mathbf P)\}$ defined by the $L^2$-boundedness of the martingale $\{\mu_{\gamma,T}(\Omega)\}_T$.}
Given this background, the following questions arise naturally: 

\begin{itemize}
\item Does the limiting measure $\lim_{T\to\infty} \mu_{\gamma,T}=\mu_\gamma$ exist, almost surely w.r.t. $\dot B$, and in the {\it entire weak disorder} regime $(0,\gamma_c)$? 
\medskip
\item Can we identify the support of the limit $\mu_\gamma$ on the path space, and characterize its dependence explicitly in terms of the disorder $\gamma\in (0,\gamma_c)$? 
\medskip
\item Is there a way to characterize this limit in terms of the mollification scheme $\phi$? 
\medskip
\item How does the measure $\mu_\gamma$ look like in a neighborhood of its support? Phrased differently, can the above construction yield quantitative and tractable (in terms of $\gamma\in (0,\gamma_c)$ and $\phi$) information about the local fractal geometry of $\mu_\gamma$ on ``points" close to its support? 
  
\end{itemize}
The goal of the present article is to answer the above questions. More precisely, our first main result in Theorem \ref{thm-existence} shows that, for any $d\geq 3$ and in the entire weak disorder regime $\gamma\in (0,\gamma_c)$, the infinite volume GMC measure
\begin{equation}\label{mugamma}
\mu_{\gamma}(\d\omega):=\lim_{T\to\infty}\mu_{\gamma,T}(\d\omega) \qquad\qquad\mathbf P\mbox{-a.s.}
\end{equation}
exists, is non-trivial and non-atomic. This limit, which is taken w.r.t. the topology of weak convergence, also exists if the Gaussian noise $\dot B$ is replaced by a random environment with finite exponential moment. 
Next, also in Theorem \ref{thm-existence}, we identify the support of this limit and show that 
\begin{equation}\label{muh}
\mu_\gamma\bigg(\omega: \lim_{T\to\infty} \frac{H_T(\omega)}{T(\phi\star\phi)(0)} \ne \gamma \bigg)=0\qquad\mathbf P\,\mbox{-a.s.}
\end{equation}
That is, for almost every realization of $\dot B$ and for any Brownian path $\omega$ 
sampled according to  infinite-volume limit $\mu_\gamma(\cdot)$, the value of the underlying field $H_T(\omega)$ is atypically large -- or, for any $\gamma\in (0,\gamma_c)$, 
every path $\omega$ is {\it $\gamma$-thick w.r.t. $\mu_\gamma$}. 
Consequently, the normalized probability measure $\widehat\mu_\gamma=\mu_\gamma/\mu_\gamma(\Omega)$ is almost surely {\it singular} w.r.t. the base measure $\P_0$. 

\smallskip

We next investigate the universality of the limit \eqref{mugamma} and determine the role of the cut-off $\phi$. Following Kahane's construction of log-correlated GMC, one expects that, in the current infinite dimensional setup, 
a well-defined limiting object should not depend so much on
the choice of the mollifier. For this purpose, and to emphasize the role of $\phi$, let us write $\mu_{\scriptscriptstyle{\gamma,H(\phi)}}$ for the infinite-volume limit (which is almost surely 
a functional of the field $H(\phi)$). In this vein, we first show that a strict uniqueness cannot hold in the current infinite dimensional setup -- Proposition \ref{prop-nonunique} implies 
that $\mu_{\scriptscriptstyle{\gamma,H(\phi)}}(\cdot)\ne \mu_{\scriptscriptstyle{\gamma,H(\phi^\prime)}}(\cdot)$ unless $\phi$ and $\phi^\prime$ are identically equal. Thus, at very small scales, the infinite-volume limit still remembers how the field $H$ was regularized (and in infinite dimensions, such a small dependence is conceivable). 
Then the question naturally arises if one can 
determine to what degree the limit $\mu_{\gamma,H(\phi)}$ depends on the mollifying scheme $\phi$. 
In this regard, denote by $P= \mathbf P \dot B^{-1}$ the law of the white noise (a probability measure on tempered distributions $\mathcal S^\prime(\R_+\times \R^d)$). In the third part of
 Theorem \ref{thm-existence} we show that $\mu_{\scriptscriptstyle{\gamma,H(\phi)}}$ 
is the {\it unique} measure such that the distribution of $H_T(\phi)$ under 
$\mu_\gamma(\d\dot B,\d\omega) P(\d \dot B)$ is the same as the distribution of $H_T(\phi)+T(\phi\star\phi)(0)$ under $P\otimes \P_0$. In other words, 
the only way to perturb {\it linearly} the distribution $\dot B$ with the test function 
$$
(s,y)\mapsto \phi(\omega_s-y)
$$
 is by using the limiting GMC measure $\mu_{\gamma,H(\phi)}$. That is, the limit satisfies a  ``Cameron-Martin equation" 
 \begin{equation}\label{CM}
 \mu_{\scriptscriptstyle{\gamma, H(\phi) + v}} (\d\omega)= \e^{v(\omega)} \mu_{\scriptscriptstyle{\gamma,H(\phi)}}(\d\omega)
 \end{equation}
  for all deterministic $v: \Omega \mapsto \R$ so that the law of $H(\phi)+v$ is absolutely continuous w.r.t. that of $H(\phi)$. 
 In other words, this limiting measure can be thought as a family  $\{\mu_{\gamma,H(\phi)}\}_\phi$, where
 each member verifies the Cameron-Martin equation \eqref{CM} for a fixed $\phi$, as the field $H$ (being a function of the noise $\dot B$) varies. 
 This Cameron-Martin characterization (i.e., validity of \eqref{CM}) is reminiscent of Shamov's definition \cite{S14} of finite-dimensional GMC, see Remark \ref{rem-uniqueness}.  
 Shamov's argument shows that, in finite-dimensions and for log-correlated fields, the solution to the Cameron-Martin equation is unique (see also the book by Berestycki and Powell \cite[Sec. 3.4]{BP21} where Shamov's argument is revisited in a simpler way).


 We subsequently deduce fractal properties of $\mu_\gamma$ and deduce the volume decay or {\it H\"older exponents} 
 of the normalized GMC measure $\widehat\mu_\gamma$ -- namely, in Theorem \ref{thm-exp-decay} we show that, for weak disorder, and $\mathbf P$-almost surely, 
 \begin{equation}\label{bounds}
 \liminf_{\eps\downarrow 0} \eps^2 \log \widehat\mu_\gamma\big(\|\omega\|<\eps\big) \geq - C_2, \qquad \limsup_{\eps\downarrow 0} \sup_{\eta\in \Omega_0} \eps^2 \log \widehat\mu_\gamma\big(\|\omega-\eta\|<\eps\big) \leq - C_1.
 \end{equation}
 Here, $\Omega_0$ is a subset of the paths carrying a (weighted) norm $\|\cdot\|$ that makes $\Omega_0$ a Banach space with $\P_0(\Omega_0)=1$, and $C_1$, $C_2$ are explicit constants 
 for weak disorder. While for a fixed $\gamma\in (0,\gamma_c)$, the constants $C_1$ and $C_2$ do not match (and, given their nature, they should not match, as will be explained below), the 
 bounds given in \eqref{bounds} agree in the limit $\gamma\to 0$ and coincide with the scaling exponents of the Wiener measure $\P_0$ as well. This will also be shown in Theorem \ref{thm-exp-decay}.  
 In Theorem \ref{prop-Lp-moments}, we also prove the existence of positive and negative moments of its total mass in the entire weak disorder regime, showing that for all $\gamma \in (0,\gamma_c)$, 
 there is some $p>1$ such that $\mu_\gamma(\Omega)\in L^p(\mathbf P)$ and also $\mu_\gamma(\Omega) \in L^{-q}$ for some $q>0$. 
 \footnote{Here and through the sequel, we say for $q>0$ that $X\in L^{-q}$ if and only if $X^{-q} \in L^1$}
 Moreover, for $\gamma$ even smaller (in the so-called $L^2$-regime), it holds that $\mu_\gamma(\Omega)$ has negative moments of all order. 
 Let us finally mention that, as in the first part of Theorem \ref{thm-existence}, Theorem \ref{thm-exp-decay} and Theorem \ref{prop-Lp-moments}  
 hold also for continuous directed polymers in random environments with finite exponential moments (i.e., the environment is not required to be Gaussian), 
 while for the second and third parts of Theorem \ref{thm-existence} we need the Gaussianity of the environment. We also refer to Section \ref{sec-proof-sketch} for the main ideas of the proof. 
 
  \smallskip
 
 In order to draw analogies, let us briefly recall Kahane's theory of GMC for log-correlated fields on finite dimensional spaces. Given a domain $D\subset \R^d$, a GMC is a rigorously defined 
 version of the measures 
 $$
 m_{\gamma,h}(\d x)= \exp\big[\gamma h(x) - \frac{\gamma^2}2 \mathbf E[h^2(x)]\big]\d x.
 $$
 Here, $\{h(x)\}_{x\in D}$ is a log-correlated centered Gaussian field with $\mathbf E[h(x) h(y)]=-\log |x-y| + O(1)$. The logarithmic divergence along the diagonal 
 prevents to define $h$ pointwise, and a regularization process becomes necessary to define $h$, and consequently, the measures $m_{\gamma,h}$, in a precise sense. 
 Since the work of Kahane \cite{K85}, there have been very important works in the field by Robert and Vargas \cite{RV10}, Duplantier and Sheffield \cite{DS11}, Shamov \cite{S14} and Berestycki \cite{B17}, 
 who showed that if $(h_\eps)_{\eps\in (0,1)}$ is a suitable 
 approximation of $h$, then as long as $\gamma\in (0,\sqrt{2d})$, $$
\lim_{\eps \to 0}m_{\gamma,h_\eps}(\d x) = m_{\gamma,h}(\d x) \quad\mbox{weakly and in $\mathbf P$-probability.}
$$ 
This is the so-called subcritical regime. In the critical/supercritical phase, (i.e.,$\gamma\geq \sqrt{2}$), the measure cannot be constructed as above, but still one can make sense of it; we refer to the works of  Duplantier-Rhodes-Sheffield-Vargas \cite{DRSV14,DRSV14a}, Powell \cite{P18,P21}, Madaule-Rhodes-Vargas \cite{MRV16} and Biskup-Louidor \cite{BL18} for the theory of critical and supercritical GMC, which is outside the scope of this work.

In the subcritical phase, $m_{\gamma,h}$ is non-trivial, non-atomic and $\mathbf P$-a.s. 
and for a.e. $x\in D$ sampled according to $m_{\gamma,h}$, it holds that $h_\eps(x) \sim \gamma \log\big(\frac 1\eps\big)$
-- that is, almost every point chosen via $\mu_{\gamma,h}$ is $\gamma$-thick. Consequently, $m_{\gamma,h}$ is singular w.r.t. the Lebesgue measure. We underline the analogies of these statements to \eqref{mugamma} and \eqref{muh}. 
There have been notable instances where, using the scale-invariance of the logarithmic correlations, studying the positive and negative moments of the total mass $m_{\gamma,h}(D)$ have been instrumental (see \cite{GHSS18,RV19} and \cite[Ch. 3.7-3.9]{BP21}). A related geometric feature of $m_{\gamma,h}$ is captured by the asymptotic volume decay of $\log m_{\gamma,h}(B_\eps(x))$ as $\eps\downarrow 0$, which is the analogue of our bounds \eqref{bounds}. This is known as
the (uniform) H\"older exponents of $m_{\gamma,h}$ and it is closely related to its 
multifractal behavior. Indeed, recall that  for
$m_{\gamma,h}$, the uniform H\"older exponent is given by $d(1-\frac{\gamma}{\sqrt{2d}})^2$, while its pointwise H\"older exponent is 
$d+ {\gamma^2}/2$ (see \cite[Sec. 4.1]{RV14} for precise definitions). However, none of these exponents fully captures its multifractal spectrum, which roughly says that if a point $x\in D$ is $a$-thick, then 
$$
m_{\gamma,h}(B_\eps(x)) \sim C \eps^{d+ \frac{\gamma^2}2- a\gamma}.
$$
 Thus, the pointwise (resp. uniform) H\"older scaling exponents are the extremal values of the multifractal spectrum, and these therefore do not match for a fixed $\gamma$ (but do so as $\gamma\to 0$). We refer 
to the discussion below \eqref{bounds} again to underline the analogy to the 
H\"older exponents $C_1, C_2$ in our setup. 

\smallskip

 In summary, it is for the first time, to the best of our knowledge, that the existence, characterization, thick points and the above fractal 
 properties of $\mu_\gamma$ in \eqref{mugamma} have been established in the  infinite-dimensional setup and in the entire weak disorder regime.  As mentioned in the above paragraph, for log-correlated Gaussian fields on $\mathbb{R}^d$,  
 critical parameters such as $\gamma_c$ and sharp bounds for the positive moments of $\mu_\gamma(\Omega)$ are well-established. In contrast, in our framework most of these parameters as well as the relevant information are qualitative 
in the entire weak-disorder regime.   
  We underline that our results on the $\gamma$-thick paths, characterization of the measure 
  and the H\"older exponents $C_1,C_2$ provide tractable and quantitative information for weak disorder $\gamma\in(0,\gamma_c)$. Our objective is to leverage the GMC approach 
  to derive quantifiable estimates that, until now, have only been validated in a limited number of integrable models. 
Let us now turn to the precise mathematical layout of the model.





\subsection{The model and notation.}


For a fixed dimension $d \ge 3$, 
 let $\Omega := {C}\left([0,\infty), \R^d \right)$ be the space of continuous functions from $[0, \infty)$ to $\R^d$, endowed with the topology of uniform convergence on compact sets. We equip this space with the Wiener measure denoted by $\P_0$, so a typical path $\omega=(\omega_s)_{s\in [0,\infty)} \in \Omega$ corresponds to a realization of a $\R^d$-valued Brownian motion starting at $0$. Similarly, we denote by $\P_x$ the Wiener measure corresponding to a Brownian motion starting at $x\in \R^d$.
Let $(\mathcal{E}, \F, \bf P)$ be a complete probability space so that $\dot B$ is a {\it space-time white noise} independent of the Wiener measure. More precisely, denote by $\mathcal{S}(\R_+ \times \R^d)$ the space of rapidly decaying Schwartz functions on $\R_+ \times \R^d$. Then $\dot B= {\dot B} (f)_{f \in \mathcal{S}(\R_+ \times \R^d)}$ is a Gaussian process with mean zero and covariance 
\begin{equation}\label{cov}
\begin{aligned}
	\mathbf{E} \big[ {\dot B} (f){\dot B} (g) \big] &= \int_{0}^{\infty}\int_{\R^d} f(s,y)g(s,y)\, \d y \, \d s \\
	&= \langle f,g \rangle_{L^2(\R_+ \times \R^d)}, \quad f,g \in \mathcal S(\R_+ \times \R^d). 
	\end{aligned}
\end{equation}
Here, we use the notation 
$$
	{\dot B} (f) = \int_{0}^{\infty}\int_{\R^d} f(s,y){\dot B} (s,y)\,\d y \, \d s \qquad \mbox{for any}\quad f \in  \mathcal{S}(\R_+ \times \R^d).
$$
 We can extend the integral to $f \in  L^2(\R_+ \times \R^d)$ via approximation. Indeed, if $(f_n)_{n \in \N} \subset  \mathcal{S}(\R_+ \times \R^d)$ approximates $f$ in $ L^2(\R_+ \times \R^d)$ (such a sequence exists since $ \mathcal{S}(\R_+ \times \R^d)$ is dense in $ L^2(\R_+ \times \R^d)$), then the sequence $({\dot B} (f_n))_n$ is Cauchy in $L^2(\bf P)$, since we have
$$
\mathbf{E}\big[ ({\dot B} (f_n) - {\dot B} (f_m))^2 \big] = \mathbf{E}\big[ {\dot B} (f_n - f_m)^2 \big] = \|f_n - f_m \|_{L^2(\R_{+}\times \R^d)}.
$$
Therefore, for every $f \in L^2(\R_{+}\times \R^d)$, we can define the $L^2(\bf P)$-limit 
$$
\int_{0}^{\infty} \int_{ \R^d} f(s,y ){\dot B} (s,y) \, \d y \, \d s := \lim_{n \to \infty} {\dot B} (f_n).
$$
For $f \in L^2(\R_+\times \R^d)$ and $T > 0$, we will write
$$
\int_{0}^T  \int_{ \R^d} f(s,y ){\dot B} (s,y) \, \d y \, \d s := \int_{0}^{\infty} \int_{ \R^d} \1_{([0,T]\times \R^d)}(s,y) f(s,y ){\dot B} (s,y) \, \d y \, \d s.
$$
By construction, for  $f \in L^2(\R_+\times \R^d)$, the random variable $\int_{0}^{\infty} \int_{ \R^d} f(s,y ){\dot B} (s,y) \, \d y \, \d s$ is Gaussian distributed with mean zero and variance $\|f\|_{L^2(\R_+\times \R^d)}$ and for $f,g \in L^2(\R_+\times \R^d)$, the covariance $\mathbf E[\dot B(f) \dot B(g)]$ is also given by \eqref{cov}.  We also define the family of space-time shifts $\{\theta_{t,x}:t>0, x\in \R^d\}$  acting on the white-noise environment (i.e., $\theta_{t,x}(\dot B (s, y))=\dot B(t+s, x+y)$), 
and remind the reader that $\dot{B}$ is stationary under this action.

\smallskip


Next, let $\phi$ be a {\it mollifier} -- that is, a smooth, non-negative, spherically symmetric and compactly supported function $\phi \colon \R^d \mapsto \R$ such that $\int_{\R^d} \phi (x)\d x = 1$. We define the Gaussian field $\left( {H}_T(\omega) \right)_{\omega \in \Omega}$ as the It\^{o} integral
\begin{equation}\label{ito}
{H}_T(\omega):=	\int_{0}^T  \int_{ \R^d}\phi (\omega_s -y){\dot B} (s,y) \, \d y \, \d s, \qquad \omega\in \Omega. 
\end{equation}
In particular, $( {H}_T(\omega))_{\omega \in \Omega}$ is  also a Gaussian process with mean zero and covariance
\begin{equation}\label{cov3}
	\mathbf{E}\left[ {H}_T(\omega) {H}_T(\omega') \right] = \int_{0}^T \int_{ \R^d} \phi \left(\omega_s - y\right)\phi \left(\omega'_s - y\right) \, \d y \, \d s = \int_{0}^T (\phi \star \phi) \left(\omega_s - \omega'_s\right) \d s,
\end{equation}
where $(f\star g)(x) = \int_{ \R^d}f(x-y)g(y)\d y$. Note that, for any $\omega\in \Omega$, 
$\mathrm{Var}[H_T(\omega)]= T (\phi\star \phi)(0)$ -- that is, for $T\gg 1$ large, 
the covariance of the field $(H_T(\omega))_{\omega\in \Omega}$ diverges along the diagonal.

Given any $\gamma >0$ and $T>0$, we define the random measure on the path space $\Omega$ 
\begin{equation}\label{def-mugammaT}
\begin{aligned}
	\mu_{\gamma,T}(\d\omega) &:=\exp\bigg(\gamma {H}_T(\omega) - \frac{\gamma^2}{2} \mathrm{Var}(H_T(\omega)) \bigg)\P_0(\d\omega)
		\\
	&= \exp \bigg( \gamma {H}_T(\omega) - \frac{\gamma^2}{2}T(\phi \star \phi)(0) \bigg)  \P_0(\d\omega),
	\end{aligned}
\end{equation}
and in the sequel, we will refer to $\mu_{\gamma,T}$ as the {\it finite-volume Gaussian multiplicative chaos on $\Omega$} or the {\it continuous directed polymer}. 
Also, we will write 
 \begin{equation}\label{def-mugammaT-normalized}
		\widehat{\mu}_{\gamma,T}(A):=\frac{\mu_{\gamma,T}(A)}{\mu_{\gamma,T}(\Omega)}, \qquad A\subset \Omega
		\end{equation} 
		for its normalized counterpart. Let $\mathcal F_T$ be the $\sigma$-algebra generated by the noise $\dot B$ until time $T$. Since $\dot B$ is smoothened only in space, 
the total mass, or the partition function,
$$
\mu_{\gamma,T}(\Omega)  = \int_{\Omega} \exp \bigg( \gamma {H}_T(\omega) - \frac{\gamma^2}{2}T(\phi \star \phi)(0) \bigg)  \P_0(\d\omega)
$$
is adapted to the filtration $(\mathcal F_T)_T$ and is a martingale. Our main results will hold in the \textit{uniform integrability  phase}  $\gamma \in (0,\gamma_c)$ (also called weak disorder)
of the martingale $(\mu_{\gamma,T}(\Omega))_T$ determined by 
the critical value $\gamma_c \in (0,\infty)$ for $d\geq 3$ defined in \eqref{def-gammac}. 
We are now ready to state our main results. 
 
 \section{Main results.} \label{sec-results}

\subsection{Existence, support and characterization.}\label{sec-results-existence}
For any fixed realization of $\dot B$ and $a>0$, we define
$$
\mathcal T_a:=\bigg\{\omega \in \Omega: \lim_{T\to\infty} \frac {H_T(\omega)}{T (\phi\star \phi)(0)}= a \bigg\}
$$ 
to be the (random) set of {\it $a$-thick paths} $\omega \in \Omega$. Here is our first main result:

\begin{theorem}\label{thm-existence}
	Fix $d\geq 3$ and $\gamma \in (0, \gamma_c)$. Then the following hold: 
	\begin{itemize}
	\item (\underline{Existence})  There exists a non-trivial measure $\mu_{\gamma}$ on $\Omega$ such that $\mathbf P${\rm -a.s.}, $\MT$ converges weakly to $\mu_{\gamma}$ as $T \to \infty$. Moreover, for any fixed Borel set $A\subset \Omega$ with $\P_0(A)>0$, $\mu_\gamma(A)>0$ $\mathbf{P}${\rm -a.s.} Similarly, there exists a probability measure $\widehat{\mu}_\gamma$ on $\Omega$ such that $\mathbf{P}${\rm -a.s.}, the normalized approximations $\widehat{\mu}_{\gamma,T}$ converge weakly  to $\widehat{\mu}_\gamma$. Finally, the same conclusion holds if the noise $\dot B$ is replaced by any other random environment with finite exponential moments.
	
	\medskip
	
	\item (\underline{Identification of thick points and singularity)} Let $\mu_\gamma$ be the limiting measure from above. Then, 
	\begin{equation}\label{null}
	\mu_\gamma\bigg\{\omega\in\Omega: \lim_{T\to\infty} \frac {H_T(\omega)}{T (\phi\star \phi)(0)} \ne \gamma \bigg\}=0 \qquad\mathbf P\mbox{-a.s.}
	\end{equation}
	That is, almost surely, any path $\omega\in \Omega$ sampled according to $\mu_\gamma$ is $\gamma$-thick. Consequently, for $\mathbf P$-a.e. realization of $\dot B$, the normalized probability measure $\widehat\mu_\gamma=\mu_\gamma/\mu_\gamma(\Omega)$ (also defined above) 
	is singular w.r.t. the Wiener measure $\P_0$. In particular, if $\phi$ and $\phi^\prime$ are two mollifiers, then
	 the limiting measures  $\mu_{\gamma,\phi}$ and  $\mu_{\gamma,\phi^\prime}$ are singular unless $(\phi\star\phi)(0)=(\phi^\prime\star\phi^\prime)(0)$.	
	 
	 \medskip
	 
	 \item (\underline{Characterization})  Let $P=\mathbf{P}\circ \dot{B}^{-1}$ be the law of the white noise, and for any measure $\nu$ on $\Omega$, which may depend on $\dot{B}$, we write 
	 \begin{equation}\label{def-Q-nu}
	\Q_\nu(\d\dot{B}\d\omega):=\nu(\d\omega,\dot{B})P(\d\dot{B}).
\end{equation}
Then the (unnormalized) GMC measure $\mu_\gamma= \mu_{\gamma,\phi}$ is the unique measure such that the law of ${\dot B} $ under $\Q_{\mu_{\gamma,\phi}}$ is the same as the law of the (Schwartz) distribution \begin{equation*}
		{{\dot B} }_\phi(f)={\dot B} (f)+\gamma \int_{\R_+\times \R^d}f(s,y)\phi(\omega_s-y)\d s \d y, \qquad f \in \mathcal S(\R_+\times \R^d)
	\end{equation*}
	under $P\otimes \P_0$.\footnote{To emphasize the dependence on the mollification, here we write $\mu_{\gamma,\phi,T}= \mu_{\gamma,T}$ for the approximating measure defined in \eqref{def-mugammaT} and $\mu_{\gamma,\phi}=\lim_{T\to\infty} \mu_{\gamma,\phi,T}$ for its limit, defined in the first part.} In other words, $\mu_{\gamma,\phi}$ is the unique measure satisfying  
	\begin{equation}\label{eq-unique}
	\E^P\bigg[\int_\Omega \mu_{\gamma,\phi}(\d\omega) F(\dot B, \omega)\bigg]= \E^{P\otimes\P_0}\big[F(\dot B_\phi, \omega)\big].
	\end{equation}
	for any bounded measurable function $F: \Omega \times \mathcal E \mapsto \R$ (cf. Remark \ref{rem-uniqueness} below). 
	\end{itemize}
\end{theorem}

\begin{remark}\label{rem-uniqueness}
We comment on the third part of Theorem \ref{thm-existence} which determines to what extent the limiting measure $\mu_\gamma$ depends on the choice of the mollification $\phi$. 
Let us first note that, in the infinite-dimensional setup, the measure $\mu_{\gamma,\phi}$ does depend on the mollification $\phi$ in a strict sense, and this dependence is to be expected -- indeed, 
it will be shown in Proposition \ref{prop-nonunique} that for two mollifiers $\phi$ and $\phi^\prime$, 
even if $(\phi\star\phi)(0)=(\phi^\prime\star\phi^\prime)(0)$, the two measures $\mu_{\gamma,\phi}$ and  $\mu_{\gamma,\phi^\prime}$ are different unless $\phi=\phi^\prime$. 
However, this dependence on the approximation procedure is rather small -- as shown by 
the third statement in Theorem \ref{thm-existence}, 
the only way to perturb {\it linearly} the distribution $\dot B$ with the test function 
$$
(s,y)\mapsto \phi(\omega_s-y)
$$
 is by using the measure $\mu_{\gamma,\phi}$. In particular, 
 $$
 \begin{aligned}
 &\mbox{$\mu_{\gamma,\phi}$ is the unique measure such that the distribution of $H_T$ under $\Q_{\mu_{\gamma,\phi}}$} \\
 &\qquad\mbox{is the same as the distribution of $H_T+T(\phi\star\phi)(0)$ under $P\otimes \P_0$.}
 \end{aligned}
 $$
 Note that this characterization of $\mu_\gamma$ is reminiscent of Shamov's definition of a subcritical GMC \cite{S14}. Under this setup, a GMC over the (generalized) Gaussian field $H$ is a random measure $\mu=\mu_H$ (measurable w.r.t. $H$) on a measure space $(X,\mathcal{G})$ if for all deterministic $v:X\mapsto \R$ such that the law of $H+v$ is absolutely continuous w.r.t. that of $H$, then $\mu_{H+v}(dx)=\e^{v(x)}\mu_{H}(dx)$. The construction of such measure goes via approximating $H$ by a sequence of fields $(H_n)$ such that $H_n\to H$ in a suitable sense. 
A difference to the present infinite dimensional setup is that the limiting field does depend on the mollifier $\phi$, since for $\omega\neq\omega'\in \Omega$, \begin{equation*}
	\lim_{T\to\infty}\mathbf{E}[H_T(\omega)H_T(\omega')]=\int_0^\infty (\phi\star\phi)(\omega_s-\omega'_s)\d s<\infty\qquad \P_0^{\otimes 2}\text{-a.s.}\qed
\end{equation*}
\end{remark}

\subsection{Volume decay and H\"older exponents.}\label{sec-results-decay}
Recall that by \eqref{null}, $\mu_\gamma$ is supported only $\gamma$-thick paths $\omega\in \Omega$. 
Our next goal is to determine the asymptotic decay rate of small balls around each such $\gamma$-thick path, 
that is to determine the behavior of 
\begin{equation}\label{eq0-decay}
\log \widehat\mu_\gamma(\omega\in \Omega\colon \|\omega\|<\eps)\qquad\mbox{as}\,\, \eps\downarrow 0.
\end{equation}
and compare this decay rate with that of $\log \P_0(\omega\in \Omega\colon \|\omega\|<\eps)$ w.r.t. 
the Wiener measure $\P_0$. 

Note that $\Omega=C([0,\infty); \R^d)$ is not a Banach space.\footnote{The metric $d(\omega,\eta)=\sum_n \frac 1 {2^n} \sup_{0\leq t \leq n}[|\omega(t)- \eta(t)| \wedge 1]$ makes $\Omega$ 
a complete separable metric space, compatible with the topology of uniform convergence on compacts. However, there is a no canonical way to choose a norm on $\Omega$ that makes it a separable Banach space.} To define a suitable norm, we will consider a subset $\Omega_0\subset \Omega$ with a {\it weighted} norm $\|\cdot\|_w$ such that $\P_0(\Omega_0)=1$ and that $(\Omega_0, \|\cdot\|_w)$ is a Banach space. For this purpose, we consider a class of maps $w:(0,\infty)\mapsto (0,\infty]$ satisfying
\begin{equation}\label{eq-g-cond}
	\left(\inf_{0<t<\infty}w(t)\right)\wedge \left(\inf_{0<t<\infty}\frac{w(t)}{t}\right)>0.
\end{equation}
For a fixed $w$ as above, we set \begin{equation}\label{eq-Omega0-def}
	\Omega_0=\Omega_0(w):=\left\{\omega\in \Omega: \lim_{t\to\infty}\frac{|\omega_t|}{w(t)}=0\right\}.
\end{equation}
Since $\P_0\left(\lim_{t\to\infty}\frac{|\omega_t|}{t}=0\right)=1$, condition \eqref{eq-g-cond} assures that $\P_0(\Omega_0)=1$. Moreover, under the norm $\|\cdot\|_w$, defined as \begin{equation}\label{eq-weighted-norm}
	\|\omega\|_w:=\sup_{t>0}\frac{|\omega_t|}{w(t)}\qquad (\text{finite for all } \omega\in \Omega_0\text{ by } \eqref{eq-g-cond}),
\end{equation}
$\Omega_0$ is a separable Banach space. From now on, we consider an arbitrary $w$ satisfying \eqref{eq-g-cond}.

The next theorem consists of two parts. In the first one, we will determine the decay exponents (uniform upper and pointwise lower estimates) for \eqref{eq0-decay} valid in the whole weak disorder region $\gamma \in (0,\gamma_c)$. In the second one, we will 
show that, as $\gamma\to 0$, the upper and lower estimates coincide.
\begin{theorem}[Volume decay]\label{thm-exp-decay}
Fix $d\geq 3$. 
\begin{itemize}
	\item Given $\gamma\in(0,\gamma_c)$ and $w$ satisfying \eqref{eq-g-cond}, there exists $r_0>0$ such that for all $r\in (0,r_0)$, there are explicit constants $0<C_1\leq C_2<\infty$ (defined in \eqref{eq-C1-def} and \eqref{eq-C2-def}) fulfilling  \begin{equation}\label{eq-exp-dec1}
	\begin{aligned}
		-C_2 & \leq \liminf_{\eps\to 0}\eps^2\log \widehat{\mu}_\gamma(\|\omega\|_w<r\eps) \\
		&\leq \limsup_{\eps\to 0}\eps^2\sup_{\eta\in \Omega_0}\log \widehat{\mu}_\gamma(\|\omega-\eta\|_w<r \eps )\leq -C_1.
		\end{aligned}
		\end{equation}
\item Let $p_0>1$ and $q_0>0$. Then if $\gamma$ is small enough, the constants $C_1,C_2$ can be chosen as \begin{equation}\label{eq-exp-dec2}
		\begin{aligned}
			C_1(\gamma,d,r)&:=\frac{p_0-1}{p_0 }\bigg(\frac{j^2_{\frac{d-2}{2}}}{2r^2}\int_{0}^\infty w^{-2}(t)\d t\bigg)-\frac{1}{2p_0}\frac{\gamma^2}{2}(\phi\star\phi)(0),\\
			C_2(\gamma,d,r)&:=\bigg(\frac{q_0+1}{q_0}\bigg)\bigg(\frac{j^2_{\frac{d-2}{2}}}{2r^2}\int_{0}^\infty w^{-2}(t)\d t\bigg)+\frac{\gamma^2}{2}(\phi\star\phi)(0),
		\end{aligned}
	\end{equation}
	where $j_{\frac{d-2}{2}}$ is the smallest positive root of the Bessel function $J_{\frac{d-2}{2}}$.
		In particular, for any $r>0$, \begin{equation}\label{eq:vol-dec-gamma-0}
	\begin{aligned}
			\lim_{\gamma\to 0}C_1(\gamma,d,r)=\lim_{\gamma\to 0}C_2(\gamma,d,r)&=\frac{j^2_{\frac{d-2}{2}}}{2r^2}\int_{0}^\infty w^{-2}(t)\d t \\
			&=\lim_{\eps\to 0}\eps^2\log \P_0(\|\omega\|_w<r\eps).
			\end{aligned}
		\end{equation}
In other words, both exponents $C_1$ and $C_2$ converge as $\gamma\to 0$ to the volume decay exponent for the Wiener measure.
		\end{itemize}
\end{theorem}


	\begin{remark}
	We note that the upper bound in \eqref{eq-exp-dec1} is uniform over shifted balls of a given radius, while the lower bound holds pointwise. 
In fact, we do not expect a uniform lower bound over $\eta\in \Omega_0$ in Theorem \ref{thm-exp-decay}. This is reflected already in the behavior of Wiener measure $\P_0$ (corresponding to the case $\gamma=0$): for every function in the Cameron-Martin space, defined as the Hilbert space \begin{equation*}
		H^1:=\big\{\eta\in \Omega_0: \dot{\eta}\in L^2([0,\infty);\R^d)\big\}
	\end{equation*}
	with inner product induced by the norm $\|\eta\|_{H^1}:=\|\dot{\eta}\|_{L^2([0,\infty);\R^d)}$, we have (see  \eqref{eq-CM} in Lemma \ref{thm-Gaussian})
		\begin{equation*}
		\P_0(\|\omega-\eta\|<r)\geq \P_0(\|\omega\|<r)\e^{-\frac{\|\eta\|^2_{H^1}}{2}}.
	\end{equation*}
	That is, we obtain a pointwise lower bound w.r.t. $\P_0$. 
\end{remark}

\begin{remark}
	We also remark that instead of considering weighted norms on $\Omega_0$, it is also possible to obtain analogous estimates as in Theorem \ref{thm-exp-decay} on $\Omega$ by considering 
	the sets $\{\omega\in \Omega: \sup_{0\leq t\leq r}|\omega_t|<\eps \}$ (i.e., by considering balls in the uniform metric in $[0,r]$) as $\eps\to 0$. 
	\end{remark}

\subsection{Moments of $\mu_\gamma(\Omega)$ in weak disorder.}
Finally we turn to the positive and negative moments of the total mass $\mu_\gamma(\Omega)$. By the martingale convergence theorem, we always have $\mathbf E[\mu_\gamma(\Omega)]=1$ for all $\gamma \in (0,\gamma_c)$. Moreover, we also have 
 \begin{theorem}[Positive and negative moment]\label{prop-Lp-moments}
 Fix $d\geq 3$ and $\gamma\in (0,\gamma_c)$. Then there is some $p>1$ and $q>0$ such that 
	\begin{align}
	&\mu_\gamma(\Omega)\in L^p(\mathbf P), \qquad\mbox{and} \label{eq-pos}\\ 
	&\mu_\gamma(\Omega)\in L^{-q}(\mathbf P) \label{eq-neg}. 
	\end{align}
	Moreover, if $\gamma\in (0,\gamma_c)$ is chosen so that the martingale $(\mu_{\gamma,T}(\Omega))_T$ is bounded in $L^2(\mathbf P)$, then 
	\begin{equation}\label{eq-neg-all}
	\mu_\gamma(\Omega)\in L^{-q}(\mathbf P) \qquad\forall q\in (0,\infty).
	\end{equation}	
	\end{theorem}
	
\begin{remark}
We can show \eqref{eq-pos} and \eqref{eq-neg} exactly in the same manner if $\dot B$ is replaced by a random environment with finite exponential moments.  
In \cite{CCM20}, it was shown (using Talagrand's concentration inequality and for Gaussian random environment) 
that in the ``$L^2$ region" (i.e., when $\gamma \in (0,\gamma_c)$ is restricted so that the martingale $(\mu_{\gamma,T}(\Omega))_T$ remains $L^2(\mathbf P)$ bounded) 
and for all $q\in (-\infty,0)$, $(\mu_{\gamma,T}(\Omega))_T$ is $L^q(\mathbf P)$ bounded. It remains an open problem to extend \eqref{eq-neg-all} in the full weak disorder regime $\gamma \in (0,\gamma_c)$. 
For negative moments of the total mass (or more generally, the tail distribution of the inverse of the partition function) of finite dimensional log-correlated GMC 
we refer to \cite{RV10, GHSS18}. 
	
\end{remark}

\subsection{Ideas of the proofs and comparison with literature.}\label{sec-proof-sketch}
As a guidance to the reader, we will now outline the key technical constituents of the proofs of the above results. In this description, we will also 
underline similarities and differences from existing techniques in the literature.

\noindent{\it Step 1 (Weak convergence):} For the existence part of the proof of Theorem \ref{thm-existence}, we have drawn inspiration from the simple and very elegant approach of Berestycki \cite{B17} for the construction of GMC for log-correlated fields on finite dimensional spaces. 
This approach goes via employing a second moment method to prove convergence in the $L^2$ region, then using Girsanov's formula to determine thick points and showing that every Liouville point $x\in D$ is $\gamma$-thick,  
and finally covering the entire uniform integrability phase by removing points which are thicker than this prescribed value. 
In the current setup, we approach the uniform integrability phase directly by
using martingale properties. The starting point is that, for any Borel set $A\subset \Omega$, $(\mu_{\gamma,T}(A))_{T\geq 0}$ is a martingale w.r.t. the white noise filtration (see Lemma \ref{mart} and Lemma \ref{lemma-exchange-cond-probs}).
This guarantees convergence of $\mu_{\gamma,T}(A)$ $\mathbf P$-a.s. and in $L^1(\mathbf P)$ to {\it some random variable} $\mu_\gamma(A)$ in the uniform integrability phase 
(the same statement holds for the normalized version $\widehat\mu_{\gamma,T}(A)\to \mu_\gamma(A)/\mu_\gamma(\Omega)$). However, it is not clear that the limiting random variable defines a probability measure, so we cannot derive weak convergence just from this fact.  Thus, we first prove that the sequence $(\widehat\mu_{\gamma,T})_{T\geq 0}$ is tight, in order to obtain a weakly convergent subsequence. For this purpose, 
first it is necessary to find a countable collection $\mathcal X$ of Borel sets in $\Omega$ where the almost sure convergence $\widehat\mu_{\gamma,T}(A)\to \mu_\gamma(A)/\mu_\gamma(\Omega)$
takes places simultaneously for {\it all} $A\in \mathcal X$. Using this construction and a continuity property of the limiting random variables $\mu_\gamma(\cdot)$, we then show that $\mathbf P$-a.s., the family 
$\{\widehat\mu_{\gamma,T}\}_T$ is uniformly tight, and moreover, the weak limit of $\widehat\mu_{\gamma,T}$ is uniquely determined. We refer to 
Proposition \ref{prop-weak-conv} for details. It might be worth pointing out that this approach is robust in the sense that it does not rely on a $L^2$ computation, and it does not need Gaussianity of the environment. 
These arguments constitute Section \ref{sec-preliminaries}-Section \ref{sec-weak-converge}. See also Remark \ref{remark-CometsYoshida} below for a comparison with existing literature. 


\medskip\noindent{\it Step 2 (Rooted random measures):} In the second step for the proof of Theorem \ref{thm-existence}, we work with the measures 
$$
\begin{aligned}
&\Q_{\mu_{\gamma,T}}(\d\omega \d\dot B)= \exp\bigg[\gamma \int_0^T\int_{\R^d} \phi(\omega_s-y) \dot B(s,y) \d s \d y - \frac{\gamma^2 T(\phi\star\phi)(0)}2\bigg] \P_0(\d\omega) P(\d\dot B), \\
&\Q_{\mu_\gamma}(\d\omega \d\dot B)=\mu_\gamma(\d\omega, \dot B) \P_0(\d\omega).
\end{aligned}
$$ 
Using Theorem \ref{thm-existence}, we show that $\Q_{\mu_{\gamma,T}}(\d\omega \d\dot B)$ converges weakly towards $\Q_{\mu_\gamma}(\d\omega \d\dot B)$, and apply Girsanov's theorem to determine, under the conditional measure $\Q_{\mu_\gamma}(\d\omega|\dot B)$, the mean and the variance the Gaussian process $\{\dot B(f)\}_{f\in \mathcal S(\R_+\times \R^d)}$. Combining these two facts, it can be shown that the law of $\dot B$ under $\Q_{\mu_\gamma}$ is the same as the law of $\dot B(f)+ \int_{\R_+\times \R^d} f(s,y) \phi(\omega_s- y) \d s \d y$ under $\P_0\otimes P$. Combined with a law of large numbers of the martingale 
$$
\lim_{T\to\infty} \frac{H_T}T = 0 \qquad\mbox{almost surely w.r.t. $\mathbf P$}, 
$$
it can then be shown that 
$$
\lim_{T\to\infty} \frac{H_T}T = \gamma (\phi\star \phi)(0)>0, \qquad\mbox{almost surely w.r.t. $\Q_{\mu_\gamma}$.}
$$
These arguments constitute the second and third parts of the proof of Theorem \ref{thm-existence} in Section \ref{sec-proof-uniqueness}.

\medskip\noindent{\it Step 3 (Volume estimates):} In this step we will show decay rate of $\widehat\mu_\gamma(\|\omega - \eta \|_w < \eps)$ of microscopic balls in the weighted norm $\|\cdot\|_w$ for any $\gamma \in (0,\gamma_c)$. 
Note that because of uniform integrability, the total mass $\log(\mu_\gamma(\Omega))>-\infty$ a.s., so it suffices to estimate the unnormalized volume $\mu_\gamma(\|\omega - \eta \|_w < \eps)$.
For the upper bound, we approximate this quantity by $\log \mu_{\gamma, S+ \eps^{-2}}(\|\omega - \eta \|_{w} < \eps)$ for any fixed $\eps>0$ and large $S\gg 1$, 
and use Markov property and H\"older's inequality to split the latter term into three contributions, namely, the uniform small ball probabilities $\sup_\eta \log \P_0[\|\omega - \eta \|_{w,\eps} < r \eps]$ (under a truncated weighted norm $\|\cdot\|_{w,\eps}$), 
the uniform space-time translations  
\begin{equation}\label{LHS}
\E_0\big[\sup_{S>0} \, \big(\mu_{\gamma,S}(\Omega)^p \circ \theta_{\eps^{-2}, \omega_{\eps^{-2}}}\big)\big]\qquad\mbox{ for some fixed $p>1$ and any $\gamma\in (0,\gamma_c)$}
\end{equation} 
and then the contribution (up to time $\eps^{-2}$)  from $\mu_{\varphi(\gamma), \eps^{-2}}(\Omega)$ 
for a disorder $\varphi(\gamma)$ depending on the exponents coming from H\"older's inequality. The first term $\sup_\eta \log \P_0[\|\omega - \eta \|_{w,\eps} < r \eps]$
is handled by log-concavity of Gaussian measures and an asymptotic small ball probability in the weighted norm; this provides a rate involving Bessel functions. To handle the second term, 
we crucially use the space-time stationarity of the white noise $\dot B$ -- this fact, together with Doob's inequality and the $L^p(\mathbf P)$ moment \eqref{eq-pos} for any $\gamma \in (0,\gamma_c)$, 
dictate that the ergodic theorem for stationary processes is applicable, implying that the LHS of \eqref{LHS} is dominated by $C(\dot B) \eps^{-2}$ almost surely w.r.t. $\mathbf P$, thereby removing  
the influence of this term on a logarithmic scale. 

For the third term, the asymptotic behavior of $\eps^2 \log \mu_{\gamma,\eps^{-2}}(\Omega)$, for any $\gamma>0$, is provided by a tractable 
variational formula over the translation-invariant compactification $\X$ of probability measures introduced in \cite{MV14} -- briefly, this space consists of elements $\xi=\{\widetilde\alpha_i\}_i$ of orbits $\widetilde\alpha_i$ of sub-probability measures $\alpha_i$ 
with $\sum_i \alpha_i(\R^d)\leq 1$, see Section \ref{sec-free}. For our purposes, it might be worthwhile 
explaining the structure of this formula. First, we can decompose $\log \mu_{\gamma,\eps^{-2}}(\Omega)$ in terms of a martingale and an additive (and shift-invariant) energy functional of the probability distributions $\widehat\mu_{\gamma,\eps^{-2}}[\omega_{\eps^{-2}}\in \cdot]$, but embedded into the aforementioned compactification $\X$, and these distributions also define a dynamics defined by transition probabilities. This dynamics was introduced in \cite{BC20}  as a fixed point approach related to the cavity method from spin glasses for discrete time random walks on $\Z^d$ over the aforementioned compactified space (this used the countability of $\Z^d$ by employing a different, but equivalent, metric to the one in \cite{MV14}). In the present context, we will follow the approach developed in \cite{BM19} which directly used the precise metric structure from \cite{MV14} in the continuum showing that the above Markovian transition kernel is continuous w.r.t. the metric on $\X$, implying existence of invariant measures (over probability measures $\Mcal_1(\X)$ on $\X$).  The key aspect here hinges upon the notion of total disintegration of mass as well as a decoupling phenomenon of two independent GMC chains at large distances, both traits being 
compatible with the topology of $\X$. Then the aforementioned representation of  $\eps^{2} \log \mu_{\gamma, \eps^{-2}}(\Omega)$ and the continuity of the above map provide, for any temperature $\gamma>0$, a tractable quenched variational formula 
$\eps^{2} \log \mu_{\gamma, \eps^{-2}}(\Omega)= - \sup_{\vartheta} \int_{\X} \Phi_{\gamma}(\xi) \vartheta(\d\xi)$ almost surely w.r.t. $\mathbf P$. Here 
$$
 \Phi_{\gamma}(\xi)= \frac{\gamma^2}2 \sum_i \int_{\R^{2d}} (\phi\star \phi)(x-y) \alpha_i(\d x) \alpha_i(\d y) \qquad \xi=(\widetilde\alpha_i)_i \in \X.
$$ 
The supremum in the variational formula is taken (and attained) over the invariant measures of the dynamic in $\Mcal_1(\X)$. The maximizer(s) depend on the temperature $\gamma$ and 
undergoes a qualitative change as $\gamma$ varies, see Section \ref{sec-free} for details. Combining these estimates will provide the required upper bound in Theorem \ref{thm-exp-decay}. 
 Note that since  the $L^p$ bound in \eqref{eq-pos} holds for some (possibly implicit) $p>1$, the 
subsequent application of the H\"older's inequality is valid for this (pre-determined) $p>1$. In this vein, the structure of the formula (in terms of $\gamma$ and $p$) is instrumental 
in providing a non-trivial uniform upper bound $\exp(-C_1/\eps^2)$ with an explicit $C_1>0$, for any $\gamma\in(0,\gamma_c)$ and $r>0$ sufficiently small. We refer to Proposition \ref{prop-ub} for details.

We also remark that an orthogonal approach for handling asymptotic behavior of the logarithmic partition function goes via exploiting its sub-additivity \cite{CY06,CC13}, 
which, unlike the present approach, does not immediately yield an explicit limiting object.

Now, for the proof of the lower bound in Theorem \ref{thm-exp-decay} we will apply {\it reverse} H\"older's inequality to split the 
probabilities also into three factors similar to the one above. The applicative of this inequality then requires a {\it negative} moment from \eqref{eq-neg}, and then the space-time translation invariance of the white noise allows the use of 
the ergodic theorem again. However, now a concentration bound coming from stochastic calculus allows us to replace the quenched probabilities $\eps^{2} \log \mu_{\gamma, \eps^{-2}}(\|\omega\|_{w,\eps}< r\eps)$
by its annealed counterpart $\eps^{2} \mathbf E[\log \mu_{\gamma, \eps^{-2}}(\|\omega\|_{w,\eps}< r\eps)]$, 
and subsequently an application of Jensen's inequality removes any contribution coming from this term. 
This leads to a lower bound $\exp(-C_2/\eps^2)$ for any $\gamma\in (0,\gamma_c)$ with $C_2 <\infty$, see Proposition \ref{prop-lb}. Similar to $C_1$, the lower bound constant $C_2$ is also explicit, but unlike the upper bound, 
$C_2$ carries only two contributions coming from the normalization constant and from the exit time estimate for small ball probability under the Wiener measure. 
As remarked earlier, the upper and lower bounds $C_1$ and $C_2$ differ for a fixed $\gamma\in (0,\gamma_c)$, but as $\gamma\to 0$, these exponents coincide with an explicit constant involving the Bessel function (the scaling exponent of the Wiener measure $\P_0$), see Corollary \ref{prop-constants}. 

Finally, let us mention that  it is shown in \cite{BBM22} that, for any $\gamma>0$ and $d\geq 1$, the finite-volume measure $\widehat\mu_{\gamma,T}$ decays exponentially on small balls (in the uniform metric on paths in $[0,T]$). 
Even though the infinite-volume limit of $\widehat\mu_{\gamma,T}$ exists  for $\gamma\in (0,\gamma_c)$, it is not clear how a similar property for $\eps^2 \log \widehat\mu_\gamma(\|\omega\| < \eps)$ can be deduced from the statements there directly.

\medskip\noindent{\it Step 4 (Moments for $\gamma\in(0,\gamma_c)$):} It remains to show the estimates in Theorem \ref{prop-Lp-moments}. For this purpose, 
we will adapt an argument which was developed recently in \cite{J22} 
to show that \eqref{eq-pos} and \eqref{eq-neg} hold for discrete polymers in a {\it bounded} environment in weak disorder. This argument involved exploiting properties of a non-negative martingale
$(M_n,\mathcal F_n)_n$ satisfying three assumptions: (i) First, for any $k,\ell \in \N$, $\mathbf E[f(M_{k+\ell}/M_k)| \mathcal F_k]\leq \mathbf E[f(M_\ell)]$ for any convex function $f$ (as mentioned there, 
this condition is originally known to be satisfied by well-known branching processes, e.g. Galton-Watson processes); (ii) If $\mathbf P [\lim_n M_n>0]>0$ and condition (i) holds, then $\mathbf E[M_\infty^\star]< \infty$, 
where $M_n^\star=\sup_{0\leq k \leq n} M_k$ is the running maximum; (iii) If $\mathbf P [\lim_n M_n>0]>0$ and condition (i) holds, and moreover if for some $K<\infty$, we have $M_{n+1}/M_n \leq K$ for all $n$, then 
$\sup_n \|M_n\|_p < \infty$ for some $p>1$. These three properties are satisfied by the partition function of the discrete directed polymer in a bounded environment. For the continuous directed polymer (in an unbounded environment)\footnote{As remarked before,
we do not need the Gaussianity of the environment here -- only finite exponential moment of the noise is sufficient.}
we will modify the first step and show that the martingale $(\mu_{\gamma,T}(\Omega))_T$, satisfies, for a suitable passage time $\tau=\tau_u:= \inf\{t: \mu_{\gamma,t}(\Omega)=u\}$ and for any $\gamma>0$, $\mathbf E\big[f\big(\mu_{\gamma,T}(\Omega)/\mu_{\gamma,\tau}(\Omega)\big); \tau \leq T\big] \leq \mathbf P[\tau \leq T] \mathbf E[f(\mu_{\gamma,T}(\Omega)]$ (see Lemma \ref{lemma-convex-fun-bound}). This in turn will imply that for $\gamma \in (0,\gamma_c)$, we have $\mathbf E[M_\infty] < \infty$, where $M_\infty
=\lim_{T\to\infty} \, \sup_{0\leq S \leq T} \mu_{\gamma,S}(\Omega)$ (see Lemma \ref{lemma-running-max-L1}). These two modifications will subsequently suffice to show \eqref{eq-pos} for some $p>1$, without requiring the third assumption above. 

\begin{remark}\label{remark-CometsYoshida}
To compare our Step 1 (sketched above) with existing literature, let us mention that, for discrete (in time and in space $\Z^d$) directed polymers, Comets and Yoshida \cite[Sec. 4]{CY06} 
observed that the normalized polymer measure $\mu_n$ at time $n$ defines a {\it time-inhomogeneous} Markov chain with transition probabilities
\begin{equation}\label{mun}
	\mu_n(\omega_{i+1}=y| \omega_i=x) = \begin{cases}
		\frac{\e^{\beta \omega(i+1,y)}}{W_{n-i} \circ \theta_{i,x}} (W_{n-i-1} \circ \theta_{i+1,y}) P[\omega_1=y | \omega_0=x] &, 0\leq i<n,\\
		P[ \omega_1=y | \omega_0=x]&, i\geq n.
	\end{cases}
\end{equation} 
 Here, $P$ is the law of a simple random walk on $\Z^d$, and $W_n$ the partition function that also converges a.s to $W_\infty>0$. 
 Thus, by using the existence of the limit $\lim_{n\to\infty}\mu_n(A)$ for $A\in \sigma\left(\bigcup_n \mathcal{F}_n\right)$, 
 it can be easily shown that there is a limiting measure $\mu$ on $\sigma$-algebra generated by finite paths, 
and this limiting measure is a Markov chain with \begin{equation*}
	\mu(\omega_{i+1}=y| \omega_i=x)=\frac{\e^{\beta \omega(i+1,y)}}{W_{\infty} \circ \theta_{i,x}} (W_{\infty} \circ \theta_{i+1,y}) P[\omega_1=y | \omega_0=x]. 
\end{equation*} 
Note that, $\mu_n$ defined in \eqref{mun} is time-inhomogeneous and also {\it inconsistent}, as the transition probabilities  depend on the time horizon $n$. Unless $\beta=0$, there exists no Markov chain on the set of paths with {\it infinite length} such that, for any $n\geq 1$, the marginal on the set of paths of length $n$ is $\mu_n$, see the discussion in Comets \cite[p.15, Sec. 2.1.2]{Comets16}. 
We note that in the current setup,  we are working from the beginning on the Borel $\sigma$-algebra of 
$\Omega= C(([0,\infty);\R^d)$ of paths of {\it infinite length} and the topology of uniform convergence on compact sets. This is different to the one generated by $\bigcup_T \mathcal{F}_T$, where $\mathcal{F}_T$ is the  Borel $\sigma$-algebra generated by continuous paths up to time $T$ under the supremum norm on $[0,T]$. Also, even if we were to adapt the Markovian approach from discrete polymers in the current continuum 
framework (and chose to work with $\sigma$-algebras on finite-time horizons), it would still require
showing tightness in the continuum set up (which is not needed in the discrete setup, as mentioned). Therefore, in the current framework, we followed an alternate (and perhaps a direct) route of showing weak convergence 
of the measures $\mu_{\gamma,T}$ in the space $\Omega= C(([0,\infty);\R^d)$, as described above in Step 1. \qed 
\end{remark}

\noindent{\bf Organization of the article:} Sections \ref{sec-preliminaries}-\ref{sec-proof-uniqueness} constitute the proof of Theorem \ref{thm-existence}; here Section \ref{sec-preliminaries}- Section \ref{sec-weak-converge} are 
devoted to the existence of the measure, while 
Section  \ref{sec-proof-uniqueness} will provide the identification of thick points and characterization of the measure. Proofs of Theorem \ref{thm-exp-decay} and Theorem \ref{prop-Lp-moments} can be found Section \ref{sec-proof-thm-decay} and Section \ref{sec-proof-thm-moments}, respectively.


\section{Proof of Theorem \ref{thm-existence}.} \label{sec-proof-thm-existence}

\subsection{Martingale arguments.}\label{sec-preliminaries}


\begin{lemma}\label{mart}
	Let $(\F_T)$ be the $\sigma$-algebra generated by the noise up to time $T$. Then the following statements hold with respect to the filtration $(\F_T)_{T>0}$:
	\begin{itemize}
		\item[(a)] For every $\omega \in \Omega$, the process $H(\omega)= ({H}_T(\omega))_{T>0}$ is a martingale. Its quadratic variation is given by
		$$
		\langle H(\omega)\rangle_T = \mathbf{E}[{H}_T(\omega)^2] = \int_{0}^{T} (\phi \star \phi)(0)\d s = T(\phi \star \phi )(0).
		$$
		\item[(b)] For every $\omega \in \Omega$ and $\gamma >0$, the process $\left( \exp\left\{ \gamma {H}_T(\omega) - \frac{\gamma^2}{2}T (\phi \star \phi)(0) \right\} \right)_{T>0}$ is a martingale.
		\item[(c)] For every Borel set $A \subset \Omega$ and $\gamma >0$, 
		$$
		\big\{\mu_{\gamma,T}(A)\big\}_{T>0}= \bigg\{ \int_A \exp \left\{  \gamma {H}_T(\omega) - \frac{\gamma^2}{2}T (\phi \star \phi)(0) \right\} \d \P_0(\omega) \bigg\}_{T>0}
		$$
		is a martingale.
	\end{itemize}
\end{lemma}
\begin{proof}
Parts (a) and (b) are well-known. The statement in (c) follows by the lemma below together with part (b).
\end{proof} 

\begin{lemma}\label{lemma-exchange-cond-probs}
	Let $X \in L^1(\mathbf{P} \otimes \P_0)$ and $\mathcal{G}\subset \F$ be a $\sigma$-algebra. Then
	\begin{equation}\label{eq}
		\mathbf{E}\bigg[ \, \int_{\Omega} X(\omega, \cdot) \d\P_0(\omega) \, \bigg | \, \mathcal{G}  \, \bigg] = \int_{\Omega} \mathbf{E}\big[ X(\omega, \cdot) \, | \, \mathcal{G} \big] \d \P_0(\omega).
	\end{equation}

\end{lemma}
\begin{proof}
Let $$
\mathcal{H} {:=} \bigg\{X \in L^1(\mathbf{P} \otimes \P_0) \, : \, \text{equation (\ref{eq}) is satisfied for any $\sigma$-algebra $\mathcal{G}\subset \F$ }  \, \bigg\}.
$$ 
\noindent{\bf Step 1:} We will first show that $\mathcal{H}$ is a monotone class. Indeed, let $X,Y \in \mathcal{H}$, and  $a>0$. Then 
	\begin{flalign*}
		&\mathbf{E}\left[ \ \int_{\Omega}\alpha X(\omega, \cdot) + Y(\omega, \cdot) \d\P_0(\omega) \ \middle| \ \mathcal{G}  \ \right] \\ 
		=&\alpha \mathbf{E}\left[ \, \int_{\Omega} X(\omega, \cdot) \d\P_0(\omega) \, | \, \mathcal{G}  \, \right] + \mathbf{E}\left[ \, \int_{\Omega} Y(\omega, \cdot) \d\P_0(\omega) \, | \, \mathcal{G}  \, \right] \\
		=& \alpha  \int_{\Omega} \mathbf{E}\left[ X(\omega, \cdot) \, | \, \mathcal{G} \right] \d \P_0(\omega) +  \int_{\Omega} \mathbf{E}\left[ Y(\omega, \cdot) \, | \, \mathcal{G} \right] \d \P_0(\omega) \\
		=&  \int_{\Omega} \mathbf{E}\left[ \alpha X(\omega, \cdot) + Y(\omega, \cdot) \, | \, \mathcal{G} \right] \d \P_0(\omega),
	\end{flalign*}
	hence $\alpha X + Y \in \mathcal{H}$.
	
	\medskip
	
	Now let $X_1, X_2, \ldots \in \mathcal{H}$ be monotonically increasing, i.e. for every $n\in \N$ and every $\omega \in \Omega$, we have $X_n(\omega, \cdot) \le X_{n+1}(\omega, \cdot)$. Then for every $\zeta \in \mathcal{E}$, by monotone convergence
	
	$$ \int_{\Omega} \lim_{n \to \infty} X_n(\omega, \zeta) \d\P_0(\omega) 
	= \lim_{n \to \infty} \int_{\Omega}  X_n(\omega, \zeta) \d\P_0(\omega).  
	$$
	On the other hand, for every $\omega \in \Omega$, by monotone convergence of the conditional expectation, we have 
	$$
	\mathbf{E}\left[ \, \lim_{n \to \infty} X_n(\omega, \cdot) \, \middle| \, \mathcal{G}  \, \right] = \lim_{n \to \infty} \mathbf{E} \left[ \, X_n(\omega, \cdot) \, \middle| \, \mathcal{G} \, \right].
	$$
	These two observations and further usage of monotone convergence plus the fact that $X_n \in \mathcal{H}$ for all $n \in \N$ yield
	\begin{flalign*}
		\mathbf{E} \left[ \, \int_{\Omega} \lim_{n \to \infty} X_n(\omega, \cdot) \d\P_0(\omega) \, \middle| \, \mathcal{G}  \, \right] &= \lim_{n \to \infty}\mathbf{E} \left[ \, \int_{\Omega} X_n(\omega, \cdot) \d\P_0(\omega) \ \middle| \, \mathcal{G}  \, \right] \\
		&= \lim_{n \to \infty} \int_{\Omega} \mathbf{E} \left[ \,  X_n(\omega, \cdot) \, \middle| \, \mathcal{G} \, \right] \d\P_0(\omega) \\
		&=  \int_{\Omega} \mathbf{E} \left[ \,  \lim_{n \to \infty} X_n(\omega, \cdot) \, \middle| \, \mathcal{G} \, \right] \d\P_0(\omega).
	\end{flalign*}
Hence, $\mathcal{H}$ is a monotone class. 

\medskip

\noindent{\bf Step 2:} Next, we will show that $\mathcal{H}$ contains all indicator functions of the form $\1_{A \times B}$ for $A \in \mathcal{B}$ and $B \in \mathcal{F}$. To that end, fix such sets $A$ and $B$. Then
$$
	\mathbf{E}\left[ \, \int_{\Omega} {\1}_{A \times B}(\omega, \cdot) \d\P_0(\omega) \, | \, \mathcal{G}  \, \right] = \mathbf{E}\left[ \, \P_0(A){\1}_B(\cdot) \, | \, \mathcal{G}  \,  \right] = \P_0(A)\mathbf{P}(B\, | \, \mathcal{G}).
$$
Similarly and by using independence, we obtain
$$
 \int_{\Omega} \mathbf{E}\left[\, {\1}_{A \times B}(\omega, \cdot) \, | \, \mathcal{G} \, \right] \d \P_0(\omega) =  \int_{\Omega} {\1}_A(\omega) \mathbf{E}\left[ \, {\1}_B (\cdot) \, | \, \mathcal{G} \, \right] \d \P_0(\omega) \P_0(A)\mathbf{P}(B \, | \, \mathcal{G}).
$$
Therefore, ${\1}_{A \times B} \in \mathcal{H}$. 

\medskip

\noindent{\bf Step 3:} The same conclusion as Step 2 is true for the indicator function of $(A \times B)^c= (A^c \times B^c) \dot{\cup} (A^c \times B) \dot{\cup} (A \times B^c)$ (here, $\dot{\cup}$ means that the union is disjoint). By the monotone class argument, it follows that $\mathcal{H}$ contains all positive measurable functions and by linearity, also all $L^1(\mathbf{P}\otimes\P_0)$-functions.
\end{proof} 

\subsection{Uniform integrability.}
\begin{lemma}\label{lemma-unif-int-mono}
	If for some $\gamma>0$, $(\mu_{\gamma,T}(\Omega))_{T>0}$ is uniformly integrable, then the same holds for all $\gamma'<\gamma$.
\end{lemma}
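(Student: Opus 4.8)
The plan is to realize the total mass at the smaller coupling, $\mu_{\gamma',T}(\Omega)$, as a conditional expectation of an independent statistical copy of the total mass at $\gamma$, and then to transfer uniform integrability across that conditional expectation by a de la Vallée-Poussin argument. Throughout write $M_T(\gamma):=\mu_{\gamma,T}(\Omega)$ and $\alpha:=\gamma'/\gamma\in(0,1)$, and recall that uniform integrability of a family depends only on the individual laws of its members.

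First I would enlarge the probability space by an independent space--time white noise $\dot C$ with the same law as $\dot B$, and set $\dot B^\alpha:=\alpha\,\dot B+\sqrt{1-\alpha^2}\,\dot C$. Since $\dot B$ and $\dot C$ are independent, $\dot B^\alpha$ is again a white noise with covariance \eqref{cov}, so $\dot B^\alpha\overset{d}{=}\dot B$. Writing $H_T^\alpha$ and $\widehat H_T$ for the fields \eqref{ito} built from $\dot B^\alpha$ and $\dot C$ respectively, one has $H_T^\alpha=\alpha H_T+\sqrt{1-\alpha^2}\,\widehat H_T$, and the process $M_T^\alpha:=\int_\Omega \exp\!\big(\gamma H_T^\alpha(\omega)-\tfrac{\gamma^2}2 T(\phi\star\phi)(0)\big)\,\P_0(\d\omega)$ has, as a process in $T$, the same law as $(M_T(\gamma))_{T>0}$. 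In particular $(M_T^\alpha)_T$ is uniformly integrable by hypothesis.

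Next I would condition on $\mathcal F_T=\sigma(\dot B_s:s\le T)$ and integrate out $\dot C$. Using Lemma \ref{lemma:exchange-cond-probs} to exchange $\mathbf E[\,\cdot\mid\mathcal F_T]$ with the $\P_0$-integral, the independence of $\widehat H_T(\omega)$ from $\mathcal F_T$, and $\widehat H_T(\omega)\sim\mathcal N(0,T(\phi\star\phi)(0))$ for each fixed $\omega$, one gets $\mathbf E\big[e^{\gamma\sqrt{1-\alpha^2}\,\widehat H_T(\omega)}\mid\mathcal F_T\big]=\exp\!\big(\tfrac{\gamma^2(1-\alpha^2)}2 T(\phi\star\phi)(0)\big)$. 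Since $\gamma H_T^\alpha=\gamma' H_T+\gamma\sqrt{1-\alpha^2}\,\widehat H_T$ and $\gamma^2\alpha^2=(\gamma')^2$, the variance terms cancel precisely and yield the key identity
\[
\mathbf E\big[M_T^\alpha\mid\mathcal F_T\big]=\int_\Omega \exp\!\Big(\gamma' H_T(\omega)-\tfrac{(\gamma')^2}2 T(\phi\star\phi)(0)\Big)\,\P_0(\d\omega)=M_T(\gamma').
\]
Finally, invoking de la Vallée-Poussin I would fix a convex increasing $\Phi\ge 0$ with $\Phi(x)/x\to\infty$ and $\sup_T\mathbf E[\Phi(M_T^\alpha)]<\infty$; conditional Jensen gives $\Phi(M_T(\gamma'))=\Phi(\mathbf E[M_T^\alpha\mid\mathcal F_T])\le\mathbf E[\Phi(M_T^\alpha)\mid\mathcal F_T]$, hence $\sup_T\mathbf E[\Phi(M_T(\gamma'))]\le\sup_T\mathbf E[\Phi(M_T^\alpha)]<\infty$, so $(M_T(\gamma'))_T$ is uniformly integrable.

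The only genuine insight is the interpolation in the first step: it must be chosen so that, after integrating out the auxiliary noise $\dot C$, the Gaussian Laplace transform reproduces exactly the $\gamma'$-renormalization $\tfrac{(\gamma')^2}2 T(\phi\star\phi)(0)$ rather than some $T$-dependent excess. Once the identity $M_T(\gamma')=\mathbf E[M_T^\alpha\mid\mathcal F_T]$ is established, the transfer of uniform integrability is routine. I would therefore expect the main care to lie in the exponent bookkeeping (the relations $\gamma^2\alpha^2=(\gamma')^2$ and the cancellation of variance terms) and in checking the integrability and measurability hypotheses needed to apply Lemma \ref{lemma:exchange-cond-probs}.
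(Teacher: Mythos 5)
Your proof is correct and takes essentially the same approach as the paper's: both rest on the interpolation identity $\mu_{\gamma',T}(\Omega,\dot B)=\mathbf E\big[\mu_{\gamma,T}\big(\Omega,\alpha\dot B+\sqrt{1-\alpha^2}\,\dot C\big)\,\big|\,\dot B\big]$, followed by de la Vall\'ee-Poussin and conditional Jensen to transfer uniform integrability. The only cosmetic differences are that you condition on $\mathcal F_T$ rather than on all of $\dot B$ (equivalent here, since $H_T$ is $\mathcal F_T$-measurable and the auxiliary noise is independent) and that you spell out the Gaussian cancellation behind the identity, which the paper states without computation.
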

	\begin{proof}
		Fix $\gamma>0$ such that  $(\mu_{\gamma,T}(\Omega))_{T\geq 0}$ is uniformly integrable. Let $\dot{B},\dot{B}'$ be independent copies of the noise, and let $\gamma'<\gamma$, so that $\gamma'=c\gamma$ for some $0<c<1$. To avoid ambiguities, we write $\mu_{\gamma,T}(\Omega)=\mu_{\gamma,T}(\Omega,\dot{B})$. Note that \begin{equation}\label{eq-eq20}
			\mu_{\gamma',T}(\Omega,\dot{B})=\mu_{c\gamma,T}(\Omega,\dot{B})=\mathbf{E}\left[\mu_{\gamma,T}(\Omega,c\dot{B}+\sqrt{1-c^2}\dot{B}')\vert \dot{B}\right].
		\end{equation}
		Since $(\mu_{\gamma,T}(\Omega,\dot{B}))_{T\geq 0}$ is uniformly integrable, then by de La Vall\'ee-Poussin's theorem, there exists a function $f:[0,\infty)\mapsto [0,\infty)$ convex and increasing such that 
\begin{equation*}
	\lim_{x\to\infty}\frac{f(x)}{x}=\infty, \qquad\mbox{and}\qquad 
\sup_{T}\mathbf{E}[f(\mu_{\gamma,T}(\Omega,\dot{B}))]<\infty.
\end{equation*}
By \eqref{eq-eq20} and Jensen's inequality, \begin{align*}
	\mathbf{E}[f(\mu_{\gamma',T}(\Omega,\dot{B}))]&=\mathbf{E}\left[f\left(\mathbf{E}\left[\mu_{\gamma,T}(\Omega,c\dot{B}+\sqrt{1-c^2}\dot{B}')\vert \dot{B}\right]\right)\right]\\
	&\leq \mathbf{E}\left[f\left(\mu_{\gamma,T}(\Omega,c\dot{B}+\sqrt{1-c^2}\dot{B}')\right)\right]\\
	&=\mathbf{E}[f(\mu_{\gamma,T}(\Omega,\dot{B}))].
\end{align*}
It follows that $\sup_{T}\mathbf{E}[f(\mu_{\gamma,T}(\Omega,\dot{B}))]<\infty$, which in turn implies the uniform integrability of $(\mu_{\gamma',T}(\Omega,\dot{B}))_{T\geq 0}$.
	\end{proof}

The last lemma allows us to define the critical parameter

\begin{equation}\label{def-gammac}
\gamma_c=\gamma_c(d)= \sup\bigg\{\gamma: \mbox{the martingale  } \mu_{\gamma,T}(\Omega) \mbox{ is uniformly integrable}\bigg\}.
\end{equation}
In particular, if $\gamma<\gamma_c$, then $(\mu_{\gamma,T}(\Omega))_{T\geq 0}$ is uniformly integrable, while for $\gamma>\gamma_c$, $(\mu_{\gamma,T}(\Omega))_{T\geq 0}$ is not uniformly integrable. Since we will be dealing also with the normalized probability measure $\widehat{\mu}_{\gamma, T}$ from \eqref{def-mugammaT}, we need to know whether the denominator vanishes or not at the limit. The next lemma tell us that in the uniform integrable phase, the limit $\lim_{T\to\infty}\mu_{\gamma,T}(\Omega)$ is positive $\bf P$-a.s:
 \begin{lemma}\label{lemma-unif-int-imp-pos}
 	If  $(\mu_{\gamma,T}(\Omega))_{T \geq 0}$ is uniformly integrable, then $\lim_{T\to\infty}\mu_{\gamma,T}(\Omega)>0$ $\bf P$\text{-}{\rm a.s.}
 \end{lemma}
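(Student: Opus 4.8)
The plan is to establish a zero--one dichotomy for the event $\{M_\infty=0\}$, where $M_T:=\mu_{\gamma,T}(\Omega)$ and $M_\infty:=\lim_{T\to\infty}M_T$, and then to exclude the value $1$. By Lemma~\ref{mart}(c) with $A=\Omega$, the process $(M_T)_{T>0}$ is a nonnegative $(\F_T)$-martingale with $\mathbf E[M_T]=M_0=1$, so martingale convergence gives $M_T\to M_\infty$ $\mathbf P$-a.s.; the assumed uniform integrability upgrades this to $L^1(\mathbf P)$-convergence, whence $\mathbf E[M_\infty]=1$ and in particular $\mathbf P(M_\infty=0)<1$. It therefore suffices to show $\mathbf P(M_\infty=0)\in\{0,1\}$.

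The core of the argument is to show that $\{M_\infty=0\}$ belongs to the tail $\sigma$-algebra $\bigcap_{t>0}\sigma(\dot B|_{[t,\infty)})$ of the (independent) increments of $\dot B$. Fix $t>0$ and split $H_T=H_t+(H_T-H_t)$, noting that $H_T-H_t=\int_t^T\int_{\R^d}\phi(\omega_s-y)\dot B(s,y)\,\d y\,\d s$ is measurable with respect to $\dot B|_{[t,T]}$ and $\omega|_{[t,T]}$. The weight defining $\mu_{\gamma,T}$ factorizes accordingly, and the Markov property of $\P_0$ at time $t$ yields
\[
M_T=\int_\Omega e^{\gamma H_t(\omega)-\frac{\gamma^2}{2}t(\phi\star\phi)(0)}\,g_{t,T}(\omega_t)\,\P_0(\d\omega),
\]
where $g_{t,T}(x)$ is the total mass of the finite-volume measure built from the noise increment $\dot B|_{[t,T]}$ and Brownian paths issued from $x$ at time $t$. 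By stationarity of $\dot B$ under the shifts $\theta_{t,x}$, for each fixed $x$ the process $(g_{t,T}(x))_{T\ge t}$ has the law of $(M_{T-t})_{T\ge t}$; it is thus a nonnegative, uniformly integrable martingale in the future noise and converges $\mathbf P$-a.s.\ and in $L^1(\mathbf P)$ to a $\sigma(\dot B|_{[t,\infty)})$-measurable limit $g_{t,\infty}(x)$ with $\mathbf E[g_{t,\infty}(x)]=1$.

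Letting $T\to\infty$ in the displayed identity, I would pass the limit inside the $\P_0$-integral to obtain
\[
M_\infty=\int_\Omega e^{\gamma H_t(\omega)-\frac{\gamma^2}{2}t(\phi\star\phi)(0)}\,g_{t,\infty}(\omega_t)\,\P_0(\d\omega).
\]
Since the exponential prefactor is $\mathbf P\otimes\P_0$-a.s.\ strictly positive and the law of $\omega_t$ under $\P_0$ has a strictly positive Lebesgue density, nonnegativity of the integrand forces $\{M_\infty=0\}=\{g_{t,\infty}(x)=0\text{ for Lebesgue-a.e.\ }x\}$ up to a $\mathbf P$-null set. The right-hand event is $\sigma(\dot B|_{[t,\infty)})$-measurable; as $t>0$ was arbitrary, $\{M_\infty=0\}$ lies in the tail $\sigma$-algebra, and Kolmogorov's zero--one law gives $\mathbf P(M_\infty=0)\in\{0,1\}$. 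Combined with $\mathbf P(M_\infty=0)<1$, this yields $\mathbf P(M_\infty=0)=0$, i.e.\ $M_\infty>0$ $\mathbf P$-a.s.

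I expect the main obstacle to be the justification of the interchange of the limit $T\to\infty$ with the $\P_0$-integration in the last display (and the attendant measurability of $g_{t,\infty}$). Fatou's lemma alone only gives $\int g_{t,\infty}(\omega_t)\,\mu_{\gamma,t}(\d\omega)\le M_\infty$; the reverse inequality, needed to convert $\{g_{t,\infty}\equiv0\}$ into $\{M_\infty=0\}$, should follow from the $L^1(\mathbf P)$-convergence of the shifted martingales $g_{t,T}(x)$—valid in the same weak-disorder regime by the uniform-integrability hypothesis—together with Fubini's theorem applied after conditioning on $\dot B|_{[0,t]}$, which fixes the finite measure $\mu_{\gamma,t}$ and its endpoint distribution.
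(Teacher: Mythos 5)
Your proof is correct and follows essentially the same route as the paper: uniform integrability gives $\mathbf{E}[\mu_\gamma(\Omega)]=1$, hence $\mathbf{P}(\mu_\gamma(\Omega)>0)>0$, and Kolmogorov's zero--one law upgrades this to probability one. The only difference is one of detail: the paper merely asserts that $\{\mu_\gamma(\Omega)>0\}$ is a zero--one event, whereas you supply the Markov-property/stationarity decomposition (with the $L^1$-convergence and Fubini argument justifying the exchange of $T\to\infty$ with the $\P_0$-integral) showing that this event coincides, up to $\mathbf{P}$-null sets, with a tail event of the noise.
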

 \begin{proof}
 	First we note that $\mathbf{E}[\mu_{\gamma,T}(\Omega)]=1$, so that $\mathbf{P}(\lim_{T\to\infty}\mu_{\gamma,T}(\Omega)>0)>0$. Moreover, the event \begin{equation}\label{eq-01-event}
		A_\gamma:=\{\mu_{\gamma}(\Omega)>0\}
	\end{equation}
	has probability 0 or 1 in virtue of Kolmogorov's 0-1 law.
 \end{proof}
 
The non-triviality of $\gamma_c$, i.e., that $\gamma_c>0$ for $d\geq 3$ is implied by the existence of the so-called \textit{$L^2$-phase}:
\begin{lemma}\label{lemma-L2-bounded}
	For $d\geq 3$ and $\gamma>0$ small enough, the martingale $(\mu_{\gamma,T}(\Omega))_{T\geq 0}$ is bounded in $L^2(\bf P)$. In particular, $\gamma_c>0$. 
\end{lemma}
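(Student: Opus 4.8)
The plan is to bound the second moment of the partition function uniformly in $T$ for small $\gamma$; since an $L^2$-bounded martingale is uniformly integrable, this immediately yields $\gamma_c>0$. First I would expand the square with two independent Wiener paths $\omega,\omega'\sim\P_0$ and use Fubini to write
\begin{equation*}
\mathbf E\big[\mu_{\gamma,T}(\Omega)^2\big]=\int_\Omega\!\int_\Omega \mathbf E\Big[\exp\big(\gamma(H_T(\omega)+H_T(\omega'))-\gamma^2 T(\phi\star\phi)(0)\big)\Big]\,\P_0(\d\omega)\,\P_0(\d\omega').
\end{equation*}
For fixed $\omega,\omega'$ the pair $(H_T(\omega),H_T(\omega'))$ is jointly centered Gaussian under $\mathbf P$ (as $H_T$ is linear in $\dot B$), with covariance given in \eqref{cov3}, so the inner expectation is a Gaussian Laplace transform. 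The diagonal variance terms, totalling $\gamma^2 T(\phi\star\phi)(0)$, cancel exactly against the normalization, leaving only the cross term:
\begin{equation*}
\mathbf E\big[\mu_{\gamma,T}(\Omega)^2\big]=\int_\Omega\!\int_\Omega \exp\Big(\gamma^2\int_0^T(\phi\star\phi)(\omega_s-\omega'_s)\,\d s\Big)\,\P_0(\d\omega)\,\P_0(\d\omega').
\end{equation*}

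Next I would recognise the difference $W_s:=\omega_s-\omega'_s$ as a Brownian motion (equal in law to $\sqrt2$ times a standard one, hence with generator $\Delta$), which turns the right-hand side into a Feynman--Kac functional
\begin{equation*}
\mathbf E\big[\mu_{\gamma,T}(\Omega)^2\big]=\E_0\Big[\exp\Big(\gamma^2\int_0^T V(W_s)\,\d s\Big)\Big],\qquad V:=\phi\star\phi\ge 0,
\end{equation*}
where $\E_x$ denotes expectation for $W$ started at $x$. Since $V\ge0$ the exponent is nondecreasing in $T$, so it suffices to bound this functional with $T$ replaced by $\infty$, uniformly over the starting point.

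The dimension restriction $d\ge3$ enters precisely here, through the transience of $W$. I would invoke Khasminskii's lemma: if
\begin{equation*}
\alpha:=\sup_{x\in\R^d}\E_x\Big[\int_0^\infty \gamma^2 V(W_s)\,\d s\Big]<1,\qquad\text{then}\qquad \sup_{x\in\R^d}\E_x\Big[\exp\Big(\gamma^2\int_0^\infty V(W_s)\,\d s\Big)\Big]\le\frac{1}{1-\alpha}.
\end{equation*}
The inner expectation equals $\gamma^2(G\star V)(x)$, where $G(x,y)=c_d|x-y|^{2-d}$ is the Green's function of $W$. For $d\ge3$ this $G$ is locally integrable, and since $V=\phi\star\phi$ is bounded with compact support, the constant $M:=\sup_{x}(G\star V)(x)$ is finite. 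Hence $\alpha=\gamma^2 M<1$ for every $\gamma<1/\sqrt M$, and for such $\gamma$ we obtain $\sup_T\mathbf E[\mu_{\gamma,T}(\Omega)^2]\le(1-\gamma^2 M)^{-1}<\infty$. This is the asserted $L^2(\mathbf P)$-boundedness, and as $L^2$-bounded martingales are uniformly integrable, every such $\gamma$ satisfies $\gamma\le\gamma_c$, whence $\gamma_c>0$.

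I expect the genuine content to lie in the structural reduction rather than in any single estimate: passing from the white-noise second moment to a Brownian Feynman--Kac functional, and then seeing that transience in $d\ge3$ -- the finiteness of $G\star(\phi\star\phi)$ -- is exactly what keeps $\alpha$ finite, so that a small coupling $\gamma$ can push it below the Khasminskii threshold $1$. The cancellation of the diagonal terms and the verification that $M<\infty$ are then routine.
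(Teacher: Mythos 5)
Your proposal is correct and follows essentially the same route as the paper: expanding the second moment over two independent Wiener paths, using the Gaussian Laplace transform so that the diagonal terms cancel and only the cross term $\gamma^2\int_0^T(\phi\star\phi)(\omega_s-\omega'_s)\,\d s$ survives, identifying $\omega-\omega'$ with $\sqrt2$ times a Brownian motion, and then applying Khasminskii's lemma, where transience in $d\ge3$ (finiteness of the Green's-function convolution with the bounded, compactly supported $\phi\star\phi$) makes the expected additive functional small for small $\gamma$. Your write-up only makes the Green's function $G(x,y)=c_d|x-y|^{2-d}$ explicit where the paper simply asserts finiteness of $I(\phi)$; otherwise the arguments coincide.
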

\begin{proof}
	Let $\omega'$ be an independent copy of $\omega$ under $\P_0$. Denote by $\P_0^{\otimes 2}$ the product measure of $\P_0$. Then by definition and Fubini's theorem, \begin{align*}
		\mathbf{E}\left[\mu_{\gamma,T}(\Omega)^2\right]&=\mathbf{E}\left[\int_{\Omega^2}\e^{\gamma(H_T(\omega)+H_T(\omega'))-\gamma^2 T(\phi\star\phi)(0)}\d\P_0^{\otimes 2}(\omega,\omega')\right]\\
		&=\int_{\Omega^2}\d\P_0^{\otimes 2}(\omega,\omega')\mathbf{E}\left[\e^{\gamma(H_T(\omega)+H_T(\omega'))-\gamma^2 T(\phi\star\phi)(0)}\right].
	\end{align*}
	Recall that for $\omega,\omega'\in \Omega$, $H_T(\omega)+H_T(\omega')$ is a Gaussian random variable with mean 0 and variance \begin{equation*}
		2T(\phi\star\phi)(0)+2\int_0^T(\phi\star\phi)(\omega_s-\omega'_s)\d s, 
	\end{equation*}
	and so \begin{equation*}
		\mathbf{E}\left[\e^{\gamma(H_T(\omega)+H_T(\omega'))-\gamma^2 T(\phi\star\phi)(0)}\right]=\e^{\gamma^2\int_{0}^T(\phi\star\phi)(\omega_s-\omega'_s)\d s}.
	\end{equation*}
	Thus, \begin{equation*}
	\begin{aligned}
		\mathbf{E}\left[\mu_{\gamma,T}(\Omega)^2\right]=\int_{\Omega^2}\d\P_0^{\otimes 2}(\omega,\omega')\e^{\gamma^2\int_{0}^T(\phi\star\phi)(\omega_s-\omega'_s)\d s}&=\int_{\Omega}\d \P_0(\omega)\e^{\gamma^2\int_0^T(\phi\star\phi)(\sqrt{2}\omega_s)\d s} \\
	&\leq \int_{\Omega}\d \P_0(\omega)\e^{\gamma^2\int_0^\infty(\phi\star\phi)(\sqrt{2}\omega_s)\d s}.
	\end{aligned}
	\end{equation*}
	Since $d\geq 3$ and $\phi\star \phi$ is bounded with compact support, 
	$$
	I(\phi) := \sup_{x\in \R^d}\int_{\Omega}\d \P_x(\omega) \int_0^\infty(\phi\star\phi)(\sqrt{2}\omega_s)\d s<\infty,
	$$
	so that that for $\gamma>0$ small enough,
	$$
	\gamma^2 I(\phi) < 1.
	$$
	By Kahs'minski's lemma \cite{K59} we deduce that 
	\begin{equation*}
	\sup_{T}\mathbf{E}\left[\mu_{\gamma,T}(\Omega)^2\right]\leq \sup_{x\in \R^d}\int_{\Omega}\d \P_x(\omega)\e^{\gamma^2\int_0^\infty\phi\star\phi(\sqrt{2}\omega_s)\d s}<\infty.
	\end{equation*}

\end{proof}

\begin{remark}\label{remark-MSZ}
\textup{By an application of Kahane's inequality \cite{K86}, in \cite{MSZ16} it is proved additionally that $\gamma_c<\infty$, and   \begin{equation*}
	(\mu_{\gamma,T}(\Omega))_{T\geq 0} \text{ is uniformly integrable}\Longleftrightarrow \lim_{T\to\infty}\mu_{\gamma,T}(\Omega)>0 \qquad \mathbf {P}\text{-a.s.,}
	\end{equation*}
	strengthening  Lemma \ref{lemma-unif-int-imp-pos} to an ``if and only if" statement. These results are not required in the sequel. We also refer to \cite{RT04} where 
	the weak and strong disorder for the partition function as well as concentration inequalities were obtained for the continuous directed polymer in a Gaussian random environment.}
	\end{remark}

\subsection{Weak convergence of the approximating measures $\widehat\mu_{\gamma,T}$.}\label{sec-weak-converge}

The proof of the first part of Theorem \ref{thm-existence} is split into two parts. At first, we will show that for a fixed Borel set $A \subset \Omega$, the sequence $(\MT(A))_{T\geq 0}$ converges to some random variable. The second step will be to deduce that $\MT $ and its normalized version converge weakly as a measure.

\begin{lemma}\label{conv}
	Fix a Borel set $A\subset \Omega$. For $\gamma < \gamma_c$, the sequence $(\MT(A))_{T\geq 0}$ converges in $L^1(\mathbf{P})$ and almost surely to a random variable $\mu_{\gamma}(A)$.
\end{lemma}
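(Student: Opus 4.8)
The plan is to read the result off directly from the martingale structure established earlier in the excerpt, combined with the subcriticality assumption $\gamma<\gamma_c$.

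First I would observe that, by Lemma \ref{mart}(c), the process $(\MT(A))_{T>0}$ is a martingale with respect to $(\F_T)_{T>0}$, and that it is moreover nonnegative, being the integral over $A$ of the nonnegative integrand $\exp(\gamma H_T(\omega)-\frac{\gamma^2}{2}T(\phi\star\phi)(0))$ against $\P_0$. A nonnegative martingale is in particular bounded in $L^1(\mathbf P)$ (its expectation is constant in $T$), so Doob's martingale convergence theorem immediately yields an almost sure limit $\mu_\gamma(A):=\lim_{T\to\infty}\MT(A)\geq 0$, $\mathbf P$-a.s.

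The second and only substantive step is to upgrade this to $L^1(\mathbf P)$ convergence, for which it suffices to establish uniform integrability of $(\MT(A))_{T>0}$. Here I would use the elementary pointwise domination $0\leq \MT(A)\leq \MT(\Omega)$, valid because $A\subset\Omega$ and the integrand is nonnegative. Since $\gamma<\gamma_c$, the definition \eqref{def-gammac} of $\gamma_c$ (together with Lemma \ref{lemma-unif-int-mono}) guarantees that the total-mass martingale $(\MT(\Omega))_{T>0}$ is uniformly integrable. Uniform integrability is inherited by nonnegative dominated families: from the inclusion $\{\MT(A)>K\}\subseteq\{\MT(\Omega)>K\}$ one gets $\MT(A)\1_{\{\MT(A)>K\}}\leq \MT(\Omega)\1_{\{\MT(\Omega)>K\}}$, so taking expectations, then the supremum over $T$, and finally letting $K\to\infty$ yields $\sup_T \mathbf E[\MT(A)\1_{\{\MT(A)>K\}}]\to 0$. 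Combining almost sure convergence with uniform integrability then gives convergence of $(\MT(A))_{T>0}$ to $\mu_\gamma(A)$ in $L^1(\mathbf P)$, as claimed.

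I do not anticipate a genuine obstacle: the lemma is essentially a clean corollary of the martingale framework already in place. The only point that genuinely requires the subcritical hypothesis $\gamma<\gamma_c$, and hence the only place where the argument could break, is the uniform integrability in the second step; without it one would retain almost sure convergence but could lose the $L^1$ limit (and with it the eventual non-triviality of $\mu_\gamma$) to an escape of mass at infinity.
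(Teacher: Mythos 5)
Your proof is correct and takes essentially the same route as the paper's: the martingale property from Lemma \ref{mart}(c), uniform integrability of $(\MT(A))_{T>0}$ inherited from that of $(\MT(\Omega))_{T>0}$ (guaranteed for $\gamma<\gamma_c$) via the domination $\MT(A)\leq \MT(\Omega)$, and martingale convergence. The only cosmetic difference is that the paper invokes the uniformly integrable martingale convergence theorem once to obtain both almost sure and $L^1(\mathbf{P})$ convergence, whereas you first extract the almost sure limit from nonnegativity and then upgrade to $L^1(\mathbf{P})$ using the uniform integrability.
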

\begin{proof}
By Lemma \ref{mart}, for any Borel set $A \subset \Omega$, 
\begin{equation}\label{eq-Mt-def}
	\MT(A) = \int_A \exp \left\{ \gamma {H}_T(\omega) - \frac{\gamma^2}{2}T (\phi \star \phi) (0) \right\} \d \P_0(\omega)
\end{equation}
is a martingale with respect to the filtration $(\F_T)_T$. By our choice of $\gamma$, the sequence $(\MT(\Omega))_T $ is uniformly integrable and therefore the same is true for $(\MT(A))_T $. By the martingale convergence theorem, the latter converges in $L^1(\mathbf{P})$ and almost surely to a random variable. We denote the limit by $\mu_{\gamma}(A)$.

\end{proof}

\begin{remark}\label{rem2}
	\textup{By Lemma \ref{lemma-unif-int-imp-pos}, we know that $\mu_\gamma(\Omega)>0$ $\bf P$-a.s. Hence, we can replace $\mu_{\gamma,T}$ by its normalized version while preserving the almost sure convergence. Indeed, for any $A \in \mathcal{B}$, we have
	$$
	\widehat{\mu}_{\gamma, T}(A) {:=} \frac{\MT(A)}{\MT(\Omega)} \longrightarrow \frac{\mu(A)}{\mu(\Omega)} \quad \mathbf{P}\text{-a.s.}
	$$
}	
\end{remark}



\begin{lemma}\label{measure}
	Let $\mu_\gamma$ be defined as in Lemma \ref{conv}. If $(A_n)_n \subset \mathcal{B}$ is a sequence of subsets of $\Omega$ that is increasing in the sense that $A_n \subset A_{n+1}$ for all $n \ge 1$, then $\mathbf{P}$\text{-}{\rm a.s.} it is true that
	\begin{equation}\label{sigmacont}
	\mu_{\gamma}\left( \bigcup_{n}A_n \right) = \lim_{n \to \infty} \mu_\gamma(A_n).
	\end{equation}
	
	An analogous statement holds for decreasing events.
\end{lemma}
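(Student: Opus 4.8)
The plan is to exploit two facts already available from Lemma \ref{conv}: first, that for \emph{each} fixed Borel set $B$ the approximations $\MT(B)$ converge to $\mu_\gamma(B)$ both almost surely and in $L^1(\mathbf P)$; and second, that for each fixed $T$ the set function $B\mapsto \MT(B)=\int_B \exp(\gamma H_T-\tfrac{\gamma^2}{2}T(\phi\star\phi)(0))\,\d\P_0$ is a genuine finite measure, absolutely continuous w.r.t.\ $\P_0$, hence monotone and countably additive in $B$. The bridge between the two regimes is the identity $\mathbf E[\mu_\gamma(B)]=\P_0(B)$, valid for every Borel $B$: indeed $\mathbf E[\MT(B)]=\int_B \mathbf E[\exp(\gamma H_T-\tfrac{\gamma^2}{2}T(\phi\star\phi)(0))]\,\d\P_0=\P_0(B)$ since the integrand is a mean-one exponential martingale (Lemma \ref{mart}(b)), and the $L^1(\mathbf P)$-convergence of Lemma \ref{conv} lets me pass $T\to\infty$ inside the expectation.

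First I would fix a single $\mathbf P$-full-probability event $\Omega^\ast$ on which the convergence $\MT(\,\cdot\,)\to\mu_\gamma(\,\cdot\,)$ holds simultaneously for the set $A:=\bigcup_n A_n$ and for each $A_n$; this is legitimate because the family $\{A\}\cup\{A_n:n\ge 1\}$ is \emph{countable}, so the intersection of the corresponding null sets is still null. On $\Omega^\ast$, the pointwise (in $T$) inequalities $\MT(A_n)\le \MT(A_{n+1})\le \MT(A)$ survive in the limit, giving $\mu_\gamma(A_n)\le\mu_\gamma(A_{n+1})\le\mu_\gamma(A)$. Hence $L:=\lim_{n\to\infty}\mu_\gamma(A_n)$ exists as a monotone limit and satisfies the easy one-sided bound $L\le \mu_\gamma(A)$ almost surely.

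It remains to upgrade this inequality to an equality, and here the $L^1$ (not merely a.s.) convergence is the crucial input. Applying the monotone convergence theorem on the probability space $(\mathcal E,\F,\mathbf P)$ to the increasing sequence $\mu_\gamma(A_n)\uparrow L$, and then using the expectation identity together with continuity from below of $\P_0$, I obtain
$$
\mathbf E[L]=\lim_{n\to\infty}\mathbf E[\mu_\gamma(A_n)]=\lim_{n\to\infty}\P_0(A_n)=\P_0(A)=\mathbf E[\mu_\gamma(A)].
$$
Since $\mu_\gamma(A)-L\ge 0$ almost surely and has zero expectation, it vanishes $\mathbf P$-a.s., which is \eqref{sigmacont}. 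The decreasing case $A_n\downarrow A=\bigcap_n A_n$ is handled symmetrically: one gets $\mu_\gamma(A_n)\downarrow L'\ge\mu_\gamma(A)$ on a full-probability event, and dominated convergence (the majorant $\mu_\gamma(A_1)$ lies in $L^1(\mathbf P)$ by the identity above) yields $\mathbf E[L']=\lim_n\P_0(A_n)=\P_0(A)=\mathbf E[\mu_\gamma(A)]$, forcing $L'=\mu_\gamma(A)$ a.s.

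The point requiring care—and the only genuine obstacle—is that $\mu_\gamma(B)$ is defined merely up to a $\mathbf P$-null set that \emph{depends on $B$}, so one cannot treat $\mu_\gamma$ as a bona fide measure over all of $\mathcal B$ on one null set (there are uncountably many Borel sets). The lemma circumvents this precisely by restricting attention to a fixed countable family, and the argument hinges on having $L^1(\mathbf P)$-convergence available, since it is the expectation identity $\mathbf E[\mu_\gamma(B)]=\P_0(B)$—rather than any pathwise additivity—that promotes the trivial inequality $L\le\mu_\gamma(A)$ to the desired equality.
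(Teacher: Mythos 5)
Your proposal is correct and follows essentially the same route as the paper's proof: establish the one-sided inequality $\lim_n\mu_\gamma(A_n)\le\mu_\gamma\bigl(\bigcup_n A_n\bigr)$ from monotonicity of each $\MT$, then show the expectations agree via the identity $\mathbf E[\mu_\gamma(B)]=\P_0(B)$ (Fubini, mean-one martingale, and $L^1(\mathbf P)$-convergence) together with monotone convergence, and conclude that a nonnegative random variable with zero mean vanishes a.s. Your treatment is if anything slightly more explicit than the paper's on the two points it leaves implicit — the choice of a single full-measure event for the countable family, and the dominated-convergence argument (with majorant $\mu_\gamma(A_1)$) for the decreasing case.
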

\begin{proof}
First note that the limit on the right hand side in (\ref{sigmacont}) exists since the sequence $(\mu_\gamma(A_n))_n$ is non-decreasing (by monotonicity of the limit and of $\MT$ as a measure) and it is bounded by
$$
\mu_\gamma\left( \bigcup_{n}A_n \right) \ge \lim_{n \to \infty}\mu_\gamma(A_n).
$$
To prove almost sure equality, it suffices to show that their expectations coincide. By $L^1(\mathbf{P})$-convergence, the definition of the measure $\MT$ and Fubini's theorem, we obtain
\begin{flalign*}
	\mathbf{E}\left[\mu_\gamma\left( \bigcup_{n}A_n \right) \right]  &= \lim_{T \to \infty} \mathbf{E}\left[ \MT\left( \bigcup_{n}A_n \right) \right] \\
	 &= \lim_{T \to \infty} \mathbf{E}\left[ \int_{\bigcup_{n}A_n} \exp \left\{ \gamma {H}_T(\omega) - \frac{\gamma^2}{2}T (\phi \star \phi) (0) \right\} \d \P_0(\omega)  \right] \\
	 &= \lim_{T \to \infty} \int_{\bigcup_{n}A_n} \mathbf{E}\left[ \exp \left\{ \gamma {H}_T(\omega) - \frac{\gamma^2}{2}T (\phi \star \phi) (0) \right\} \right] \d \P_0(\omega) \\
	 &= \lim_{T \to \infty} \P_0\left( \bigcup_{n}A_n \right) = \P_0\left( \bigcup_{n}A_n \right) = \lim_{n \to \infty} \P_0(A_n).
\end{flalign*}
On the other hand, by monotone convergence, $L^1(\mathbf{P})$-convergence and the same argument as above, we obtain
\begin{flalign*}
	\mathbf{E}\left[ \lim_{n \to \infty} \mu_\gamma(A_n) \right] &= \lim_{n \to \infty}\mathbf{E}\left[\mu_\gamma(A_n)\right] = \lim_{n \to \infty} \mathbf{E}\left[ \lim_{T \to \infty} \MT(A_n) \right]\\
	& \lim_{n \to \infty} \lim_{T \to \infty} \mathbf{E}\left[ \MT(A_n) \right] = \lim_{n \to \infty} \P_0(A_n).
\end{flalign*}
This concludes the proof.

\end{proof}

\begin{remark}
	If we normalize $\mu_{\gamma}(A)$, i.e. if we consider the random variables given by $\frac{\mu_{\gamma}(A)}{\mu_{\gamma}(\Omega)}$ for $A \in \mathcal{B}$, then clearly the statement in Lemma \ref{measure} remains true.
\end{remark}

The key argument for the proof of Theorem \ref{thm-existence} is provided by 

\begin{prop}\label{prop-weak-conv}
	Suppose that for each Borel set $A\subset \Omega$, the sequence $\MT(A)$ converges in $L^1(\mathbf{P})$ and almost surely to some random variable $\mu_{\gamma}(A)$ as $T \to \infty$. Then the sequence $(\mu_{\gamma,T})_{T\geq 0}$ converges weakly $\mathbf{P}${\rm -a.s.} to a random measure $\mu_\gamma$.  Similarly, the sequence of normalized measures $(\widehat{\mu}_{\gamma, T})_{T\geq 0}$ converges weakly $\mathbf{P}${\rm -a.s.} to a random probability measure $\widehat{\mu}_{\gamma}$.
\end{prop}	
\begin{proof}
We will carry out the proof in few steps.

\noindent{\bf Step 1:} In the first step, we will show that, there is a collection $\mathcal X$ of countably many Borel measurable subsets of $\Omega$ such that 
\begin{equation}\label{as-sim}
\widehat{\mu}_{\gamma, T}(A) \overset{T\to \infty}{\longrightarrow} \nicefrac{\mu_{\gamma}(A)}{\mu(\Omega)} \quad\mbox{$\mathbf P$- almost surely for all $A \in \mathcal{X}$ simultaneously.}
\end{equation}
Indeed, we define the sets
\begin{equation}\label{calA}
\begin{aligned}
	&\mathcal{A}_o {:=} \{ (x_1, y_1)\times \ldots \times (x_d,y_d) \, : \, x_i,y_i \in \Q \}, \\
	& \mathcal{A}_c {:=} \{ [x_1, y_1]\times \ldots \times [x_d,y_d] \, : \, x_i,y_i \in \Q \}, \\
	& \mathcal{A}_s {:=} \{ [x_1, y_1)\times \ldots \times [x_d,y_d) \, : \, x_i,y_i \in \Q \}, \\
\end{aligned}
\end{equation}
and 
\begin{equation}\label{calX1}
\mathcal{X}_o^1 {:=} \bigg\{ \big\{\omega: \omega(t_1) \in A_1, \ldots, \omega(t_n) \in A_n \big \}  :n \in \N, t_1 < \ldots < t_n \in \Q^+, A_1, \ldots, A_n \in \mathcal{A}_o  \bigg\}.
\end{equation}
Analogously, we define $\mathcal{X}_c^1$ (by replacing $\mathcal A_o$ by $\mathcal A_c$ in the above definition)
and $\mathcal{X}_s^1$ (by replacing $\mathcal A_o$ by $\mathcal A_s$), and set 
$$
\mathcal{X}_1 {:=} \mathcal{X}_o^1 \cup \mathcal{X}_c^1 \cup \mathcal{X}_s^1.
$$
 Furthermore, for any $\delta>0$, let 
 \begin{equation}\label{modcont}
 m^T(\omega, \delta) {:=} \sup_{\heap{0\le s, t\le T}{ |t-s|<\delta}}|\omega(t)-\omega(s)|
 \end{equation}
 and set  
\begin{equation}\label{calX2calX}
\begin{aligned}
&\mathcal{X}_2 :=  \Omega \cup \Bigg\{ \bigg\{\omega : \sup_{0\le t\le T} |\omega(t)|>a\bigg\} : T, a \in \Q^+ \Bigg\} \cup \bigg\{ \big\{\omega : m^T(\omega, \delta) > \varepsilon \big\} : T, \delta, \varepsilon \in \Q^+ \bigg\},\quad\mbox{and}\\
&\hspace{25mm}\mathcal{X} := \mathcal{X}_1 \cup \mathcal{X}_2.
\end{aligned}
\end{equation} 
Thus,  $\mathcal{X}$ contains countably many Borel measurable subsets of $\Omega$. By Lemma \ref{conv} (actually, by Remark \ref{rem2}), 
for any Borel subset $A$ of $\Omega$, 
$$
\widehat{\mu}_{\gamma, T}(A) \overset{T\to \infty}{\longrightarrow} \nicefrac{\mu_{\gamma}(A)}{\mu_\gamma(\Omega)}
$$
$\mathbf P$-almost surely. But since $\mathcal{X}$ is countable, the above almost sure convergence happens 
for all $A \in \mathcal{X}$ simultaneously, proving \eqref{as-sim}.

 \medskip
 
 \noindent{\bf Step 2:} We will now show that, $\mathbf P$ almost surely, for any $\ell\in \Q_+$ and $\eta\in \Q_+$, there is $a\in \Q_+$ such that
		\begin{equation}\label{tight1}
		\widehat\mu_{\gamma,T}\bigg( \bigg\{\omega \, : \, \sup_{0\le t\le \ell} |\omega(t)|>a  \bigg\} \bigg) < \eta \qquad \forall T>0.
		\end{equation} 
Note that for a fixed $\ell \in \Q^+$, the events
$$
\Q^+ \ni a \mapsto \left\{ \omega : \sup_{0\le t\le \ell} |\omega (t)|>a \right\}
$$
are decreasing. Hence, by Lemma \ref{measure} (and the subsequent remark), we deduce that
$$
\lim_{a \to \infty, a \in \Q^+} \frac{\mu_{\gamma}\left( \omega : \sup_{0\le t\le \ell} |\omega(t)|>a \right)}{\mu_\gamma(\Omega)} =0 \qquad \mathbf{P} \text{-a.s.}
$$
In particular, for fixed $\ell, \eta \in \Q^+$, one can find $a \in \Q^+$ such that $\mathbf{P}$-a.s.\ we have 
\begin{equation}\label{step2}
 \frac{\mu_{\gamma}\left( \{\omega : \sup_{0\le t\le \ell}  |\omega(t)|>a \} \right)}{\mu_\gamma(\Omega)}\le\frac{\eta}{2}.
\end{equation}
Now by \eqref{as-sim} in Step 1, we have 
$$
\lim_{T \to \infty} \widehat{\mu}_{\gamma , T}\big(\big\{ \omega : \sup_{0\le t\le \ell} |\omega(t)| >a \big\}\big) = \frac{ \mu_{\gamma}\left( \left\{\omega : \sup_{0\le t\le \ell}  |\omega(t)|>a \right\} \right)}{\mu_\gamma(\Omega)}  \qquad \mathbf{P}\text{-a.s.,}
$$
for any $\ell \in \Q_+$ and corresponding $a\in \Q_+$ simultaneously. Therefore, by \eqref{step2}, $\mathbf P$-a.s., for any $\ell\in \Q_+$ and $\eta\in \Q_+$, there is $a\in \Q_+$
such that 
$$ 
\widehat{\mu}_{\gamma, T}\bigg( \bigg\{ \omega : \sup_{0\le t\le \ell} |\omega(t)|>a \bigg\} \bigg) \le \eta, 
$$
 for some $T>0$ large enough. Hence, we can choose $a\in \Q_+$ large enough which guarantees that the above inequality holds for all $T >0$. This shows \eqref{tight1}.

\medskip\noindent{\bf Step 3:} Recall the definition of $m^T(\omega,\delta)$ from \eqref{modcont}. We will now show that, $\mathbf P$ almost surely, for any $\ell, \eta, \eps\in \Q_+$, there is $\delta \in \Q_+$ such that
\begin{equation}\label{tight2}
\widehat\mu_{\gamma,T}\left( \left\{\omega \, : \,  m^\ell(\omega, \delta)> \varepsilon  \right\} \right) < \eta \qquad\forall\, T>0. 
\end{equation}
Indeed, note that for fixed $\ell, \eta, \varepsilon \in \Q^+ $, the events 
$$
 \Q^+ \ni \delta \mapsto \left\{ \omega : \sup_{0\le s, t\le \ell, |t-s|<\delta}|\omega(t)-\omega(s)|>\varepsilon \right\}
 $$ 
 are decreasing as $\delta \searrow 0$. Again by Lemma \ref{measure}, it holds that
$$
\lim_{\delta \to 0, \delta \in \Q^+} \frac{\mu_\gamma\left(\left\{ \omega : m^\ell(\omega, \delta)>\varepsilon \right\} \right)}{\mu_\gamma(\Omega)} = 0 \quad \mathbf{P}\text{-a.s.}
$$
Thus, again using \eqref{as-sim} from Step 1, $\mathbf P$-a.s., for any $\ell, \eta, \eps\in \Q_+$, there exists a $\delta \in \Q^+$ such that 
$$
 \lim_{T \to \infty} \widehat{\mu}_{\gamma, T}\left(\left\{ \omega : m^\ell(\omega, \delta)>\varepsilon \right\} \right) = \frac{\mu_\gamma\left(\left\{ \omega : m^\ell(\omega, \delta)>\varepsilon \right\} \right)}{\mu_\gamma(\Omega)} \le \frac{\eta}{2} 
 $$
Hence,  $\mathbf P$-a.s., 
$$ 
\widehat{\mu}_{\gamma, T}\left(\left\{ \omega : m^\ell(\omega, \delta)>\varepsilon \right\} \right) \le \eta, 
$$  
if $T>0$ is sufficiently large. If we now choose $\delta$ sufficiently small, we can assure that the inequality holds for all $T >0$, proving \eqref{tight2}. 

\medskip\noindent{\bf Step 4:} We will now conclude that 
 \begin{equation}\label{tight}
\mathbf P\mbox{-a.s., the family}\quad \big\{\widehat{\mu}_{\gamma, T}\big\}_{T\geq 0} \,\,\,\,\mbox{is uniformly tight}.
 \end{equation}
Fix $\eta\in \Q_+$ and also $\ell, m \in \N$. Then by \eqref{tight1} and \eqref{tight2}, $\mathbf P$-a.s., there is $a_\ell, \delta_{m,\ell}\in \Q_+$ such that for all $T>0$, 
\begin{equation}\label{tight3}
\begin{aligned}
&\widehat\mu_{\gamma,T}(A_\ell^c) \leq \frac{\eta}{2^{\ell+1}}, \qquad \widehat\mu_{\gamma,T}(B_{m,\ell}^c) \leq \frac{\eta}{2^{\ell+1+m}}, \qquad\mbox{where}\\
&A_\ell:=\big\{\omega: \sup_{0\leq t \leq \ell} |\omega(t)| >a_\ell\big\}, \qquad B_{m,\ell}=\big\{\omega: m^\ell(\omega,\delta_{m,\ell})>\frac 1m \big\}.
\end{aligned}
\end{equation}
Note that for each $\ell,m \in \N$, $A_\ell$ and $B_{m,\ell}$ are closed. Therefore, 
\begin{equation}\label{closed}
K:= \big(\cap_{\ell\in \N} A_\ell\big) \bigcap \big( \cap_{m,\ell \in \N} B_{m,\ell}\big) \subset \Omega \qquad \mbox{is also closed,}
\end{equation}
By \eqref{tight1}, the family of functions $K$ is uniformly bounded and by \eqref{tight2}, $K$ is equicontinuous, and hence, by Arzela-Ascoli theorem, $K$ is relatively compact. 
Together with \eqref{closed}, we conclude that $K\subset \Omega$ is 
compact. Finally, by \eqref{tight3}, $\mathbf P$-a.s., for any $\eta\in \Q_+$ and all $T>0$, $\widehat\mu_{\gamma,T}(K) \geq 1- \sum_{\ell\in \N} \eta/2^{\ell+1} - \sum_{\ell,m\in \N} \eta/2^{\ell+1+m}=1- \eta$. 
This proves \eqref{tight}.

\medskip\noindent{\bf Step 5:} We will now conclude the proof of the proposition. By tightness from Step 4, for each subsequence of $(\widehat{\mu}_{\gamma, T})_{T\geq 0}$ there is a further sub-subsequence converging weakly to some random measure $\widehat{\mu}_{\gamma}$. It remains to show that the limiting measure $\widehat{\mu}_{\gamma}$ is uniquely determined that is, this limit is independent of the subsequence. We will use Portmanteau's Theorem to show that for all Borel sets $A \subset \Omega$, it is true that any weak limit satisfies 
 $$
 \widehat{\mu}_{\gamma}(A)= \nicefrac{\mu_\gamma(A)}{\mu_\gamma(\Omega)}.
 $$
  More precisely, it suffices to show that this is true for all sets $A \in \mathcal{X}_s^1$ (recall \eqref{calX1}). Hence, let $A \in \mathcal{X}_s^1$, i.e. $A $ is of the form $A= \{\omega : \omega(t_1)\in A_1, \ldots, \omega(t_n) \in A_n  \}$ for some $n \in \N, t_1 < \ldots < t_n \in \Q^+$ and $A_1, \ldots, A_n \in \mathcal{A}_s$. Then by Lemma \ref{measure}, 
\begin{equation}\label{eq-sup}
	\frac{\mu_\gamma(A)}{\mu_\gamma(\Omega)} = \sup_{A_1', \ldots, A_n'} \frac{\mu_\gamma(\{ \omega : \omega(t_1)\in A_1', \ldots, \omega(t_n) \in A_n' \} )}{\mu_\gamma(\Omega)} \quad \mathbf{P}\text{-a.s.},
\end{equation}
where the supremum is over $A_1', \ldots, A_n' \in \mathcal{A}_c$ (recall \eqref{calA}) such that $A_i' \subset A_i$ for all $1 \le i \le n$. Similarly, we have
\begin{equation}\label{inf}
	\frac{\mu_\gamma(A)}{\mu_\gamma(\Omega)} = \inf_{A_1', \ldots, A_n'} \frac{\mu_\gamma(\{ \omega : \omega(t_1)\in A_1', \ldots, \omega(t_n) \in A_n' \} )}{\mu_\gamma(\Omega)} \qquad \mathbf{P}\text{-a.s.,}
\end{equation}
where the infimum is over all $A_1', \ldots, A_n' \in \mathcal{A}_o$ such that $\bar{A_i} \subset A_i'$ for all $1\le i\le n$. By  Lemma \ref{measure} and Portmanteau's Theorem, for any $A \in \mathcal{X}_s^1$ and any $A_1', \ldots, A_n' \in \mathcal{A}_o$ with $\bar{A_i}\subset A_i'$ for all $1\le i\le n$, $\mathbf{P}$-a.s., we have
\begin{flalign*}
	&\widehat{\mu}_{\gamma} ( \{\omega : \omega(t_1)\in A_1', \ldots, \omega(t_n)\in A_n' \} ) \\
	\le &\liminf_{T \to \infty} \frac{\MT( \{\omega : \omega(t_1)\in A_1', \ldots, \omega(t_n)\in A_n' \} )}{\MT(\Omega)} \\
	=&\lim_{T \to \infty}  \frac{\MT( \{\omega : \omega(t_1)\in A_1', \ldots, \omega(t_n)\in A_n' \} )}{\MT(\Omega)} \\
	= &\frac{\mu_\gamma ( \{\omega : \omega(t_1)\in A_1', \ldots, \omega(t_n)\in A_n' \} )}{\mu_\gamma(\Omega)}.
\end{flalign*}
Since $A_1', \ldots, A_n'$ are arbitrary, by taking the infimum and using (\ref{inf}), we can deduce that $\mathbf{P}$-a.s., $\widehat{\mu}_{\gamma}(A)\leq \nicefrac{\mu_\gamma(A)}{\mu_\gamma(\Omega)}$. Proceeding analogously with sets $A_1', \ldots, A_n' \in \mathcal{A}_c$ such that $A_i' \subset A_i$ for all $1 \le i \le n$ and using \eqref{eq-sup}, we obtain $\mathbf{P}$-a.s., $\widehat{\mu}_\gamma(A)\ge \nicefrac{\mu_\gamma(A)}{\mu_\gamma(\Omega)}$. Therefore, $\widehat{\mu}_{\gamma}(A)= \nicefrac{\mu_\gamma(A)}{\mu_\gamma(\Omega)}$ $\mathbf{P}$-a.s. as desired. The proof of the weak convergence for the unnormalized measures can be performed by the same methods, by noting that $\mu_\gamma(\Omega)>0$ $\mathbf{P}$-a.s.
\end{proof}

We can now complete the

\begin{proof}[{\bf Proof of existence Theorem \ref{thm-existence}.}]
By Lemma \ref{conv}, for each Borel set $A\subset \Omega$,  the sequence $(\widehat{\mu}_{\gamma, T}(A))$ converges $\mathbf P$-a.s. to $\nicefrac{\mu_\gamma(A)}{\mu_\gamma(\Omega)}$ as $T \to \infty$. Hence, by Proposition \ref{prop-weak-conv}, $\widehat{\mu}_{\gamma, T}$ converges weakly $\mathbf{P}$-a.s. to the probability measure $\widehat{\mu}_{\gamma}$. This completes the existence part of the proof of Theorem \ref{thm-existence}. 

\end{proof}
	

\subsection{Thick points, support and characterization.}\label{sec-proof-uniqueness}
We turn to the proofs of the second and the third parts of Theorem \ref{thm-existence}. We start with the third part, which will be shown in 

\begin{prop}[Characterization of $\mu_\gamma$]\label{thm-uniqueness-liouville}
	Fix $d\geq 3$, $\gamma\in (0,\gamma_c)$ and a mollifier $\phi$, Then the (unnormalized) GMC measure $\mu_\gamma= \mu_{\gamma,\phi}$ is the unique measure such that the law of ${\dot B} $ under $\Q_{\mu_{\gamma,\phi}}$ (recall the notation \eqref{def-Q-nu}) is the same as the law of the Schwartz distribution \begin{equation*}
		{{\dot B} }_\phi(f)={\dot B} (f)+\gamma \int_{\R_+\times \R^d}f(s,y)\phi(\omega_s-y)\d s \d y, \qquad f \in \mathcal S(\R_+\times \R^d)
	\end{equation*}
	under $P\otimes \P_0$. In other words, $\mu_{\gamma,\phi}$ is the unique measure satisfying  
	\begin{equation}\label{eq-unique}
	\E^P\bigg[\int_\Omega \mu_{\gamma,\phi}(\d\omega) F(\dot B, \omega)\bigg]= \E^{P\otimes\P_0}\big[F(\dot B_\phi, \omega)\big].
	\end{equation}
	for any bounded measurable function $F: \Omega \times \mathcal E \mapsto \R$. 
	\end{prop}

\begin{proof} Recall that ${H}_T(\omega) = {H}_T(\omega, {\dot B} )$ where ${\dot B} $ is the space-time white noise. Similarly, we write $\mu_{\gamma}(\d\omega) = \mu_{\gamma}(\d\omega, {\dot B} )$. 
Given $T>0$, set (recall the notation from \eqref{def-Q-nu})
$$
\Q_{\mu_{\gamma,T}}(\d {\dot B} , \d \omega) {=} \exp \left\{ \gamma {H}_T(\omega, {\dot B} ) - \frac{\gamma^2}{2}T(\phi \star \phi)(0) \right\} \P_0(\d \omega) P(\d {\dot B} ).
$$
This is a probability measure since $E \left[ \exp \left\{ \gamma {H}_T(\omega, {\dot B} ) - \frac{\gamma^2}{2}T(\phi \star \phi)(0) \right\} \right] = 1$. Similarly define 
$$
\Q_{\mu_{\gamma}}(\d {\dot B} , \d \omega) {=} \mu_\gamma(\d \omega, {\dot B} ) P(\d {\dot B} ).
$$
 Before continuing, we show that 
\begin{lemma}
	$\Q_{\mu_{\gamma,T}} \to \Q_{\mu_{\gamma}}$ weakly as $T \to \infty$. 
\end{lemma}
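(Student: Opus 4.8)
The plan is to reduce the statement to the fiberwise weak convergence $\mu_{\gamma,T}(\cdot,\dot B)\to\mu_\gamma(\cdot,\dot B)$ on $\Omega$ that is already provided by Theorem \ref{thm-existence}, and then to integrate this out over the environment using the martingale properties of the total mass. Fix a bounded continuous function $F\colon \mathcal S^\prime(\R_+\times\R^d)\times\Omega\to\R$; by the definition of weak convergence it suffices to prove $\int F\,\d\Q_{\mu_{\gamma,T}}\to\int F\,\d\Q_{\mu_\gamma}$. Writing out the disintegrations $\Q_{\mu_{\gamma,T}}(\d\dot B,\d\omega)=\mu_{\gamma,T}(\d\omega,\dot B)\,P(\d\dot B)$ and $\Q_{\mu_\gamma}(\d\dot B,\d\omega)=\mu_\gamma(\d\omega,\dot B)\,P(\d\dot B)$, I would set
\[
g_T(\dot B):=\int_\Omega F(\dot B,\omega)\,\mu_{\gamma,T}(\d\omega,\dot B),\qquad g(\dot B):=\int_\Omega F(\dot B,\omega)\,\mu_\gamma(\d\omega,\dot B),
\]
so that $\int F\,\d\Q_{\mu_{\gamma,T}}=\E^P[g_T]$ and $\int F\,\d\Q_{\mu_\gamma}=\E^P[g]$. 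The task is thus to show $\E^P[g_T]\to\E^P[g]$.

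First I would work fiberwise. For $P$-almost every $\dot B$, Theorem \ref{thm-existence} gives $\mu_{\gamma,T}(\cdot,\dot B)\to\mu_\gamma(\cdot,\dot B)$ weakly on $\Omega$; since $\omega\mapsto F(\dot B,\omega)$ is bounded and continuous for each fixed $\dot B$, this immediately yields the pointwise convergence $g_T(\dot B)\to g(\dot B)$ for $P$-a.e. $\dot B$. It then remains to interchange this limit with the expectation $\E^P[\cdot]$. For this I would use the crude bound $|g_T(\dot B)|\le \|F\|_\infty\,\mu_{\gamma,T}(\Omega,\dot B)$, together with the fact that, since $\gamma<\gamma_c$, the martingale $(\mu_{\gamma,T}(\Omega))_{T>0}$ is uniformly integrable by the very definition of $\gamma_c$ in \eqref{def-gammac}. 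A family dominated in absolute value by a uniformly integrable family is itself uniformly integrable: on the event $\{|g_T|>M\}$ one has $\mu_{\gamma,T}(\Omega)>M/\|F\|_\infty$, whence
\[
\E^P\big[|g_T|\,\1_{\{|g_T|>M\}}\big]\le \|F\|_\infty\,\sup_T\E^P\big[\mu_{\gamma,T}(\Omega)\,\1_{\{\mu_{\gamma,T}(\Omega)>M/\|F\|_\infty\}}\big],
\]
and the right-hand side tends to $0$ as $M\to\infty$ by uniform integrability of $(\mu_{\gamma,T}(\Omega))_{T>0}$. Hence $(g_T)_{T>0}$ is uniformly integrable.

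Combining the $P$-a.s. convergence $g_T\to g$ with this uniform integrability, Vitali's convergence theorem gives $g_T\to g$ in $L^1(P)$, and in particular $\E^P[g_T]\to\E^P[g]$, which is exactly the claimed weak convergence. The main obstacle is precisely this interchange of the $T\to\infty$ limit with the environment average: one must rule out loss of mass at infinity, and the only genuine input beyond bookkeeping is the uniform-integrability estimate above, which is where the weak-disorder hypothesis $\gamma<\gamma_c$ enters in an essential way. The fiberwise weak convergence itself is free, being handed to us as an \emph{almost sure} (rather than merely in-probability) statement by Theorem \ref{thm-existence}, and it is this almost sure form that makes the fiberwise limit available for $P$-a.e.\ $\dot B$ and hence integrable.
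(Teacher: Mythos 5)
Your proof is correct, but it takes a genuinely different route from the paper's. The paper follows its general compactness methodology: it first proves that the family $(\Q_{\mu_{\gamma,T}})_{T\geq 0}$ is tight on the product space, by checking tightness of the two marginals -- the path-space marginal via the criterion of Lemma \ref{tight}, exactly as in Proposition \ref{prop:weak-conv}, and the environment marginal, which is identically $P$ and is tight because $\mathcal{S}'$ is $\sigma$-compact -- and then identifies the subsequential limits by ``a similar line of arguments'' as in Theorem \ref{thm-existence}. You instead verify the definition of weak convergence directly, with no tightness and no subsequence extraction: you disintegrate over the environment, use the $\mathbf{P}$-a.s.\ fiberwise weak convergence $\mu_{\gamma,T}(\cdot,\dot B)\to\mu_\gamma(\cdot,\dot B)$ from Theorem \ref{thm-existence} to get $g_T\to g$ pointwise, dominate $|g_T|$ by $\|F\|_\infty\,\mu_{\gamma,T}(\Omega,\dot B)$, and convert the uniform integrability of the total-mass martingale (which is exactly the weak-disorder hypothesis $\gamma<\gamma_c$ via \eqref{def-gammac}) into uniform integrability of $(g_T)_{T>0}$, so that Vitali's theorem closes the argument. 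What your approach buys is self-containedness and robustness: it sidesteps Prokhorov-type compactness on the non-metrizable space $\mathcal{S}'$, identifies the limit in one stroke rather than through a separate identification step, and, as you correctly emphasize, leans only on the almost-sure (not merely in-probability) form of the convergence in Theorem \ref{thm-existence}, which the paper indeed provides. What the paper's route buys is uniformity with its other proofs and tightness of the joint law as a by-product. The only bookkeeping point you leave implicit is the measurability of $g$, needed for Vitali; this is immediate since $g$ is the $P$-a.e.\ limit of the measurable functions $g_T$ and the underlying probability space is complete. Both arguments are valid proofs of the lemma.
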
 
\begin{proof}
	 We first prove first that $(\Q_{\mu_{\gamma,T}})_{T\geq 0}$ is tight. It is enough to verify that the marginals 
	$$
	\begin{aligned}
	&\Q_{\mu_{\gamma,T}}^{1}(\d\omega):=\Q_{\mu_{\gamma,T}}(\d\omega \times \mathcal{S}')=\P_0(\d\omega)\quad\mbox{and} \\
	& \Q_{\mu_{\gamma,T}}^2(\d\dot B):=\Q_{\mu_{\gamma,T}}(\Omega\times \d\dot B)=\mu_{\gamma,T}(\Omega) P(\d\dot B) \quad\mbox{are tight}.
	\end{aligned}
	$$
	 Clearly $\Q_{\mu_{\gamma,T}}^{1}$ is tight since it is a single probability measure. On the other hand, since $\mathcal{S}'$ is $\sigma$-compact (by the Banach-Alaoglu theorem) and $\Q_{\mu_{\gamma,T}}(\Omega\times \mathcal{S}')$ is uniformly bounded in $T$ by the uniform integrability of $(\mu_{\gamma,T}(\Omega))_{T\geq 0}$, then $\Q_{\mu_{\gamma,T}}^{2}$ is also tight. Therefore, $\Q_{\mu_{\gamma,T}}$ converges weakly to some measure $\Q$ as $T\to\infty$. We claim that $\Q=\Q_{\mu_{\gamma}}$. Indeed, for Borel subsets $A_1\subset \Omega, A_2\subset \mathcal{S}'$, it holds that $(\mu_{\gamma,T}(A_1))_{T\geq 0}$ is uniformly integrable and $\mu_{\gamma,T}(A_1)\to \mu_{\gamma}(A_1)$ $P$-a.s. and in $L^1$. Therefore, \begin{equation*}
	 	\big|\Q_{\mu_{\gamma,T}}(A_1\times A_2)-\Q_{\mu_{\gamma}}(A_1\times A_2)\big| \leq \int_{A_2} \big|\mu_{\gamma,T}(A_1,\dot{B})-\mu_{\gamma}(A_1,\dot{B})\big|P(\d \dot{B})\to 0
	 \end{equation*}
	 as $T\to \infty$. Thus, for any Borel sets $A_1\subset \Omega$ and $A_2\subset \mathcal{S}'$, $\lim_{T\to\infty}\Q_{\mu_{\gamma,T}}(A_1\times A_2)=\Q_{\mu_{\gamma}}(A_1\times A_2)$. By Portmanteau's theorem, we deduce that $\Q=\Q_{\mu_{\gamma}}$.
	 \end{proof}

We now continue with the proof of Proposition \ref{thm-uniqueness-liouville}. By the previous lemma, if $n \in \N, f_1, \ldots, f_n \in \mathcal{S}(\R_{+}\times \R^d)$ and $g : \Omega \mapsto \R$ is bounded and continuous, i.e the map $(\omega, {\dot B} ) \mapsto g(\omega){\dot B} (f_1)\cdot \ldots \cdot {\dot B} (f_n)$ is a bounded continuous function on $\Omega \times \mathcal{S}'$, then
\begin{equation}\label{weak2}
	\begin{aligned}
		&\lim_{T \to \infty} E \left[ {\dot B} (f_1)\cdot \ldots \cdot {\dot B} (f_n) \int_{\Omega} g(\omega)\exp \left\{ \gamma {H}_T(\omega, {\dot B} ) - \frac{\gamma^2}{2}T(\phi \star \phi)(0) \right\} \P_0(\d \omega) \right] \\
	= & E \left[  {\dot B} (f_1)\cdot \ldots \cdot {\dot B} (f_n) \int_{\Omega} g(\omega) \mu_{\gamma}(\d \omega, {\dot B} ) \right].
	\end{aligned}
\end{equation}
Now, conditioning $\Q_{\mu_{\gamma,T}}(\d {\dot B} , \d \omega)$ on $\omega \in \Omega$, we obtain
\begin{equation}\label{eq-cameron-martin-girsanov}
	\Q_{\mu_{\gamma,T}}(\d {\dot B}  \, | \, \omega) =\exp\left\{ \gamma {H}_T(\omega, {\dot B} ) - \frac{\gamma^2}{2}T(\phi \star \phi)(0) \right\}P(\d{\dot B} ).
\end{equation}
By the Cameron-Martin-Girsanov Theorem, we know that under the measure $\Q_{\mu_{\gamma,T}}(\d {\dot B}  \, | \, \omega)$, $({\dot B} (f))_f$ is a Gaussian process with the same covariance structure as in (\ref{cov}) and mean given by
$$
\int {\dot B} (f) \d \Q_{\mu_{\gamma,T}}(\d {\dot B}  \, | \, \omega) = \gamma \cov\left({\dot B} (f), {H}_T(\omega) \right) = \gamma \int_{\R_{+}\times \R^d}{\1}_{[0,T]}(s)f(s,y)\phi(\omega_s - y)\d s \d y.
$$

Hence, the expression in \eqref{weak2} is equal to
\begin{flalign*}
	&\lim_{T \to \infty} \int_{\Omega}g(\omega) \int {\dot B} (f_1)\cdot \ldots \cdot {\dot B} (f_n) \d \Q_{\mu_{\gamma,T}}(\d {\dot B}  , \omega) \P_0(\d \omega) \\
	=&\lim_{T \to \infty}  \int_{\Omega}g(\omega) \int \prod_{i=1}^{n} \left( {\dot B} (f_i) + \gamma\cov\left({\dot B} (f_i), {H}_T(\omega) \right) \right)  P(\d {\dot B} ) \P_0(\d \omega)\\
	=&\lim_{T \to \infty} \int_{\Omega}g(\omega) \int \prod_{i=1}^{n} \left( {\dot B} (f_i) +  \gamma \int_{\R_{+}\times \R^d}{\1}_{[0,T]}(s)f_i(s,y)\phi(\omega_s - y)\d s \d y \right)  P(\d {\dot B} ) \P_0(\d \omega).
\end{flalign*}
Writing $\Q_{\P_0}(\d {\dot B} , \d \omega) {=} \P_0(\d \omega) P(\d {\dot B} )$, and using (\ref{weak2}), we deduce that
\begin{flalign*}
	&\int g(\omega) \prod_{i=1}^{n} \left( {\dot B} (f_i) +  \gamma \int_{\R_{+}\times \R^d}f_i(s,y)\phi(\omega_s - y)\d s \d y \right) \Q_{\P_0}(\d {\dot B} , \d \omega) \\
	=& \int g(\omega) \prod_{i=1}^n{\dot B} (f_i) \Q_{\mu_\gamma}(\d {\dot B} , \d \omega).
\end{flalign*}
Since $g, f_1, \ldots, f_n$ are arbitrary, we conclude that the law of ${\dot B} $ under $\Q_{\mu_\gamma}$ is the same as the law of $\widetilde{{\dot B} }$ defined by $\widetilde{{\dot B} }(f) {:=} {\dot B} (f) + \gamma \int_{R_{+}\times \R^d} f(s,y) \phi (\omega_s - y) \d s \d y$ under $\Q_{\P_0}$. The uniqueness is immediate from this construction. This completes the proof of Proposition \ref{thm-uniqueness-liouville}. 
\end{proof}

Finally, the second part of Theorem \ref{thm-existence} will be shown in 

\begin{prop}\label{thm-thickness}
Fix $d\geq 3$ and $\gamma \in (0,\gamma_c)$ and let $\mu_\gamma$ be the infinite volume measure. Then, 
	\begin{equation*}
	\mu_\gamma\bigg\{\omega\in\Omega: \lim_{T\to\infty} \frac {H_T(\omega)}{T (\phi\star \phi)(0)} \ne \gamma \bigg\}=0 \qquad\mathbf P\mbox{-a.s.}
	\end{equation*}
\end{prop}
\begin{proof}
First recall that, for every $\omega\in \Omega$, the stochastic integral $H_T(\omega)= \int_0^T \int_{\R^d} \phi(\omega_s-y) \dot B(s,y) \d s \d y$ is a continuous martingale  with quadratic variation 
$$
\langle H \rangle_T= T (\phi \star \phi)(0) \to \infty \quad\mbox{as $T\to\infty$.}
$$
It follows that, for every $\omega\in \Omega$, 
\begin{equation*}
	\lim_{T \to \infty} \frac{{H}_T(\omega)}{T} = 0\quad \mathbf P\text{-a.s.}.\qquad  \footnote{Here we are using the fact that for a continuous martingale $(M_T)_T$ vanishing at $0$, if $\langle M\rangle_T \to \infty$ as $T\to\infty$  a.s., then 
	$\frac{M_T}{\langle M\rangle_T} \to 0$ a.s. as $T\to\infty$, see \cite[Cor. 1, p. 144]{LS89}.}
	\end{equation*}
Next, by Theorem \ref{thm-uniqueness-liouville}, we know that the law of ${H}_T$ under the measure $\Q_{\mu_\gamma}(\d{\dot B} ,\d\omega)=\mu_\gamma(\d\omega,{\dot B} )P(d{\dot B} )$ is the same as the law of ${H}_T + \gamma T (\phi \star \phi)(0)$ under $P\otimes \P_0$. Combining these two facts, we have 
$$
\lim_{T \to \infty} \frac{{H}_T}{T} = \gamma (\phi \star \phi)(0) \quad \Q_{{\color{purple}\mu_\gamma}}\text{-a.s.},
$$
so that $\mu_{\gamma}\left(\lim_{T \to \infty} \frac{{H}_T}{T} \neq \gamma (\phi \star \phi)(0) \right)=0$ $\mathbf{P}$-a.s. This completes the proof of Proposition \ref{thm-thickness}, and therefore, that of 
Theorem \ref{thm-existence}. 
\end{proof} 

We will end this section with

\begin{prop}\label{prop-nonunique}
Let $\phi$ and $\phi^\prime$ be two mollifiers. 
If $\phi(\cdot)\not\equiv \phi^\prime(\cdot)$, then $\mu_{\gamma,\phi}\neq \mu_{\gamma,\phi^\prime}$. 
\end{prop}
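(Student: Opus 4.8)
The plan is to show that the entire random measure $\mu_{\gamma,\phi}$ remembers the full profile of $\phi$, not merely the diagonal value $(\phi\star\phi)(0)$, by correlating the limiting measure with a linear functional of the white noise and reading off $\phi$ from the resulting observable. Suppose, towards a contradiction, that $\mu_{\gamma,\phi}=\mu_{\gamma,\phi'}$ holds $\mathbf P$-a.s. Then for every $F\in C_b(\Omega)$ and every smooth $g$ supported in $[0,T_0]\times\R^d$ the quantity
$$
I_\phi(F,g):=\mathbf E\bigg[\dot B(g)\int_\Omega F(\omega)\,\mu_{\gamma,\phi}(\d\omega)\bigg]
$$
must satisfy $I_\phi(F,g)=I_{\phi'}(F,g)$. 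I will compute $I_\phi$ explicitly and see that it determines $\phi$, which forces $\phi=\phi'$ and gives the contradiction.

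First I would evaluate the approximating observable at finite volume. Fix $T\ge T_0$. For fixed $\omega$ the pair $(\dot B(g),H_T(\omega))$ is jointly centered Gaussian with covariance $\langle g,f^T_\omega\rangle_{L^2}$, where $f^T_\omega(s,y)=\1_{[0,T]}(s)\,\phi(\omega_s-y)$. Applying Gaussian integration by parts to $\Psi(y)=\exp\big(\gamma y-\tfrac{\gamma^2}2 T(\phi\star\phi)(0)\big)$, which satisfies $\mathbf E[\Psi(H_T(\omega))]=1$ and $\Psi'=\gamma\Psi$, gives
$$
\mathbf E\Big[\dot B(g)\,e^{\gamma H_T(\omega)-\frac{\gamma^2}2 T(\phi\star\phi)(0)}\Big]=\gamma\,\langle g,f^T_\omega\rangle_{L^2}=\gamma\int_0^{T_0}\!\!\int_{\R^d}g(s,y)\,\phi(\omega_s-y)\,\d y\,\d s .
$$
Setting $G_\phi(\omega):=\int_0^{T_0}\int_{\R^d}g(s,y)\phi(\omega_s-y)\,\d y\,\d s$ and using Fubini, this yields
$$
\mathbf E\bigg[\dot B(g)\int_\Omega F(\omega)\,\mu_{\gamma,\phi,T}(\d\omega)\bigg]=\gamma\int_\Omega F(\omega)\,G_\phi(\omega)\,\P_0(\d\omega),
$$
an expression that does not depend on $T$.

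Next I would pass to the limit $T\to\infty$ to identify $I_\phi(F,g)$ with the same right-hand side. By Theorem \ref{thm-existence} the weak convergence $\mu_{\gamma,\phi,T}\to\mu_{\gamma,\phi}$ holds $\mathbf P$-a.s., so $\dot B(g)\int_\Omega F\,\d\mu_{\gamma,\phi,T}\to\dot B(g)\int_\Omega F\,\d\mu_{\gamma,\phi}$ a.s. To move the limit inside $\mathbf E$ I would invoke uniform integrability: bounding $\big|\int_\Omega F\,\d\mu_{\gamma,\phi,T}\big|\le\|F\|_\infty\,\mu_{\gamma,\phi,T}(\Omega)$ and combining the $L^p$-boundedness ($p>1$) of the total-mass martingale underlying Theorem \ref{prop-Lp-moments} (via Doob's inequality) with the Gaussian tails of $\dot B(g)$, H\"older's inequality makes the family $\{\dot B(g)\int_\Omega F\,\d\mu_{\gamma,\phi,T}\}_T$ bounded in $L^r$ for some $r>1$, hence uniformly integrable. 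Therefore $I_\phi(F,g)=\gamma\int_\Omega F(\omega)\,G_\phi(\omega)\,\P_0(\d\omega)$.

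Finally I would extract $\phi$. Under the contradiction hypothesis, $\int_\Omega F(\omega)\big[G_\phi(\omega)-G_{\phi'}(\omega)\big]\,\P_0(\d\omega)=0$ for all admissible $F,g$. Choosing $g(s,y)=\alpha(s)\psi(y)$ gives $G_\phi(\omega)=\int_0^{T_0}\alpha(s)\,(\phi\star\psi)(\omega_s)\,\d s$; taking $F(\omega)=h(\omega_{s_0})$ for a fixed $s_0\in(0,T_0)$ and letting $\alpha$ run through an approximate identity at $s_0$, dominated convergence (the maps $\phi\star\psi,\phi'\star\psi$ being bounded and continuous) yields
$$
\int_{\R^d}h(x)\,\big[(\phi\star\psi)(x)-(\phi'\star\psi)(x)\big]\,\rho_{s_0}(x)\,\d x=0\qquad\text{for all }h\in C_b(\R^d),
$$
where $\rho_{s_0}>0$ is the (everywhere positive) Gaussian density of $\omega_{s_0}$. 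Positivity of $\rho_{s_0}$ forces $(\phi-\phi')\star\psi\equiv0$ for every $\psi$, and letting $\psi$ tend to a Dirac mass (equivalently, passing to Fourier transforms) gives $\phi\equiv\phi'$, contradicting $\phi\not\equiv\phi'$. I expect the limit-exchange in the third step to be the main obstacle: weak convergence of the random measures only controls bounded continuous test functions of the path, so the uniform integrability furnished by the weak-disorder moment bounds is exactly what is needed in order to legitimately correlate the limiting measure with the unbounded noise functional $\dot B(g)$.
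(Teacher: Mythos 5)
Your proof is correct, but it takes a genuinely different route from the paper's. The paper exploits the martingale closure property: since $(\mu_{\gamma,T,\phi}(A))_T$ is a uniformly integrable martingale, $\mathbf{E}[\mu_{\gamma,\phi}(A)\,|\,\mathcal F_T]=\mu_{\gamma,T,\phi}(A)$, so a.s.\ equality of the limiting measures forces equality of the finite-volume densities for all $T$; after disposing of the case $(\phi\star\phi)(0)\neq(\phi'\star\phi')(0)$ via Corollary \ref{cor-singular}, this yields $H_{T,\phi}=H_{T,\phi'}$ $\mathbf P\otimes\P_0$-a.s., and the conclusion follows from the quadratic variation of $H_{T,\phi}-H_{T,\phi'}$ together with Cauchy--Schwarz and the normalization $\int\phi=\int\phi'=1$. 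You instead correlate the limiting measure with the linear noise functional $\dot B(g)$, compute $\mathbf E\big[\dot B(g)\int F\,\d\mu_{\gamma,\phi}\big]=\gamma\int F\,G_\phi\,\d\P_0$ by Gaussian integration by parts at finite volume plus a uniform-integrability limit exchange, and then recover $\phi$ pointwise by test functions and an approximate identity. Each approach has its trade-offs: the paper's argument needs nothing beyond $L^1$/uniform-integrability martingale theory and is shorter, whereas yours requires the uniform-in-$T$ positive moment bound $\sup_T\mathbf E[\mu_{\gamma,T}(\Omega)^p]<\infty$ for some $p>1$ (this is exactly what the proof of Theorem \ref{prop-Lp-moments} supplies, and there is no circularity in invoking it, even though it appears later in the paper); in return, your argument handles both cases at once without appealing to Corollary \ref{cor-singular}, and it actually establishes a stronger, quantitative linear-response identity --- essentially a scalar shadow of the uniqueness Theorem \ref{thm-uniqueness-liouville}, whose proof via Cameron--Martin--Girsanov performs the very same covariance computation --- showing that the correlation functional $I_\phi$ determines the full profile of $\phi$, not merely $(\phi\star\phi)(0)$. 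Your identification of the limit exchange as the main obstacle, and its resolution via H\"older with the $L^p$ bound against the Gaussian tails of $\dot B(g)$, is exactly right: weak convergence alone controls only bounded continuous observables of the path, not the unbounded noise factor.
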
 
\begin{proof}
	Since for any $A\subset \Omega$ Borel measurable,  $(\mu_{\gamma,T,\phi}(A))_{T\geq 0}$ and $(\mu_{\gamma,T,\phi'}(A))_{T\geq 0}$ are uniformly integrable martingales converging respectively to $\mu_{\gamma,\phi}(A)$ and $\mu_{\gamma,\phi'}(A)$, we have $\mathbf{P}$-a.s. the following identities:\begin{align*}
		\mathbf{E}[\mu_{\gamma,\phi}(A)|\mathcal{F}_T]&=\mu_{\gamma,T,\phi}(A),\\
		\mathbf{E}[\mu_{\gamma,\phi'}(A)|\mathcal{F}_T]&=\mu_{\gamma,T,\phi'}(A).
	\end{align*}
	Thus, if $\mu_{\gamma,\phi}(A)=\mu_{\gamma,\phi'}(A)$ $\mathbf{P}$-a.s., then we deduce that for all $T\geq 0$, $\mu_{\gamma,T,\phi}(A)=\mu_{\gamma,T,\phi'}(A)$ $\mathbf{P}$-a.s. By choosing an appropriate countable collection of Borel sets $A\in \Omega$ (such as the sets $\mathcal{X}^1_s$ that appear in the proof of Proposition \ref{prop-weak-conv}, one can deduce that $\mathbf{P}\otimes\P_0$-a.s. for all $T\in \Q^+$ and assuming that $\phi\star\phi(0)=\phi'\star\phi'(0)$,\begin{equation*}
		\e^{\gamma H_{T,\phi}-\frac{\gamma^2}{2}T\phi\star\phi(0)}=\e^{\gamma H_{T,\phi'}-\frac{\gamma^2}{2}T\phi'\star\phi'(0)}=\e^{\gamma H_{T,\phi'}-\frac{\gamma^2}{2}T\phi\star\phi(0)},
	\end{equation*}
	concluding that $\mathbf{P}\otimes \P_0$-a.s., $H_{T,\phi}=H_{T,\phi'}$. Note that that for $\omega\in \Omega$, the quadratic variation of $H_{T,\phi}(\omega)-H_{T,\phi'}(\omega)$ is equal to \begin{equation*}
		2T\phi\star\phi(0)-2T\int_{\R^d}\phi(y)\phi'(y)\d y=2T\left(\int_{\R^d}\phi(y)^2\d y-\int_{\R^d}\phi(y)\phi'(y)\d y\right).
	\end{equation*}
	By Cauchy-Schwarz inequality, we know that the last display is equal to zero if and only if $\phi=\lambda \phi'$ for some $\lambda>0$, but using that $\int_{\R^d}\phi(y)\d y=\int_{\R^d}\phi'(y)\d y=1$, we deduce that $\lambda=1$, so $\phi=\phi'$. 
	\end{proof} 
	
	\begin{remark}\label{rem-nonunique}
	Looking at \eqref{null} in Theorem \ref{thm-existence}, one may wonder if for a fixed $\phi$, $\mu_{\gamma,\phi}$ is the unique measure such that 
\begin{equation}\label{eq-eq1}
	\mu_{\gamma,\phi}\left( \lim_{T \to \infty} \frac{{H}_T(\omega)}{T} \neq \gamma (\phi \star \phi)(0) \right ) = 0 \qquad \bf P\text{-a.s.}
	\end{equation}
	However, this is false: by  perturbing $\P_0$ by $\e^{\gamma (H_T+H_1)-\frac{\gamma^2}{2}\phi\star \phi(0)(T+1+2(T\wedge 1))}$ instead of by $\e^{\gamma H_T-\frac{\gamma^2}{2}T\phi\star \phi(0)}
$ leads to a measure $\tilde{\mu}_{\gamma,\phi}$ (at least for $\gamma$ small enough so that the limiting measure also exists) such that \eqref{eq-eq1} still holds. Indeed, one can follow the proof of Theorem \ref{thm-uniqueness-liouville} and notice that the law of $\dot B$ under $\Q_{\tilde{\mu}_{\gamma,\phi}}$ is the same as the law of 
	\begin{equation*}
		{{\dot B} }_\phi(f)={\dot B} (f)+\gamma\int_{\R_+\times \R^d}f(s,y)\phi(\omega_s-y)\d s \d y+\gamma\int_{[0,1]\times \R^d}f(s,y)\phi(\omega_s-y)\d s \d y, \qquad f \in \mathcal S^\prime(\R_+\times \R^d)
	\end{equation*}
	under $P\otimes \P_0$. In particular, $\mu_{\gamma,\phi}\neq \tilde{\mu}_{\gamma,\phi}$. Nevertheless, the distribution of $(H_T)_{T\geq1}$ under $\Q_{\tilde{\mu}_{\gamma,\phi}}$ is the same as the distribution of $(H_T+\gamma(T+1)\phi\star\phi(0))_{T\geq 1}$ under under $P\otimes \P_0$. Thus, $\tilde{\mu}_{\gamma,\phi}$ also satisfies \eqref{eq-eq1}.
	\end{remark}

\section{Volume decay and H\"older exponents.}\label{sec-proof-thm-decay}

\subsection{Proof of Theorem \ref{thm-exp-decay}.}


Recall the definition of $(\Omega_0, \|\cdot\|_w)$ from Section \ref{sec-results-decay}. The proof of Theorem \ref{thm-exp-decay} is split in four parts. First, we will show the (uniform on $\eta\in \Omega_0$) upper bound, stated as 

\begin{prop}\label{prop-ub}
Given $\gamma\in(0,\gamma_c)$ and $w$ satisfying \eqref{eq-g-cond}, there exists $r_0>0$ such that for all $r\in (0,r_0)$, there is an explicit constant $C_1\in (0,\infty)$ (defined in \eqref{eq-C1-def}) such that
  \begin{equation}\label{eq-prop-ub}
	\begin{aligned}
		& \limsup_{\eps\to 0}\eps^2\sup_{\eta\in \Omega_0}\log \widehat{\mu}_\gamma(\|\omega-\eta\|_w<r \eps )\leq -C_1.
		\end{aligned}
		\end{equation}
\end{prop}

\begin{proof} We will prove this result in four steps.

\noindent{\bf Step 1:} Since \begin{equation}\label{normalize}
	\eps^2\log \widehat{\mu}_\gamma(\|\omega-\eta\|_w<r\eps )=\eps^2\log \mu_{\gamma}(\|\omega-\eta\|_w<r\eps )-\eps^2\log \mu_{\gamma}(\Omega) \qquad \forall \eta\in \Omega_0, 
\end{equation}
and since $\gamma<\gamma_c$, we know that $\mathbf{P}$-a.s., when letting $\eps \to 0$, the second term vanishes. So we consider from now on only the asymptotic behavior of the first term
$\eps^2\log \mu_{\gamma}(\|\omega-\eta\|<r\eps )$. 
Let us set \begin{equation}\label{eq-eps-norm-def}
	\|f\|_{w,\eps}:=\sup_{0<t<\eps^{-2}}\frac{|f(t)|}{w(t)}.
\end{equation}
Using that for every $\eps>0$, $\mu_\gamma= \lim_{s\to\infty} \mu_{\gamma,s+ \eps^{-2}}(\cdot)$ and the monotonicity 
\begin{equation}\label{mon}
\big\{\|\omega-\eta\|_w<r\eps \}\subset \{\|\omega-\eta\|_{w,\eps }<r\eps \big\},
\end{equation}
we obtain 
\begin{equation}\label{eq-eq4}
	\begin{aligned}
		\log \mu_\gamma(\| \omega-\eta\|<r \eps )&=\limsup_{S\to\infty}\log \mu_{\gamma,S+\eps^{-2}}(\| \omega-\eta\|_w<r \eps )\\
		&\leq \sup_{S\geq 0}\log \mu_{\gamma,S+\eps^{-2}}(\| \omega-\eta\|_{w,\eps }<r \eps ) \\
		&= \sup_{S\geq 0}\log \E_0\bigg[\e^{\gamma H_{S+\eps^{-2}}(\omega) - \frac{\gamma^2}2 (S+ \eps^{-2}) (\phi\star \phi)(0)} \, \1_{\|\omega- \eta\|_{w,\eps} < r \eps}\bigg]	\\
		& \leq \sup_{S \geq 0} \log \E_0\left[\e^{\gamma H_{\eps^{-2}}(\omega)-\frac{\gamma^2}{2}\eps^{-2}\phi\star\phi(0)} \mu_{\gamma,S}(\Omega)\circ \theta_{\eps^{-2},\omega_{\eps^{-2}}}\,  \mathbbm{1}_{\| \omega-\eta\|_{w,\eps} <r \eps } \right]
		\\&	\leq \log \E_0\left[\e^{\gamma H_{\eps^{-2}}(\omega)-\frac{\gamma^2}{2}\eps^{-2}\phi\star\phi(0)}\sup_{S\geq 0} \big(\mu_{\gamma,S}(\Omega)\circ \theta_{\eps^{-2},\omega_{\eps^{-2}}}\big) \mathbbm{1}_{\| \omega-\eta\|_{w,\eps} <r \eps } \right],\\	\end{aligned}
\end{equation}
where in the penultimate step we used the Markov property (upon conditioning on the $\sigma$-algebra generated by the Brownian path until time $\eps^{-2}$; we remind the reader that $\theta_{t,x}$ denotes the time-space shift on the noise $\dot B$).

\noindent{\bf Step 2:} Now, since $\gamma<\gamma_c$, by Part (i) of Theorem \ref{prop-Lp-moments} and Doob's maximal inequality (applied to the Martingale $(\mu_{\gamma,S}(\Omega))_{S\geq 0}$), 
\begin{equation}\label{eq-p0-def}
	\sup_{S\geq 0}\mu_{\gamma,S}(\Omega)\in L^{p_0}(\mathbf{P})\qquad \mbox{for some }p_0>1.
\end{equation}
 For any $q,\ell>1$ such that $\frac{1}{p_0}+\frac{1}{q}+\frac{1}{\ell}=1$, we apply H\"{o}lder's inequality to \eqref{eq-eq4}, so that

  \begin{equation}\label{eq-eq3}
		\begin{aligned}
		\log \mu_\gamma\big(\| \omega-\eta\|_w<r \eps\big )
			&\leq \frac{1}{q}\log \E_0\left[\e^{q\gamma H_{\eps^{-2}}(\omega)-\frac{q}{2}\gamma^2 \eps^{-2}\phi\star\phi(0)}\right] \\
			&\qquad +\frac{1}{\ell}\log \P_0(\| \omega-\eta\|_\eps <r \eps ) \\
			&\qquad +\frac{1}{p_0}\log \E_0\left[\sup_{S\geq 0} \big(\mu_{\gamma,S}^{p_0}(\Omega)\circ \theta_{\eps^{-2},\omega_{\eps^{-2}}}\big)\right]\\
		&=\frac{1}{q}\log \E_0\left[\e^{q\gamma H_{\eps^{-2}}(\omega)-\frac{q^2}{2}\gamma^2{\eps^{-2}}\phi\star\phi(0)}\right] \\
		&\qquad +\frac{1}{\ell}\log \P_0(\| \omega-\eta\|_{w,\eps} <r \eps )\\
		&\qquad +\frac{1}{p_0}\log \E_0\left[\sup_{S\geq 0} \big(\mu_{\gamma,S}^{p_0}(\Omega)\circ \theta_{\eps^{-2},\omega_{\eps^{-2}}}\big)\right]+\frac{q-1}{2}\gamma^2\eps^{-2}\phi\star\phi(0).
		\end{aligned}
	\end{equation}
	The first term on the right hand side above is controlled by an explicit constant $\lambda(\cdot)$ such that for any $\gamma>0$ and $\mathbf P$-a.s. 
	\begin{equation}\label{eq1-free}
	\frac{1}{q}\,\, 	
	\lim_{\eps\to 0} \eps^2 \log \E_0\left[\e^{q\gamma H_{\eps^{-2}}(\omega)-\frac{q^2}{2}\gamma^2{\eps^{-2}}\phi\star\phi(0)}\right]	=
	\frac{1}{q}\lambda\left(q\gamma\right).
	\end{equation}
	In order to not ebb the flow of the proof, we defer the proof of the above fact to Theorem \ref{thm-free-energy} in Section \ref{sec-free}. 
	We therefore bound the second term in the last display of the right hand side of \eqref{eq-eq3} as follows. By Anderson's inequality (see \eqref{eq-Anderson} below in Lemma \ref{thm-Gaussian}), for any $\eta \in \Omega_0$, 
	$$
	\P_0(\| \omega-\eta\|_{w,\eps} <r \eps )\leq  \P_0(\| \omega\|_{w,\eps} <r \eps ).
	$$
	To estimate the right hand side, we will need the following estimate valid for the Wiener measure $\P_0$ on $\Omega_0$ (see \cite[Theorem 1.4]{Li99}): Let $j_{\frac{d-2}{2}}$ be the smallest positive root of the Bessel function $J_{\frac{d-2}{2}}$. If $w$ satisfies \eqref{eq-g-cond}, then
	\begin{equation}\label{prop-small-ball-wiener}	
	\lim_{\eps\to 0}\eps^2\log \P_0\left(\|\omega\|_w<\eps \right)=-\frac{j^2_{\frac{d-2}{2}}}{2}\int_{0}^\infty w^{-2}(t)\d t.
\end{equation}
Thus, applying  \eqref{prop-small-ball-wiener} leads to  
\begin{equation}\label{eq-dirichlet-laplacian}
		\limsup_{\eps\to 0}\sup_{\eta\in \Omega_0}\frac{\eps^2}{\ell}\log \P_0(\| \omega-\eta\|_{w,\eps} <r \eps )\leq -\frac{1}{2\ell r^2}j^2_{\frac{d-2}{2}}\int_{0}^\infty w^{-2}(t)\d t.
	\end{equation}
	 \medskip
	 
	 \noindent{\bf Step 3:} Thus, we concentrate on the third term of the last display of the (r.h.s.) of \eqref{eq-eq3}. Set 
	 \begin{equation}\label{def-f}
	 f_{\eps}:=\E_0\Big[\sup_{S\geq 0} \big(\mu_{\gamma,S}^{p_0}(\Omega)\circ \theta_{\eps^{-2},\omega_{\eps^{-2}}}\big)\Big].	
	  	 \end{equation}
	 Since $\dot{B}$ is stationary with respect to space-time shifts $(\theta_{t,x})_{t>0,x\in \R^d}$, then $f_{\eps}$ is stationary and by \eqref{eq-p0-def}, $\mathbf{E}[f_{\eps}]=\mathbf{E}[\sup_{S\geq 0}\mu_{\gamma,S}(\Omega)^p]< \infty$. By the ergodic theorem for stationary processes, \footnote{Here we invoke the ergodic theorem for general stationary processes which implies that the time averages of a stationary process evaluated w.r.t. a $L^1$ function $f$ converge almost surely to the conditional expectation of $f$ w.r.t. the invariant $\sigma$-algebra. In the present context, 
	 the function \eqref{def-f} is in $L^1(\mathbf P)$, therefore the conditional expectation $C(\dot B) < +\infty$ almost surely. Moreover, for $\gamma \in (0,\gamma_c)$, $\mu_\gamma(\Omega)>0$ almost surely w.r.t. $\mathbf P$, therefore $C(\dot B)>0$ almost surely for $s$ large enough, hence $\log C(\dot B) < \infty$, implying \eqref{eq-eq13}.}
	  there is a (possibly random) $C=C(\dot{B})$  such that for all $\eps>0$ sufficiently small, 
	 \begin{equation*}
	 	\E_0\left[\sup_{S\geq 0} \big(\mu_{\gamma,S}^{p_0}(\Omega)\circ \theta_{\eps^{-2},\omega_{\eps^{-2}}}\big)\right]\leq C\eps^{-2} \qquad \mathbf P\,\textrm{-a.s.}, 
	 \end{equation*}
	 and consequently, \begin{equation}\label{eq:eq13}
	 	\limsup_{\eps\to 0} \eps^2 \log \E_0\left[\sup_{S\geq 0}\big(\mu_{\gamma,S}^{p_0}(\Omega)\circ \theta_{\eps^{-2},\omega_{\eps^{-2}}}\big)\right]\leq 0\qquad\mathbf P\textrm{-a.s.} 
	 \end{equation}
	 Combining \eqref{eq-eq3}-\eqref{eq:eq13} and optimizing over $q,\ell$, we obtain 

\begin{equation}\label{eq-C1-def}
	\begin{aligned}
		&\limsup_{\eps\to 0}\eps^2\log \widehat{\mu}_\gamma(\|\omega\|_w<r \eps )\\&\leq -\sup_{\substack{q,\ell>1:\\ \frac{1}{q}+\frac{1}{\ell}=\frac{p_0-1}{p_0}}} \left[\frac{1}{2\ell r^2}j^2_{\frac{d-2}{2}}\int_{0}^\infty w^{-2}(t)\d t-\frac{q-1}{2}\gamma^2\phi\star\phi(0)-\frac{1}{q}\lambda\left(q\gamma\right)\right]\\&=:-C_1(d,\gamma,g,r),
	\end{aligned}
\end{equation}
and observe that $C_1>0$ if we choose $r$ small enough. This completes the proof of the proposition.

\noindent{\bf Step 4:} 
In Step 2 above, we have used the following general result for Gaussian measures:
\begin{lemma}\label{thm-Gaussian} 
Let $E$ be a separable Banach space, with $X$ being a $E$-valued centered Gaussian with law $\mu$. Then the following hold:
\begin{enumerate}
\item For any symmetric convex subset $A\subset E$ and $x\in E$, 
\begin{equation}\label{eq-Anderson}
\mu(A+ x) \leq \mu (A).
\end{equation}
\item If $H_\mu$ is the Cameron-Martin space of $\mu$,\footnote{ Let $E^\star$ denote the dual of $E$, with $S: E^\star \mapsto E$ being the map defined via the integral $S\xi = \int_E x \xi(x) \mu(d x)$ for $\xi\in E^\star$. Then $H_\mu\subset E$ is the Hilbert space which is the completion of the range of $S$ under the inner product $\langle \xi,\xi^\prime\rangle_\mu:= \int \xi(x)\xi^\prime(x) \mu(\d x)$. This inner product induces a norm $\|\cdot||_\mu$ on $H_\mu$.}  then for any $r>0$, and $\eta\in H_\mu$, 
\begin{equation}\label{eq-CM}
\e^{-\frac 12\|\eta\|_\mu^2} \mu(\omega\in E: \|\omega\|\leq r) \leq \mu(\omega\in E\colon \|\omega- \eta\|\leq r) \leq \mu(\omega\in E: \|\omega\|\leq r).
\end{equation}
\end{enumerate}
\end{lemma} 
\begin{proof}
\eqref{eq-Anderson} is Anderson's inequality, which follows from log-concavity of Gaussian measures (see \cite[Thm. 2.8.10]{B98}, or \cite[Thm. 2.13]{LS01}). The upper bound of \eqref{eq-CM} follows from \eqref{eq-Anderson}, while the lower bound follows from
the Cameron-Martin formula 
$$
\mu(A-\eta)= \int_A \exp\big(-\frac 12 \| \eta \|_\mu^2 + \langle \omega, \eta\rangle_\mu\big) \mu(\d\omega), \qquad A\subset E, \eta \in H_\mu, 
$$
as well as H\"older's inequality and the symmetry of $\langle \omega, \eta \rangle_\mu$ on $\{\omega\in E: \|\omega\|\leq r\}$, see \cite[Thm. 3.1]{LS01}.
\end{proof}

\end{proof}

We now turn to the lower bound in Theorem \ref{thm-exp-decay}, stated as

\begin{prop}\label{prop-lb}
Given $\gamma\in(0,\gamma_c)$ and $w$ satisfying \eqref{eq-g-cond}, there exists $r_0>0$ such that for all $r\in (0,r_0)$, there is an explicit constant $C_2 \in (0,\infty)$ (defined in \eqref{eq-C2-def}) such that
  \begin{equation}\label{eq-prop-lb}
	\begin{aligned}
		& \liminf_{\eps\to 0}\eps^2\log \widehat{\mu}_\gamma(\|\omega\|_w<r\eps) \geq - C_2. 
		\end{aligned}
		\end{equation}
	\end{prop}

\begin{proof} This will also be shown in four steps. 

\noindent{\bf Step 1:}  Recall the norm $\|\cdot\|_{w,\eps}$ from \eqref{eq-eps-norm-def}. We will first prove a lower bound on $\liminf_{\eps\to 0}\eps^2\log \widehat{\mu}_\gamma(\|\omega\|_{w,\eps}<r\eps)$. 
As remarked below \eqref{normalize}, for $\gamma \in (0,\gamma_c)$, it is sufficient to handle 
$$
\liminf_{\eps\to 0}\eps^2\log{\mu}_\gamma(\|\omega\|_{w,\eps}<r\eps).
$$

For a fixed $\gamma<\gamma_c$, we apply part (ii) of Theorem \ref{prop-Lp-moments} to find some $q_0>0$ such that for all $0<q\leq q_0$, 
\begin{equation}\label{eq-q0-def}
\mathbf{E}[\mu_\gamma(\Omega)^{-q}]<\infty.
\end{equation}
 Then 
 \begin{equation}\label{reverse}
 \begin{aligned}
	\mu_{\gamma}(\|\omega\|_{w,\eps} <r\eps)&=\liminf_{S\to\infty}	\mu_{\gamma,S+\eps^{-2}}(\|\omega\|_{w,\eps} <r\eps)
	\\&=\liminf_{S\to\infty}\E_0\left[\left(\e^{\gamma H_{\eps^{-2}}-\frac{\gamma^2}{2}\eps^{-2}\phi\star\phi(0)}\mathbbm{1}_{\|\omega\|_{w,\eps} <r\eps}\right)~\left(\mu_{\gamma,S}(\Omega)\circ \theta_{\eps^{-2},\omega_{\eps^{-2}}}\right) \right]\\&\geq \E_0\bigg[\bigg(\e^{\gamma H_{\eps^{-2}}-\frac{\gamma^2}{2}\eps^{-2}\phi\star\phi(0)}\mathbbm{1}_{\|\omega\|_{w,\eps} <r\eps}\bigg)~\Big(\mu_{\gamma}(\Omega)\circ \theta_{\eps^{-2},\omega_{\eps^{-2}}}\Big) \bigg]\\
	&\geq \E_0\bigg[\e^{\frac{q\gamma}{q+1} H_{\eps^{-2}}-\frac{\gamma^2q}{2(q+1)}\eps^{-2}\phi\star\phi(0)}\mathbbm{1}_{\|\omega\|_{w,\eps}<r\eps} \bigg]^{1+\frac{1}{q}}\E_0\bigg[\mu_{\gamma}(\Omega )^{-q}\circ \theta_{\eps^{-2},\omega_{\eps^{-2}}} \bigg]^{-\frac{1}{q}}.
\end{aligned}
\end{equation} 
In the second equality above we used the Markov property (again by conditioning on the $\sigma$-algebra generated by the Brownian path until time $\eps^{-2}$), in the subsequent lower bound we used Fatou's lemma and in the last lower bound we invoked 
{\it reverse} H\"{o}lder's inequality. To justify this step, recall that H\"older's inequality implies that if $\theta>1$ and $f(\omega),g(\omega)$ are measurable functions satisfying $\|fg\|_1 < \infty$ and $\|g\|_{-\frac 1 {\theta-1}} <\infty$, then 
 $$ 
 \| fg \|_1 \geq \|f\|_{\frac 1\theta }\|g\|_{-\frac 1{\theta-1}}.
 $$
 To deduce the last lower bound in \eqref{reverse}, we apply the above inequality for 
 $$
f(\omega)= \e^{\gamma H_{\eps^{-2}}-\frac{\gamma^2}{2}\eps^{-2}\phi\star\phi(0)}\mathbbm{1}_{\|\omega\|_{w,\eps} <r\eps}, \qquad g(\omega)= \mu_{\gamma}(\Omega)\circ \theta_{\eps^{-2},\omega_{\eps^{-2}}}\qquad\mbox{and}\,\,\, \theta=\frac{q+1}q>1, 
 $$

Thus, \begin{equation}\label{eq-eq17}
	\begin{aligned}
		\eps^{2}\log \mu_{\gamma}(\|\omega\|_{w,\eps} <r\eps)&\geq \eps^2 \left(\frac{q+1}{q}\right)\log  \E_0\left[\e^{\frac{q\gamma}{q+1} H_{\eps^{-2}}-\frac{q\gamma^2}{2(q+1)}\eps^{-2}\phi\star\phi(0)}\mathbbm{1}_{\|\omega\|_{w,\eps} <r\eps } \right]\\
		&-\frac{1}{q}\eps^2\log \E_0\left[\mu_{\gamma}(\Omega)^{-q}\circ \theta_{\eps^{-2},\omega_{\eps^{-2}}} \right]\\
		&=-\frac{\gamma^2}{2(q+1)}\phi\star\phi(0)+\eps^2 \left(\frac{q+1}{q}\right)\log \mu_{\frac{q\gamma}{q+1},{\eps^{-2}}}(\|\omega\|_{w,\eps} <r\eps )\\
		&\qquad -\frac{1}{q}\eps^2\log \E_0\left[\mu_{\gamma}(\Omega)^{-q}\circ \theta_{\eps^{-2},\omega_{\eps^{-2}}} \right].
	\end{aligned}
\end{equation}

\noindent{\bf Step 2:} In this step, we will show that for all $\gamma>0$, 
\begin{equation}\label{eq-lb}
	\liminf_{\eps\to 0}\eps^2 \log \mu_{\gamma,\eps^{-2}}(\|\omega\|_{w,\eps} <r \eps )\geq -\bigg(\frac{j^2_{\frac{d-2}{2}}}{2r^2}\int_{0}^\infty w^{-2}(t)\d t+\frac{\gamma^2}{2}(\phi\star\phi)(0)\bigg)
\end{equation}

This in turn will imply that 
 \begin{equation}\label{eq-eq15}
	\begin{aligned}
		&\liminf_{\eps\to 0}\eps^2 \left(\frac{q+1}{q}\right)\log \mu_{\frac{q\gamma}{q+1},{\eps^{-2}}}(\|\omega\|_{w,\eps} <r\eps ) \\
		&\geq -\left(\frac{q+1}{q}\right)\bigg(\frac{j^2_{\frac{d-2}{2}}}{2r^2}\int_{0}^\infty w^{-2}(t)\d t+\frac{\gamma^2}{2} \frac{q^2}{(q+1)^2} (\phi\star\phi)(0)\bigg)
	\end{aligned}
\end{equation}

To show \eqref{eq-lb}, we first note that, for any event $A_\eps \subset \Omega$ with $\P_0$ it holds $\mathbf P$-almost surely that 
\begin{equation}\label{BC}
\lim_{\eps\downarrow 0} \eps^2 \log \mu_{\gamma,\eps^{-2}}(A_\eps)= \lim_{\downarrow 0} \eps^2 \mathbf E\big[\log\mu_{\gamma,\eps^{-2}}(A_\eps)\big].
\end{equation}
We refer to Lemma \ref{lemma-BC} for a proof of this result. Therefore, 
$$
\begin{aligned}
\eps^2 \mathbf E\big[\log\mu_{\gamma,\eps^{-2}}(A_\eps)\big] &=\eps^2  \mathbf E \bigg[ \log \E_0\Big[ \e^{\gamma H_{\eps^{-2}}(\omega) - \frac{\gamma^2} 2 \eps^{-2}(\phi\star\phi)(0)} \,\, \1_{A_\eps}\Big]\bigg] \\
&= \eps^2  \mathbf E \bigg[ \log \E_0\Big[ \E_0\Big(\e^{\gamma H_{\eps^{-2}}(\omega) - \frac{\gamma^2} 2 \eps^{-2}(\phi\star\phi)(0)} \,\, \1_{A_\eps}\Big| A_\eps\Big)\Big]\bigg] \\
&= \eps^2  \mathbf E \bigg[ \log \E_0\bigg[ \1_{A_\eps}\,\, \E_0\Big(\e^{\gamma H_{\eps^{-2}}(\omega) - \frac{\gamma^2} 2 \eps^{-2}(\phi\star\phi)(0)} \Big| A_\eps\Big)\bigg] \\
&= \eps^2 \mathbf E \bigg[ \log \P_0(A_\eps) + \log \E_0\bigg(\e^{\gamma H_{\eps^{-2}}(\omega) - \frac{\gamma^2} 2 \eps^{-2}(\phi\star\phi)(0)} \Big| A_\eps\Big)\bigg] \\
&= \eps^2 \log \P_0(A_\eps) - \frac {\gamma^2}2 (\phi\star \phi)(0)+ \eps^2 \mathbf E\bigg[\log \E_0\Big(\e^{\gamma H_{\eps^{-2}}(\omega) } \Big| A_\eps\Big)\bigg] \\
&\geq \eps^2 \log \P_0(A_\eps) - \frac {\gamma^2}2 (\phi\star \phi)(0)
+ \eps^2 \E_0\big[\gamma \mathbf E(H_{\eps^{-2}}(\omega)) \big| A_\eps]\big]  \\
&= \eps^2 \log \P_0(A_\eps) - \frac {\gamma^2}2 (\phi\star \phi)(0).
\end{aligned}
$$
We note that in the lower bound above we applied Jensen's inequality and in the following display we used that $H_{\eps^{-2}}$ is an It\^o integral with $\mathbf E[H_{\eps^{-2}}]=0$. 
We now choose $A_\eps=\{\omega\colon \|\omega\|_{w,\eps} < r\eps\}$ and note that \eqref{mon} and \eqref{prop-small-ball-wiener} imply in particular that for this choice, $\P_0(A_\eps)>0$ for $\eps>0$ sufficiently small. 
Thus these two facts, combined with the above estimate imply \eqref{eq-lb}. 


\noindent{\bf Step 3:}  Now using \eqref{eq-q0-def}, the stationarity of the white noise, combined with the ergodic theorem and invoking \eqref{eq-q0-def} (similar to the argument for deducing \eqref{eq-eq13}), we obtain that
 \begin{equation}\label{eq-eq16}
	\limsup_{\eps\to 0}\frac{1}{q}\eps^2\log \E_0\left[\mu_{\gamma}(\Omega)^{-q}\circ \theta_{\eps^{-2},\omega_{\eps^{-2}}} \right]\leq 0 \qquad\mathbf P\mbox{-a.s.}
\end{equation}
Finally, combining \eqref{eq-eq17}-\eqref{eq-eq16}, and optimizing over $0<q<q_0$, it holds $\mathbf{P}$-a.s. 

\begin{equation}\label{eq-C2-def}
	\begin{aligned}
		&\liminf_{\eps\to 0}\eps^{2}\log \mu_{\gamma}(\|\omega\|_{w,\eps} <r\eps)\\&\geq -\inf_{0<q<q_0}\left[\left(\frac{q+1}{q}\right)\frac{j^2_{\frac{d-2}{2}}}{2r^2}\int_{0}^\infty w^{-2}(t)\d t+\frac{\gamma^2}{2}\phi\star\phi(0)\right] \\
&= -\left(\frac{q_0+1}{q_0}\right)\frac{j^2_{\frac{d-2}{2}}}{2r^2}\int_{0}^\infty w^{-2}(t)\d t-\frac{\gamma^2}{2}\phi\star\phi(0) \\&=:-C_2(d,\gamma,w,r).
	\end{aligned}
\end{equation}
 Clearly, $C_2<\infty$. 

\noindent{\bf Step 4:} Note that we have shown Proposition \ref{prop-lb} for the restricted weighted norm $\|\cdot\|_{w,\eps}$. To extend the argument to the weighted norm $\|\cdot\|_w$, we can apply again the reverse H\"{o}lder's inequality to deduce, for any $p>1$, \begin{align*}
	\liminf_{\eps\to 0}\eps^2 \log \mu_\gamma(\|\omega\|_w<\eps&)\geq p\liminf_{\eps\to 0}\eps^2\log \mu_\gamma(\|\omega\|_{w,\eps}<\eps )-(p-1)\limsup_{\eps\to 0}\eps^2\log\mu_\gamma\left(\sup_{t>\eps^{-2}}\frac{|\omega(t)|}{w(t)}<\eps \right)\\
	&\geq -p C_2-(p-1)\limsup_{\eps\to 0}\eps^2\log\mu_\gamma\left(\Omega\right) \\
	&=-p C_2.
\end{align*}
As $p>1$ is arbitrary,  we conclude the proof of \eqref{eq-exp-dec1}. 
\end{proof}


To finish the proof of Theorem \ref{thm-exp-decay}, we show 

\begin{cor}\label{prop-constants}
Let $p_0>1$ and $q_0>0$. Then for any $\gamma$ small enough, the constants $0<C_1\leq C_2<\infty$  can be chosen as 
\begin{equation}\label{eq-exp-dec2}
		\begin{aligned}
			C_1&:=\frac{p_0-1}{p_0 }\bigg(\frac{j^2_{\frac{d-2}{2}}}{2r^2}\int_{0}^\infty w^{-2}(t)\d t\bigg)-\frac{1}{2p_0}\frac{\gamma^2}{2}(\phi\star\phi)(0),\\
			C_2&:=\bigg(\frac{q_0+1}{q_0}\bigg)\bigg(\frac{j^2_{\frac{d-2}{2}}}{2r^2}\int_{0}^\infty w^{-2}(t)\d t\bigg)+\frac{\gamma^2}{2}(\phi\star\phi)(0),
		\end{aligned}
	\end{equation}
	where $j_{\frac{d-2}{2}}$ is the smallest positive root of the Bessel function $J_{\frac{d-2}{2}}$.
	In particular, for any $r>0$, \begin{equation}\label{eq:vol-dec-gamma-0}
	\begin{aligned}
			\lim_{\gamma\to 0}C_1(\gamma,d,r)=\lim_{\gamma\to 0}C_2(\gamma,d,r)&=\frac{j^2_{\frac{d-2}{2}}}{2r^2}\int_{0}^\infty w^{-2}(t)\d t \\
			&=\lim_{\eps\to 0}\eps^2\log \P_0(\|\omega\|_w<r\eps).
			\end{aligned}
		\end{equation}
In other words, both exponents converge as $\gamma\to 0$ to the volume decay exponent for the Wiener measure.
\end{cor} 
\begin{proof}
First we check \eqref{eq-exp-dec2}. Since the constant $C_2$ is the same from \eqref{eq-C2-def}, we only show the corresponding estimate for $C_1$.
We note that by Theorem \ref{thm-free-energy}, it holds that $\lambda(\gamma)\leq 0$ for all $\gamma>0$. Therefore, if $\gamma$ is small enough so that $\mu_\gamma\in L^{2 p_0}(\mathbf{P})$,  the formula from \eqref{eq-C1-def} can be bounded by below by (choosing $\ell=\frac{p_0}{p_0-1}$ and $q=2p_0$),\begin{equation*}
	C_1:=\frac{p_0-1}{p_0 }\bigg(\frac{j^2_{\frac{d-2}{2}}}{2r^2}\int_{0}^\infty w^{-2}(t)\d t\bigg)-\frac{1}{2p_0}\frac{\gamma^2}{2}(\phi\star\phi)(0).
\end{equation*}
Next, we prove \eqref{eq:vol-dec-gamma-0}. By the previous part, let $\eps>0$ and $p_0>1$, $q_0>0$ such that $\frac{p_0-1}{p_0}>1-\eps $ and $\frac{q_0+1}{q_0}<1+\eps $. Then if $\gamma$ is small enough so that \eqref{eq-exp-dec2} holds and $\frac{\gamma^2}{2}(\phi\star\phi)(0)<\eps $, then \begin{equation*}
0\leq 	\frac{j^2_{\frac{d-2}{2}}}{2r^2}\int_{0}^\infty w^{-2}(t)\d t-C_1\leq \eps \left(	\frac{j^2_{\frac{d-2}{2}}}{2r^2}\int_{0}^\infty w^{-2}(t)\d t\right)+\eps 
\end{equation*}
and \begin{equation*}
	0\leq C_2-\frac{j^2_{\frac{d-2}{2}}}{2r^2}\int_{0}^\infty w^{-2}(t)\d t\leq \eps \left(	\frac{j^2_{\frac{d-2}{2}}}{2r^2}\int_{0}^\infty w^{-2}(t)\d t\right)+\eps .
\end{equation*}
Since $\eps>0$ is arbitrary, this completes the proof of Corollary \ref{prop-constants}. 

\end{proof}

\subsection{A variational formula on a group-invariant compactification.}\label{sec-free}

In Step 2 of the proof of Proposition \ref{prop-ub} we have used \eqref{eq1-free} which gives an  explicit variational formula for the free energy $\lim_{T\to\infty}\frac{1}{T}\log \mu_{\gamma,T}(\Omega)$  as well its properties for all $\gamma>0$. Description of this formula and its properties need some setting up and further notation.

We denote by $\Mcal_1= {\Mcal_1}(\R^d)$ (resp., $\Mcal_{\leq 1}$) the space of probability (resp., subprobability) distributions on $\R^d$ and by $\widetilde\Mcal_1= \Mcal_1 \big/ \sim$ the quotient space 
of $\Mcal_1$ under the action of $\R^d$ (as an additive group on $\Mcal_1$), that is, for any $\mu\in \Mcal_1$, its {\it{orbit}} is defined by $\widetilde{\mu}=\{\mu\star\delta_x\colon\, x\in \R^d\}\in \widetilde\Mcal_1$. Then we define 
\begin{equation}\label{eq-space-X}
\X=\Big\{\xi:\xi=\{\widetilde{\alpha}_i\}_{i\in I},\alpha_i\in \mathcal{M}_{\leq 1},\sum_{i\in I}\alpha_i(\R^d)\leq 1\Big\}
\end{equation}
to be the space of all empty, finite or countable collections of orbits of subprobability measures with total masses bounded by 1. Note that the quotient space $\widetilde\Mcal_1(\R^d)$ is embedded in $\X$ -- that is, for any $\mu\in \Mcal_1(\R^d)$, $\widetilde\mu\in\widetilde\Mcal_1(\R^d)$ and the single orbit element $\{\widetilde\mu\}\in \X$ belongs to $\X$ (in this context, sometimes we will write $\widetilde\mu\in \X$ for $\{\widetilde\mu\}\in \X$). 

The space $\X$ also comes with a metric structure. If for any $k\geq 2$, $\mathcal H_k$ is the space of functions $h:\left(\R^d\right)^k\rightarrow \R$ which are invariant under rigid translations and which vanish at infinity, we define, for any $h\in \mathcal{H}=\bigcup_{k\geq 2}\mathcal{H}_k$, the functionals
\begin{equation}\label{Lambda-def}
\mathscr J (h,\xi)=\sum_{\widetilde\alpha\in\xi}\int_{(\R^d)^k }h(x_1,\ldots, x_k)\alpha(\d x_1)\cdots\alpha(\d x_k).
\end{equation} 
A sequence $\xi_n$ is said to converge to $\xi$ in the space $\X$ if 
$$
\mathscr J(h,\xi_n)\to \mathscr J(h,\xi)\qquad \forall \,\, h\in \mathcal H.
$$
This leads to the following definition of the metric $\mathbf D$ on $\X$.
For any $\xi_1,\xi_2\in \X$, set 
\begin{align*}
\mathbf{D}(\xi_1,\xi_2)&=\sum_{r=1}^{\infty}\frac{1}{2^r}\frac{1}{1+\lVert h_r\rVert_{\infty}} \bigg|\mathscr J(h_r,\xi_1)- \mathscr J(h_r,\xi_2)\bigg| \\
&=\sum_{r=1}^{\infty}\frac{1}{2^r}\frac{1}{1+\lVert h_r\rVert_{\infty}}\bigg|\sum_{\widetilde{\alpha}\in \xi_1}\int h_r(x_1,...,x_{k_r})\prod_{i=1}^{k_r}\alpha(\mathrm{d}x_i)-\sum_{\widetilde{\alpha}\in \xi_2}\int h_r(x_1,...,x_{k_r})\prod_{i=1}^{k_r}\alpha(\mathrm{d}x_i)\bigg|.
\end{align*}

The following result was proved in \cite[Theorem 3.1-3.2]{MV14}. 
\begin{theorem}\label{thm-compact}
We have the following properties of the space $\X$.
\begin{itemize}
\item $\mathbf D$ is a metric on $\X$ and the space $\widetilde\Mcal_1(\R^d)$ is dense in $(\X,\mathbf D)$.
\item Any sequence in $\widetilde\Mcal_1(\R^d)$
has a convergent subsequence with a limit point in $\X$. Thus, $\X$ is the completion and the compactification of the totally bounded metric space $\widetilde\Mcal_1(\R^d)$ under 
$\mathbf D$.
\item Let a sequence $(\xi_n)_n$ in $\X$ consist of a single orbit $\widetilde\gamma_n$ and $\mathbf D(\xi_n,\xi)\to 0$ where $\xi=(\widetilde\alpha_i)_i\in \X$ such that $\alpha_1(\R^d)\geq \alpha_2(\R^d) \geq \dots$.
	Then given any $\eps>0$, we can find $k\in \N$ such that $\sum_{i>k} \alpha_i(\R^d) <\eps$ and we can write 
		$\gamma_n= \sum_{i=1}^k\alpha_{n,i}+ \beta_n$,  
		such that	
		\begin{itemize}
		\item  for any $i=1,\dots,k$, there is a  sequence $(a_{n,i})_n\subset \R^d$ satisfying
		\begin{equation*}
		\begin{aligned}
		\alpha_{n,i}\star \delta_{a_{n,i}}  \Rightarrow \alpha_i \quad\mbox{with}\quad \lim_{n\to\infty} \, \inf_{i\ne j}\,\, |a_{n,i}- a_{n,j}| =\infty.
		\end{aligned}
		\end{equation*}
		\item The sequence $\beta_n$ totally disintegrates, meaning that for any $r>0$, $\sup_{x\in \R^d} \beta_n\big(B_r(x)\big)\to 0$.
	\end{itemize} 
	\end{itemize}
\end{theorem}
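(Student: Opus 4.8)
The plan is to establish the three assertions through a concentration-compactness analysis of probability measures modulo translations, handling the metric, density, and compactness claims in turn. First I would verify that $\mathbf D$ is a genuine metric. Symmetry, non-negativity, and the triangle inequality are immediate from the weighted-sum structure of $\mathbf D$ together with a termwise application of the triangle inequality, so the only substantial point is that $\mathbf D(\xi_1,\xi_2)=0$ forces $\xi_1=\xi_2$. Since $\mathbf D(\xi_1,\xi_2)=0$ is equivalent to $\mathscr J(h_r,\xi_1)=\mathscr J(h_r,\xi_2)$ for all $r$, and the family $(h_r)_r$ is chosen dense in $\mathcal H$ in the uniform norm, this is in turn equivalent to $\mathscr J(h,\xi_1)=\mathscr J(h,\xi_2)$ for every $h\in\mathcal H$. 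The crux is therefore the separation statement: the translation-invariant multilinear functionals $\mathscr J(h,\cdot)$, $h\in\mathcal H$, determine an element of $\X$ uniquely. I would prove this by an inductive argument on the number of orbits, choosing test functions $h$ concentrated on suitable diagonal blocks to read off, one orbit at a time, the shapes of the individual $\widetilde\alpha_i$ and their masses; the vanishing of $h$ at infinity is precisely what decouples the contributions of distinct orbits.

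Next, for the density of $\widetilde\Mcal_1(\R^d)$ in $\X$, given $\xi=\{\widetilde\alpha_i\}_{i\in I}$ I would first truncate to finitely many orbits $\alpha_1,\dots,\alpha_N$ capturing all but $\eps$ of the total mass, and then form the single measure $\gamma_n=\sum_{i=1}^N \alpha_i\star\delta_{n b_i}$ for fixed directions $b_i$ with $b_i\neq b_j$. As $n\to\infty$ the representatives separate in space, so for each fixed $h\in\mathcal H$ all mixed terms in $\mathscr J(h,\widetilde\gamma_n)$ vanish (again because $h\to 0$ at infinity) and only the diagonal contributions survive, giving $\mathscr J(h,\widetilde\gamma_n)\to\sum_{i=1}^N\mathscr J(h,\widetilde\alpha_i)$, which approximates $\mathscr J(h,\xi)$ up to the truncation error. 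A diagonal argument over the countable family $(h_r)_r$ then yields $\mathbf D(\widetilde\gamma_n,\xi)\to 0$.

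The heart of the theorem is the compactness together with the explicit decomposition in the third bullet. Picking representatives $\gamma_n\in\Mcal_1$ of the single orbits $\widetilde\gamma_n$, I would run the standard bubble-extraction scheme built on the L\'evy concentration function $Q_n(R)=\sup_{x\in\R^d}\gamma_n(B_R(x))$. Passing to a subsequence along which $\lim_n Q_n(R)$ exists for each $R$ in a countable set of radii, if the limiting concentration is positive I recenter $\gamma_n$ at a near-maximizing point and extract, by Prokhorov's theorem, a weak subsequential limit $\alpha_1\in\Mcal_{\leq 1}$; I then subtract this bubble and iterate on the residual mass. The divergence $|a_{n,i}-a_{n,j}|\to\infty$ is forced by always recentering far from the previously extracted bubbles, the masses $\alpha_i(\R^d)$ are non-increasing and sum to at most $\gamma_n(\R^d)=1$ by construction, and the leftover $\beta_n$, whose concentration tends to zero, totally disintegrates in the sense that $\sup_x\beta_n(B_r(x))\to 0$ for every $r>0$. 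Combined with the separation property established above, this shows at once that the subsequential limit lies in $\X$ (yielding compactness) and that it has exactly the structure $\gamma_n=\sum_{i=1}^k\alpha_{n,i}+\beta_n$ asserted in the third bullet.

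The main obstacle I anticipate is twofold: the injectivity of the functionals $\mathscr J(h,\cdot)$ and the careful mass accounting in the bubble extraction. Proving that no mass is double-counted across successive bubbles — so that the extracted orbit masses together with the disintegrating remainder exactly exhaust the unit mass and the limit genuinely belongs to $\X$ rather than overflowing the constraint $\sum_i\alpha_i(\R^d)\leq 1$ — requires controlling the interaction between bubbles uniformly in $n$. It is here that translation-invariance and the decay of elements of $\mathcal H$ at infinity must be exploited most delicately.
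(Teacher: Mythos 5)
The first thing to note: the paper does not prove Theorem \ref{thm-compact} at all. It is imported verbatim from \cite[Theorems 3.1--3.2]{MV14}, and the appendix only records the statement so that Lemma \ref{lemma-m} and Proposition \ref{prop-m} can use it. So there is no in-paper argument to compare against; the relevant comparison is with the cited source. Your route --- separation of points of $\X$ by the translation-invariant functionals $\mathscr J(h,\cdot)$, density of $\widetilde\Mcal_1(\R^d)$ via widely separated translates, and compactness via a bubble-extraction scheme built on the L\'evy concentration function $\sup_x\gamma_n(B_R(x))$ --- is essentially the concentration-compactness strategy carried out in \cite{MV14}, so you have reconstructed the intended proof rather than found a different one.

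There is, however, one step in your sketch that is wrong as written. In the density argument you approximate $\xi=\{\widetilde\alpha_i\}_{i\in I}$ by $\gamma_n=\sum_{i=1}^N\alpha_i\star\delta_{nb_i}$, but this measure has total mass $\sum_{i=1}^N\alpha_i(\R^d)$, which is in general strictly less than $1$: elements of $\X$ are collections of \emph{sub-probability} orbits whose masses need only sum to at most one. Hence $\widetilde\gamma_n$ need not lie in $\widetilde\Mcal_1(\R^d)$, which is precisely the set whose density you are trying to establish. The fix is standard but must be stated: pad $\gamma_n$ with a totally disintegrating piece carrying the deficit mass $1-\sum_{i=1}^N\alpha_i(\R^d)$, for instance the uniform measure on a ball of radius $R_n\to\infty$ centered far from the points $nb_i$, scaled to that mass. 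Because such a sequence satisfies $\sup_{x}\beta_n(B_r(x))\to 0$ for every fixed $r$, every cross term it produces in $\mathscr J(h,\cdot)$ is, up to an arbitrarily small error coming from the decay of $h$ at infinity, bounded by a constant multiple of $\sup_x\beta_n(B_R(x))$, so it contributes nothing in the limit and $\mathbf D(\widetilde\gamma_n,\xi)\to 0$ still holds. This padding is not cosmetic: it is exactly the mechanism by which mass ``leaks'' in this theory, and it is why $\X$ is the completion (not just a closure) of $\widetilde\Mcal_1(\R^d)$. Beyond this, the two points you flag as the main obstacles --- injectivity of the family $\{\mathscr J(h,\cdot)\}_{h\in\mathcal H}$ and the mass accounting in the bubble extraction --- are indeed the technical core of \cite{MV14}; your sketches of them point in the right direction but remain sketches at the level of detail given.
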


Recall the definition of $\Phi_\gamma:\X\rightarrow \R$ from \eqref{def-Phi}:
\begin{align}\label{functional Phi}
\Phi_\gamma(\xi)=\frac{\gamma^2}{2}\sum_{i\in I}\int_{\R^d\times\R^d}V(x_1-x_2)\prod_{j=1}^2\alpha_i(\mathrm{d}x_j), \qquad \xi=(\widetilde{\alpha}_i)_{i\in I}.
\end{align}
 Because of shift-invariance of the integrand in \eqref{functional Phi}, $\Phi_\gamma$ is well-defined on $\X$. Moreover, we have 
 \begin{lemma}\label{lemma-Phi}
 $\Phi_\gamma$ is continuous and non-negative on $\X$, and $\Phi_\gamma(\cdot)\leq \frac{\gamma^2}{2}V(0)$.
 \end{lemma}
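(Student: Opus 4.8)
The plan is to treat the three assertions separately, dispatching non-negativity and the upper bound by an explicit square representation of the quadratic form, and continuity by recognizing $F_\gamma$ as the functional $\mathscr J$ evaluated against a single admissible test function.

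For non-negativity I would first record the convolution identity
\begin{equation*}
(\phi\star\phi)(x_1-x_2)=\int_{\R^d}\phi(x_1-z)\,\phi(x_2-z)\,\d z,
\end{equation*}
valid because $\phi$ is spherically symmetric (hence even). Substituting this into each summand of \eqref{functional Phi} and applying Tonelli's theorem gives
\begin{equation*}
\int_{\R^d\times\R^d}(\phi\star\phi)(x_1-x_2)\,\alpha_i(\d x_1)\,\alpha_i(\d x_2)=\int_{\R^d}\Big(\int_{\R^d}\phi(x-z)\,\alpha_i(\d x)\Big)^2\d z\ge 0,
\end{equation*}
so that $F_\gamma(\xi)\ge 0$ term by term. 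For the upper bound, Cauchy--Schwarz yields $(\phi\star\phi)(x)\le(\phi\star\phi)(0)=V(0)$ for every $x$, so each summand is at most $V(0)\,\alpha_i(\R^d)^2$. Writing $m_i:=\alpha_i(\R^d)$ and using $m_i\le\sum_j m_j\le 1$, one gets $\sum_i m_i^2\le\sum_i m_i\le 1$, and therefore $F_\gamma(\xi)\le\frac{\gamma^2}{2}V(0)$.

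For continuity I would identify $F_\gamma$ with a $\mathscr J$-functional. Set $h_V(x_1,x_2):=(\phi\star\phi)(x_1-x_2)$. Since $\phi$ is smooth and compactly supported, $\phi\star\phi$ is continuous and compactly supported, so $h_V$ is invariant under rigid translations and vanishes at infinity; that is, $h_V\in\mathcal H_2\subset\mathcal H$. Comparing \eqref{Lambda-def} with \eqref{functional Phi} gives $F_\gamma(\xi)=\frac{\gamma^2}{2}\,\mathscr J(h_V,\xi)$ for every $\xi\in\X$. By construction the metric $\mathbf D$ metrizes exactly the convergence $\xi_n\to\xi$ if and only if $\mathscr J(h,\xi_n)\to\mathscr J(h,\xi)$ for all $h\in\mathcal H$ (Theorem \ref{thm-compact}), so each functional $\xi\mapsto\mathscr J(h,\xi)$ is $\mathbf D$-continuous; applying this with $h=h_V$ proves continuity of $F_\gamma$.

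The only genuinely delicate point is the continuity claim, and it reduces entirely to the membership $h_V\in\mathcal H$ -- in particular to the fact that $\phi\star\phi$ vanishes at infinity, which is exactly where compact support of the mollifier enters. Once this is in place, continuity is automatic from the definition of the topology on $\X$, and no direct estimate on $\mathbf D$ (nor any analysis of the mass-splitting/disintegration in the last bullet of Theorem \ref{thm-compact}) is needed: the decay of $\phi\star\phi$ guarantees that totally disintegrating mass contributes negligibly, which is precisely the information encoded in $h_V\in\mathcal H$.
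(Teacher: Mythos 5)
Your proof is correct, and for the bound $0\le F_\gamma\le\frac{\gamma^2}{2}V(0)$ it is essentially the paper's argument: the paper also writes $V(x_1-x_2)=\int\phi(x_1-z)\phi(x_2-z)\,\d z$, applies Cauchy--Schwarz to get each summand bounded by $\alpha_i(\R^d)^2\|\phi\|_2^2=\alpha_i(\R^d)^2 V(0)$, and concludes via $\sum_i\alpha_i(\R^d)^2\le 1$; for non-negativity the paper simply invokes $V\ge 0$ (which holds since $\phi\ge 0$), whereas your square representation $\int(\int\phi(x-z)\,\alpha_i(\d x))^2\d z\ge 0$ is a slightly stronger observation that would survive even without the sign assumption on $\phi$. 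The real divergence is in the continuity claim: the paper outsources it entirely to \cite[Corollary 3.3]{MV14}, while you argue it directly by noting $F_\gamma=\frac{\gamma^2}{2}\mathscr J(h_V,\cdot)$ with $h_V(x_1,x_2)=(\phi\star\phi)(x_1-x_2)\in\mathcal H_2$ and appealing to the fact that the topology of $(\X,\mathbf D)$ is exactly that of convergence of $\mathscr J(h,\cdot)$ for all $h\in\mathcal H$. This is a cleaner, self-contained presentation, but be aware that it is not citation-free: the metric $\mathbf D$ is built from a \emph{countable} family $\{h_r\}$, and the statement that $\mathbf D$-convergence implies $\mathscr J(h,\xi_n)\to\mathscr J(h,\xi)$ for \emph{every} $h\in\mathcal H$ (not just the $h_r$) is itself a nontrivial approximation result from \cite{MV14} --- essentially the content of the corollary the paper cites. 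So your argument shifts the reliance on \cite{MV14} from the continuity of $F_\gamma$ to the characterization of the topology, which the present paper states but does not prove; this is acceptable given how the paper sets things up, but it is worth flagging that the membership $h_V\in\mathcal H$ alone does not close the argument without that equivalence.
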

 \begin{proof}
For the continuity of $\Phi_\gamma$, we refer to \cite[Corollary 3.3]{MV14}. Recall that $V=\phi\star \phi$ and $\phi$ is rotationally symmetric. Hence, for any $\alpha\in \Mcal_{\leq 1}(\R^d)$, by Cauchy-Schwarz inequality, 
\begin{equation}\label{function Phi-2}
\begin{aligned}
\int_{\R^{2d}}& V(x_1-x_2) \, \alpha(\d x_1)\, \alpha(\d x_2) =\int_{\R^{2d}} \alpha(\d x_1)\alpha(\d x_2) \, \int_{\R^d} \d z \, \phi(x_1-z) \, \phi(x_2-z) \\
&\leq \int_{\R^{2d}} \alpha(\d x_1)\alpha(\d x_2) \, \bigg[\int_{\R^d} \d z \phi^2(x_1-z)\bigg]^{1/2}  \,\, \bigg[\int_{\R^d} \d z \phi^2(x_2-z)\bigg]^{1/2} \leq \alpha\big(\R^d\big)^2 \|\phi\|_2^2. 
\end{aligned}
\end{equation}
Thus, $\Phi_\gamma(\xi)\leq \frac{\gamma^2 (\phi\star\phi)(0)}2 \sum_{i\in I} (\alpha_i(\R^d))^2 \leq \frac{\gamma^2 (\phi\star\phi)(0)}2$ since, for $\xi=(\widetilde\alpha_i)_{i\in I} \in \X$, we have $\sum_{i\in I} \alpha_i(\R^d)\leq 1$. 
Moreover, since $V=\phi\star\phi$ is non-negative, also ${\Phi_\gamma}(\cdot)\geq 0$.
\end{proof}

\medskip

To define the required variational formula, we also need a Markovian dynamics on the space $\Mcal_1(\widetilde{\mathcal X})$ of probability measures on the compactification $\mathcal X$.
To this end, we recall the basic notations and properties of this dynamics. 

For any $\alpha\in \Mcal_\leq(\R^d)$, let
$$
\mathscr F_t(\alpha)=\int_{\R^d}\int_{\R^d}\alpha(\mathrm{d}z) \bE_z\bigg[\1{\{\omega_t\in\mathrm{d}x\}}\, \exp\bigg\{\gamma\mathscr H_t(\omega)-\frac{\gamma^2}{2}tV(0)\bigg\}\bigg],
$$
and note that for any $a\in \R^d$ and $t>0$, $\mathscr F_t(\alpha_i)\overset{\ssup d}=\mathscr F_t(\alpha_i\star\delta_a)$. Hence, we may define 
$\mathscr F_t, \overline{\mathscr F}_t: \X\mapsto \R$ as 
\begin{align}
\begin{split}\label{scr F}
&\mathscr F_t(\xi)=\sum_i \mathscr F_t(\alpha_i), \quad \overline{\mathscr F}_t(\xi)=\mathscr F_t(\xi)+\mathbf E\big[ Z_t-\mathscr F_t(\xi)\big] 
\quad\forall \xi=(\widetilde\alpha_i)_{i\in I}\in \X, \,\,\, Z_t=\E_0[\e^{\gamma H_t}]. 
\end{split}
\end{align}

Next, for any $t>0$, and for $\xi=(\widetilde\alpha_i)_i \in \X$, we set 
\begin{equation}\label{alpha after time t}
\begin{aligned}
&\alpha_i^{\ssup t}(\mathrm{d}x):=\frac{1}{\overline{\mathscr F}_t(\xi)}\int_{\R^d}\alpha_i (\mathrm{d}z)\bE_z\bigg [\1{\{\omega_t\in\mathrm{d}x\}}\, \exp\big\{\gamma H_t(\omega)-\frac{\gamma^2}{2}tV(0)\big\}\bigg] ,
\\
&\xi^{\ssup t}:= \big(\widetilde\alpha_i^{\ssup t}\big)_{i\in I} \in \X.
\end{aligned}
\end{equation}
Recall that $\mathscr F_t(\alpha_i)\overset{\ssup d}=\mathscr F_t(\alpha_i\star\delta_a)$ and likewise,  
$(\alpha_i\star\delta_a)^{\ssup t}(\mathrm{d}x)\overset{(d)}{=}(\alpha_i^{\ssup t}\star\delta_a)(\mathrm{d}x)$.
For any $\vartheta\in \Mcal_1(\X)$, then \eqref{alpha after time t} further defines a transition  kernel
\begin{align}
\label{pi-function}
\Pi_t(\vartheta,\d\xi^\prime)= \int_{\X} \pi_t(\xi,\d\xi^\prime) \vartheta(\d\xi) \qquad \mbox{where}\quad \pi_t(\xi,\mathrm{d}\xi^{\prime})=\P\big[\xi^{\ssup t}\in\mathrm{d}\xi^\prime |\xi\big]\in \Mcal_1(\X).
\end{align}
For any $\gamma>0$, the set of fixed points of this dynamics is non-empty, as shown by 
\begin{lemma}\label{lemma-m}
The set
\begin{equation}\label{mathcal-K}
\mathfrak {m}_\gamma=\big\{\vartheta\in \mathcal{M}_1(\X):\Pi_t\,\vartheta=\vartheta\text{ for all } t>0\big\}
\end{equation}
of fixed points of $\Pi_t$ is a non-empty, compact subset of $\Mcal_1(\X)$.
\end{lemma}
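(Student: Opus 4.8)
The plan is to regard $(\Pi_t)_{t>0}$ as a Feller semigroup acting on the compact space $\mathcal M_1(\X)$ and to combine a Krylov--Bogolyubov averaging argument (for non-emptiness) with the stability of the fixed-point equation under weak limits (for closedness, hence compactness). Throughout I use that, by Theorem \ref{thm-compact}, $(\X,\mathbf D)$ is a compact metric space; consequently $C(\X)$ is separable and $\mathcal M_1(\X)$, with the topology of weak convergence, is itself compact and metrizable. Thus $\mathfrak m_\gamma\subset\mathcal M_1(\X)$ automatically sits inside a compact set, and the problem reduces to establishing two structural facts about the kernel $\Pi_t$ from \eqref{pi-function}: the \emph{semigroup property} $\Pi_{t+s}=\Pi_t\Pi_s$, and the \emph{Feller property} that $\Pi_t:\mathcal M_1(\X)\to\mathcal M_1(\X)$ is weakly continuous for each fixed $t>0$.

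For the semigroup property I would argue that the random evolution $\xi\mapsto\xi^{\ssup t}$ from \eqref{alpha after time t} satisfies $\xi^{\ssup{t+s}}\overset{d}{=}(\xi^{\ssup t})^{\ssup s}$, where the second evolution is driven by the white noise shifted by $\theta_{t,0}$; since $\dot B$ has independent time increments and is stationary under $(\theta_{t,x})$, averaging over the environment gives $\pi_{t+s}(\xi,\cdot)=\int_\X\pi_s(\eta,\cdot)\,\pi_t(\xi,\d\eta)$, i.e. $\Pi_{t+s}=\Pi_t\Pi_s$. For the Feller property --- which I expect to be the main obstacle --- it suffices by duality to show that for each $G\in C(\X)$ the dual function $(\Pi_t^\ast G)(\xi):=\int_\X G(\xi')\,\pi_t(\xi,\d\xi')=\mathbf E\big[G(\xi^{\ssup t})\mid\xi\big]$ is continuous in $\xi$. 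The delicate point is continuity at boundary configurations of the compactification, where mass has escaped to spatial infinity or totally disintegrated (third bullet of Theorem \ref{thm-compact}). Here I would use that $\mathscr G_t$ and the evolution \eqref{alpha after time t} are built from the shift-invariant, vanishing-at-infinity functionals $\mathscr J(h,\cdot)$ defining $\mathbf D$, together with the continuity estimates for such functionals from \cite[Corollary 3.3]{MV14} already invoked in Lemma \ref{lemma-Phi}. Decomposing a convergent sequence $\gamma_n=\sum_{i\le k}\alpha_{n,i}+\beta_n$ as in Theorem \ref{thm-compact} and verifying that the disintegrating remainder $\beta_n$ contributes negligibly to $\mathbf E[G(\xi^{\ssup t})]$ yields the required continuity.

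Granting these two facts, non-emptiness follows by Krylov--Bogolyubov. Fix any $\vartheta_0\in\mathcal M_1(\X)$ and set $\bar\vartheta_T:=\frac1T\int_0^T\Pi_s\vartheta_0\,\d s\in\mathcal M_1(\X)$, which is well defined since $s\mapsto\Pi_s\vartheta_0$ is (weakly) measurable. By compactness of $\mathcal M_1(\X)$ there is a sequence $T_n\to\infty$ with $\bar\vartheta_{T_n}\to\vartheta_\ast$ weakly. Using the semigroup property and linearity,
\begin{equation*}
\Pi_t\bar\vartheta_T-\bar\vartheta_T=\frac1T\Big(\int_T^{T+t}\Pi_s\vartheta_0\,\d s-\int_0^t\Pi_s\vartheta_0\,\d s\Big),
\end{equation*}
a signed measure of total variation at most $2t/T\to0$. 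Testing against any $G\in C(\X)$ and using the Feller continuity of $\Pi_t$ to pass to the limit along $(T_n)$ gives $\Pi_t\vartheta_\ast=\vartheta_\ast$ for every $t>0$, so $\vartheta_\ast\in\mathfrak m_\gamma$ and $\mathfrak m_\gamma\neq\emptyset$.

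Finally, compactness of $\mathfrak m_\gamma$ reduces to closedness, since $\mathcal M_1(\X)$ is compact. If $\vartheta_n\in\mathfrak m_\gamma$ with $\vartheta_n\to\vartheta$ weakly, then for each fixed $t>0$ the Feller continuity of $\Pi_t$ gives $\Pi_t\vartheta=\lim_n\Pi_t\vartheta_n=\lim_n\vartheta_n=\vartheta$, whence $\vartheta\in\mathfrak m_\gamma$. Therefore $\mathfrak m_\gamma$ is a closed subset of the compact space $\mathcal M_1(\X)$, hence compact, which together with the previous paragraph completes the proof.
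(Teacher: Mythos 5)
Your proposal is correct in substance, but only the compactness half coincides with the paper's argument; the non-emptiness half takes a genuinely different route. For compactness you and the paper do the same thing: the Feller continuity of $\xi\mapsto\pi_t(\xi,\cdot)$ (which the paper extracts from the convergence criterion of Theorem \ref{thm-compact}, and which you rightly identify as the real technical content) makes $\vartheta\mapsto\Pi_t\vartheta$ weakly continuous, so $\mathfrak m_\gamma$ is a closed subset of the compact space $\Mcal_1(\X)$, hence compact. For non-emptiness, however, the paper does not average: it simply observes that $\delta_{\widetilde 0}\in\mathfrak m_\gamma$, because the empty configuration $\widetilde 0$ is fixed deterministically by the evolution \eqref{alpha after time t} --- if every $\alpha_i$ is the null measure, every numerator there vanishes, so $\xi^{\ssup t}=\widetilde 0$ a.s.\ and $\Pi_t\delta_{\widetilde 0}=\delta_{\widetilde 0}$ for all $t>0$. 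This is a one-line argument that requires neither the semigroup identity $\Pi_{t+s}=\Pi_t\Pi_s$ nor any regularity in $s$. Your Krylov--Bogolyubov argument, by contrast, needs both of these as additional lemmas: the Chapman--Kolmogorov property (which does hold --- your sketch via independence and stationarity of the white-noise increments is on target, and it can be verified from the multiplicative factorization $\overline{\mathscr G}_{t+s}(\xi)=\overline{\mathscr G}_t(\xi)\cdot\big(\overline{\mathscr G}_s(\xi^{\ssup t})\circ\theta_{t,0}\big)$ of the normalizations in \eqref{alpha after time t}), and weak measurability of $s\mapsto\Pi_s\vartheta_0$ to make sense of the Ces\`aro average (fillable, since $\mathscr H_t$ has a.s.\ continuous paths in $t$, making $t\mapsto\xi^{\ssup t}$ a.s.\ continuous in $\mathbf D$). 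What your route buys is generality: it is the standard, model-independent existence machinery for Feller semigroups on compact spaces and would succeed even if no invariant measure were visible by inspection. What it costs is exactly this extra infrastructure, which the paper avoids because an explicit fixed point is available; note also that your averaging produces no richer object here --- in the subcritical regime the Ces\`aro averages converge to $\delta_{\widetilde 0}$ itself (cf.\ Proposition \ref{prop-m}), the very measure the paper writes down directly.
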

\begin{proof}
Note that ${\mathfrak m_\gamma}\neq \emptyset$, because $\delta_{\widetilde 0}\in{\mathfrak m_\gamma}$. Moreover, using the definition of the metric $\mathbf D$ on $\X$ and by the resulting convergence criterion determined by Theorem \ref{thm-compact}, it was shown in \cite[Theorem 3.1]{BM19} that the map 
$$
\X\ni\xi\mapsto\pi_t(\xi,\cdot)\in \Mcal_1(\widetilde{\mathcal X}) \qquad\mbox{is continuous.}
$$
 This property, together with the compactness of $\X$ (and therefore also that of $\Mcal_1(\X)$), we have  that $\Mcal_1(\X)\ni \vartheta\mapsto \Pi_t(\vartheta,\cdot)$ is continuous too for any $t>0$. It follows that ${\mathfrak m_\gamma}$ is a closed subset of the compact metric space $\mathcal M_1(\X)$, implying the compactness of ${\mathfrak m_\gamma}$. 
\end{proof}

The following result provides the required variational formula \eqref{eq1-free} and additional information about the maximizers of the continuous map $\mathcal{E}_{\Phi_{\gamma}}$, defined in \eqref{def-Phi} in the compact set $\mathfrak{m}_\gamma$.

\begin{theorem}\label{thm-free-energy}
Fix $d\in \N$ and $\gamma>0$, recall the set $\mathfrak m_\gamma$ from \eqref{mathcal-K} and the functional 
\begin{equation}\label{def-Phi}
\begin{aligned}
&\Phi_{\gamma}(\xi)=\frac{\gamma^2}{2}\sum_{\widetilde\alpha\in\xi}\int_{\R^d\times\R^d}(\phi\star\phi)(x_1-x_2)\prod_{j=1}^2\alpha(\mathrm{d}x_j) \quad \forall \, \xi\in \X, \quad\mbox{and we define}\\
&{\mathscr I}_{\Phi_{\gamma}}(\vartheta)= \int_{\X} \Phi_{\gamma}(\xi)\,\vartheta(\d\xi) \qquad \vartheta\in\Mcal_1(\X).
\end{aligned}
\end{equation}
Then we have the following implications:
\begin{itemize}
\item $\mathbf P$-almost surely, 
\begin{equation}\label{eq-free-energy-formula}
\begin{aligned}
		\lim_{\eps\downarrow 0}\eps^2\log \mu_{\gamma,\eps^{-2}}(\Omega)&= \lim_{\eps\downarrow 0}\eps^2 \mathbf E\big[\log \mu_{\gamma,\eps^{-2}}(\Omega)\big] \\
		&= \lambda(\gamma):=-\sup_{\vartheta\in{{\mathfrak m_\gamma}}}{\mathscr I}_{\Phi_{\gamma}}(\vartheta) 
		=- \sup_{\vartheta\in{\mathfrak m_\gamma}} \,\,\int_{\X} \Phi_{\gamma}(\xi)\vartheta(\d\xi)	
		\end{aligned}
		\end{equation}
 The above supremum in \eqref{eq-free-energy-formula} 
 is attained, and we always have $\lambda(\gamma) \in [-\gamma^2(\phi\star\phi)(0)/2, 0]$. 
 \item Moreover,  there exists $\gamma_1=\gamma_1(d)$ such that $\gamma_1>0$ if $d\geq 3$ and if $\gamma \in (0,\gamma_1]$, then ${\mathfrak m_\gamma}=\{\delta_{\widetilde 0}\}$ is a singleton consisting of the Dirac measure at $\widetilde 0 \in \X$. Consequently, in this regime, $\sup_{{\mathfrak m_\gamma}}{\mathcal E}_{\Phi_{\gamma}}(\cdot) =0$ and also, 
 \begin{equation}\label{dgeq3}
 \lambda(\gamma)=0 \qquad\mbox{ $d\geq 3$ and  $\gamma\in (0,\gamma_c)$.}
 \end{equation}
Moreover, if $\gamma>\gamma_1$, then 
$\sup_{{\mathfrak m_\gamma}}{\mathscr I}_{\Phi_{\gamma}}>0$. 
\item Finally, if $\vartheta \in {\mathfrak m_\gamma}$ is a maximizer of ${\mathscr I}_{\Phi_{\gamma}}(\cdot)$ and $\vartheta(\xi)>0$ for $\xi=(\widetilde\alpha_i)_{i\in I}\in \X$,  
then $\sum_{i\in I}\alpha_i(\R^d)=1$. In other words, any maximizer  of \eqref{eq-free-energy-formula} assigns positive mass only to those 
elements of $\X$ whose total mass add up to one.
\end{itemize}
\end{theorem} 
\begin{proof}

We will briefly sketch the argument and refer to \cite{BM19} for details. To derive \eqref{eq-free-energy-formula}, note that $\log \mu_{\gamma,T}(\Omega)$ can be decomposed 
\begin{equation}\label{rewrite}
\log \mu_{\gamma,T}(\Omega)= M_T - \int_0^T \Phi_\gamma(\widetilde{\mathbb Q}_t) \d t
\end{equation} 
in terms of a square-integrable martingale 
$M_T=\gamma \int_0^T \int_{\R^d} \E^{\widehat\mu_{\gamma,t}}\big[\phi(y- \omega_t)\big] \dot B(t,y) \d y \d t$ and an additive functional $\int_0^T \Phi_\gamma(\widetilde{\mathbb Q}_t) \d t$, where 
$\Q_t={\widehat\mu_{\gamma,t}[\omega_t \in \cdot]}\in \Mcal_1(\R^d)$ and $\widetilde\Q_t\in \widetilde{\mathcal X}$. 
 As a consequence, 
$$
\begin{aligned}
&\frac 1T\mathbf E[ \log\mu_{\gamma,T}(\Omega)]= \frac 1 T\int_0^T \mathbf E[ \Phi_\gamma(\widetilde\Q_t)]\d t= \mathbf E\big[ \mathscr I_{\Phi_{\gamma}}(\nu_T)\big], \quad\mbox{where}\\
&\nu_T=\frac 1 T\int_0^T \delta_{\widetilde\Q_t} \d t \in \Mcal_1(\X), \quad\mbox{and also} \\
& \frac 1T\mu_{\gamma,T}(\Omega) - \frac 1 T \int_0^T \Phi_\gamma(\widetilde\Q_t) \d t\to 0,\qquad\mbox{ almost surely w.r.t. $\mathbf P$}.
\end{aligned}
$$
Here $\mathscr I_{\Phi_\gamma}$ is defined in \eqref{eq-free-energy-formula}. Therefore 
$$
\liminf_{\eps\to 0} \eps^2 \log \mu_{\gamma,\eps^{-2}}(\Omega)= \liminf_{\eps\to 0} \mathscr I_{\Phi_\gamma}(\nu_{\eps^{-2}}) \qquad \mathbf P\mbox{-a.s.}
$$
Thus, studying the behavior of the left hand side reduces to studying the asymptotic behavior of $\nu_T$ in the space $\Mcal_1(\X)$. Because of the properties of the dynamics $\Pi_t$ 
 mentioned in Lemma \ref{lemma-m} and its proof, it can be shown that $d(\nu_T, m_\gamma)\to 0$ almost surely, for any $\gamma>0$ (where $d$ is a metric which induces the weak topology 
 on probability measures of the compactification $\X$). This can be used to derive the variational formula for the (quenched) free energy $\lim_{\eps\to 0}\eps^2 \log \mu_{\gamma,\eps^{-2}}(\Omega)= - \sup_{\vartheta \in \mathfrak m_\gamma} \int \Phi_\gamma(\xi) \vartheta(\d\xi)$, where the infimum is taken (and given the continuity 
of the map $\Pi_t$), attained over the {\it{compact}} set $\mathfrak m=\{\vartheta\in\Mcal_1(\X)\colon \Pi_t(\vartheta,\cdot)=\vartheta \,\forall t\geq 0\}$ of fixed points of $\Pi_t(\vartheta,\cdot)=\int \pi_t(\xi,\cdot)\vartheta(\d\xi)$ for $\vartheta\in \Mcal_1(\X)$. 
We refer to the proof of \cite[Theorem 3.7]{BM19} for details. 

Note that the second part of the first item  follows from Lemma \ref{lemma-Phi} (which implies continuity of $\widetilde{\mathcal X} \ni \xi \mapsto \Phi_\gamma(\xi)$, and therefore that of 
$\Mcal_1(\widetilde{\mathcal X}) \ni \vartheta \to \mathscr I_{\Phi_\gamma}(\vartheta)$) and the compactness of $\mathfrak m_\gamma$ shown in Lemma \ref{lemma-m}.
The bounds on $\sup_{{\mathfrak m_\gamma}}{\mathscr I}_{\Phi_{\gamma}}$ also follows from the bounds on $\Phi_\gamma$ shown in Lemma \ref{lemma-Phi}. 

For the second item, we recall the known fact that for any $d\geq 1$, $\gamma\mapsto \lambda(\gamma)$ is non-decreasing and continuous in $(0,\infty)$. Thus, \eqref{dgeq3} follows 
since $\lambda(\gamma)>0$ clearly implies that $\lim_{T\to\infty}\mu_{\gamma,T}(\Omega)=0$. Thus, with $\gamma_1:=\inf\{\gamma>0\colon \lambda(\gamma)>0\}$, it follows that $\gamma_1\geq \gamma_c$ with $\gamma_c$ being defined by uniform integrability as in Remark \ref{remark-MSZ} (in fact, it is conjectured that $\gamma_1=\gamma_c$). This shows \eqref{dgeq3}. 
For the third item we refer to \cite[Lemma 4.4]{BM19}.   
\end{proof} 

\begin{lemma}\label{lemma-BC}
For any sequence of events $(A_\eps)_{\eps} \subset \Omega$ with $\P_0(A_\eps)>0$ for all $\eps>0$, it holds $\mathbf P$-almost surely that 
\begin{equation}\label{BC}
\lim_{\eps\downarrow 0} \eps^2 \log \mu_{\gamma,\eps^{-2}}(A_\eps)= \lim_{\eps\downarrow 0} \eps^2 \mathbf E\big[\log\mu_{\gamma,\eps^{-2}}(A_\eps)\big].
\end{equation}
\end{lemma} 
\begin{proof}
We note that if $\P_0(A_\eps)>0$, then it holds $\mathbf P$-a.s. that, for any $\eps, t>0$ and $x\in \R^d$, 
$$
D_{t,x} \log \mu_{\gamma,\eps^{-2}}(A_\eps)= \gamma \eps^2  \E^{\widehat\mu_{\gamma,\eps^{-2}}}\big[ \phi(\omega_t - y) \1_{A_\eps}\big], 
$$
where $D_{t,x}$ denotes the Malliavin derivative. Moreover, $\mathbf{P}$-a.s. it holds that  \begin{align*}
	\int_{0}^{\eps^{-2}}\int_{\R^d}(D_{t,x} \log \mu_{\gamma,\eps^{-2}}(A_\eps))^2\d x\d t&=\gamma^2\eps ^{4}\int_{0}^{\eps^{-2}}\int_{\R^d}\mu_{\gamma,\eps^{-2}}^{\otimes 2}[\phi(\omega_t-y)\phi(\omega'_t-y)A_\eps^{\otimes 2}]\d x\d t\\
	&\leq (\phi\star\phi(0))\gamma^2\eps^2,
\end{align*} 
so that $D\log\mu_{\gamma,\eps^{-2}}(A_\eps)\in L^{\infty}(\mathcal{E},L^2([0,\eps^{-2}]\otimes \R^d]))$ and $\|D\log\mu_{\gamma,\eps^{-2}}(A_\eps)\|_\infty^2\leq (\phi\star\phi(0))\gamma^2\eps^2$. By the Gaussian concentration inequality \cite[Theorem B.8.1]{UZ00}, for any $u>0$, 
$$
\mathbf{P}(|\log \mu_{\gamma,\eps^{-2}}(A_\eps)-\mathbf{E} \log \mu_{\gamma,\eps^{-2}}(A_\eps)|> u)\leq 2\exp\bigg(-\frac{u^2}{2\gamma^2 \eps^{2}(\phi\star \phi)(0)}\bigg).
$$
The result is now a consequence of the Borel-Cantelli lemma. 

\end{proof}

%
%
%
%

\section{Moments: Proof of Theorem \ref{prop-Lp-moments}.}\label{sec-proof-thm-moments}
The proof  of  Theorem \ref{prop-Lp-moments} will require the following result:
\begin{lemma}\label{lemma-running-max-L1}
	Given $T>0$ and $\gamma<\gamma_c$, let $M_T:=\sup_{0\leq s\leq T}\mu_{\gamma,s}(\Omega)$ and $M_\infty:=\lim_{T\to\infty}M_T$.  Then $\mathbf{E}[M_\infty]<\infty$.
\end{lemma}
Before proving the lemma, we introduce, for $u>0$, the stopping time \begin{equation}\label{eq-tau-def}
	\tau=\tau_u:=\inf\{T\geq 0:\mu_{\gamma,T}(\Omega)=u\}.
\end{equation} 

\begin{lemma}\label{lemma-convex-fun-bound}
	For every convex function $f:[0,\infty)\mapsto \mathbb{R}$ and $\gamma,T>0$, \begin{equation}\label{eq-eq7}
		\mathbf{E}\bigg[f\left(\frac{\mu_{\gamma,T}(\Omega)}{\mu_{\gamma,\tau}(\Omega)}\right),\tau\leq T\bigg]\leq \mathbf{P}(\tau\leq T)\mathbf{E}[f(\mu_{\gamma,T}(\Omega))].
	\end{equation}
\end{lemma}
\begin{proof}
Let $(\tau_n)_n$ be a discrete approximation of $\tau$ such that $\tau_n\searrow \tau$. If \eqref{eq-eq7} holds for $\tau_n$, for each $n$, then by Fatou's lemma we can deduce \eqref{eq-eq7} for $\tau$. Thus, we can assume that $\tau$ takes values in a discrete, countable set $\{t_i\}_{n\in \N}$ (which we may assume to be ordered in increasing order).  

By the Markov property, if $s\leq T$,\begin{equation*}
\begin{aligned}
		\frac{\mu_{\gamma,T}(\Omega)}{\mu_{\gamma,s}(\Omega)} &=\frac{\E_0\Big[\e^{\gamma H_s(\omega)-\frac{\gamma^2s}{2}\phi\star\phi(0)}\mu_{\gamma,T-s}(\Omega)\circ \theta_{s,\omega_s}\Big]}{\E_0\Big[\e^{\gamma H_s(\omega)-\frac{\gamma^2s}{2}\phi\star\phi(0)}\Big]} \\
		&=:\E_{0,s}\Big[\mu_{\gamma,T-s}(\Omega)\circ \theta_{s,\omega_s}\Big],
		\end{aligned}
	\end{equation*}
	where we remind the reader that $\theta_{t,x}$ is the space-time shift in the environment.
	
	If $[0,T]\cap \{t_i\}_{n\in \N}=\{t_1,\cdots,t_n\} $, then using the convexity of $f$ and Jensen's inequality, 
	
\begin{align*}
		\mathbf{E}\bigg[f\left(\frac{\mu_{\gamma,T}(\Omega)}{\mu_{\gamma,\tau}(\Omega)}\right),\tau\leq T\bigg]
	 &\leq \mathbf{E}\bigg[\E_{0,\tau}\bigg(f\left(\mu_{\gamma,T-\tau}(\Omega)\circ \theta_{\tau,\omega_\tau}\right)\bigg),\tau\leq T\bigg]\\
		&=\mathbf{E}\bigg[\E_0\bigg(\frac{\e^{\gamma H_\tau(\omega)-\frac{\gamma^2}{2}\tau\phi\star\phi(0)}f\left(\mu_{\gamma,T-\tau}(\Omega)\circ \theta_{\tau,\omega_\tau}\right)}{\E_0[\e^{\gamma H_\tau(\omega)-\frac{\gamma^2}{2}\tau\phi\star\phi(0)}]}\bigg),\tau\leq T\bigg]\\
		&=\sum_{i=1}^n \mathbf{E}\bigg[\E_0\bigg(\frac{\e^{\gamma H_{t_i}(\omega)-\frac{\gamma^2}{2}t_i\phi\star\phi(0)}f\left(\mu_{\gamma,T-t_i}(\Omega)\circ \theta_{t_i,\omega_{t_i}}\right)}{\E_0[\e^{\gamma H_{t_i}(\omega)-\frac{\gamma^2}{2}t_i\phi\star\phi(0)}]}\bigg),\tau=t_i\bigg]\\
		&=\sum_{i=1}^n \mathbf{E}\bigg[\mathbf{E}\bigg(\E_0\bigg[\frac{\e^{\gamma H_{t_i}(\omega)-\frac{\gamma^2}{2}t_i\phi\star\phi(0)}f\left(\mu_{\gamma,T-t_i}(\Omega)\circ \theta_{t_i,\omega_{t_i}}\right)}{\E_0[\e^{\gamma H_{t_i}(\omega)-\frac{\gamma^2}{2}t_i\phi\star\phi(0)}]}\bigg]\bigg\vert \mathcal{F}_{t_i}\bigg),\tau=t_i\bigg]
		\end{align*}
	By Lemma \ref{lemma-exchange-cond-probs}, the last expression is equal to 
	\begin{align*}
		&\sum_{i=1}^n \mathbf{E}\bigg[\E_0\bigg(\mathbf{E}\bigg[\frac{\e^{\gamma H_{t_i}(\omega)-\frac{\gamma^2}{2}t_i\phi\star\phi(0)}f\left(\mu_{\gamma,T-t_i}(\Omega)\circ \theta_{t_i,\omega_{t_i}}\right)}{\E_0[\e^{\gamma H_{t_i}(\omega)-\frac{\gamma^2}{2}t_i\phi\star\phi(0)}]}\bigg\vert \mathcal{F}_{t_i}\bigg)\bigg],\tau=t_i\bigg]\\
		&=\sum_{i=1}^n \mathbf{E}\bigg[\E_0\bigg(\frac{\e^{\gamma H_{t_i}(\omega)-\frac{\gamma^2}{2}t_i\phi\star\phi(0)}}{\E_0[\e^{\gamma H_{t_i}(\omega)-\frac{\gamma^2}{2}t_i\phi\star\phi(0)}]}\mathbf{E}\bigg[f\left(\mu_{\gamma,T-t_i}(\Omega)\circ \theta_{t_i,\omega_{t_i}}\right)\big\vert \mathcal{F}_{t_i}\bigg]\bigg),\tau=t_i\bigg]
		\end{align*}

Note that $f(\mu_{\gamma,T-t_i}(\Omega)\circ \theta_{t_i,\omega_{t_i}})$ is independent on $\mathcal{F}_{t_i}$, and so the last sum reduces to \begin{equation*}
	\sum_{i=1}^n \mathbf{P}\left[\tau=t_i\right]\mathbf{E}\left[f\left(\mu_{\gamma,T-t_i}(\Omega)\right)\right]\leq \mathbf{P}(\tau\leq T)\mathbf{E}\left[f\left(\mu_{\gamma,T}(\Omega)\right)\right],
\end{equation*}
	where we used that $f$ is convex, so that $(f(\mu_{\gamma,T}(\Omega)))_{T\geq 0}$ is a submartingale.
\end{proof}

\begin{proof}[{\bf Proof of Lemma \ref{lemma-running-max-L1}}]
	Given $\eps>0$, let 
	$$
	f_{\eps}(x):=\left(\frac{x}{\eps}-1\right)\wedge 1.
	$$
	 We note that $f_{\eps}$ is concave and for all $x\geq 0$, \begin{equation}\label{eq-eq5}
		\mathbbm{1}_{[\eps,\infty)}(x)\geq f_{\eps}(x)\geq \mathbbm{1}(x)_{[2\eps,\infty)}-\mathbbm{1}_{[0,\eps]}(x).
	\end{equation}
	The proof is complete once we can find $\eps>0$ such that for all $T>0$ and $u>1$, \begin{equation}\label{eq-eq6}
		\mathbf{P}(M_T>u)\leq 2\mathbf{P}(\mu_{\gamma,T}(\Omega)>u\eps).
	\end{equation}
	Indeed, \begin{equation*}
		\mathbf{E}[M_T]=\int_{0}^1\mathbf{P}(M_T>u)\d u+\int_{1}^{\infty}\mathbf{P}(M_T>u)\d u\leq 1+ \frac{2}{\eps}\mathbf{E}[\mu_{\gamma,T}(\Omega)]=1+\frac{2}{\eps}.
	\end{equation*}
	For a fixed $u>1$, recall the definition of  $\tau=\tau_u$ from \eqref{eq-tau-def}, so that $\mu_{\gamma,\tau}(\Omega)=u$. Hence, by Lemma \ref{lemma-convex-fun-bound} and Eq. \eqref{eq-eq5} \begin{equation}\label{eq-eq8}
		\begin{aligned}
			\mathbf{P}(\mu_{\gamma,T}(\Omega)>u\eps)&\geq \mathbf{P}\left(\tau\leq T,\frac{\mu_{\gamma,T}(\Omega)}{\mu_{\gamma,\tau}(\Omega)}>\eps \right)\\
		&\geq \mathbf{E}\left[f_{\eps}\left(\frac{\mu_{\gamma,T}(\Omega)}{\mu_{\gamma,\tau}(\Omega)}\right),\tau\leq T\right]\\
		&\geq \mathbf{P}(\tau\leq T)\mathbf{E}\left[f_{\eps}(\mu_{\gamma,T}(\Omega))\right]\\
		&\geq \mathbf{P}(\tau\leq T)\inf_{T\geq 0}\mathbf{E}\left[f_{\eps}(\mu_{\gamma,T}(\Omega))\right]\\
		&\geq  \mathbf{P}(\tau\leq T)\mathbf{E}\left[\inf_{T\geq 0} f_{\eps}(\mu_{\gamma,T}(\Omega))\right].
		\end{aligned}
	\end{equation}	
	We use again \eqref{eq-eq5} to deduce \begin{align*}
		\mathbf{E}\left[\inf_{T\geq 0} f_{\eps}(\mu_{\gamma,T}(\Omega))\right]&\geq \mathbf{E}\left[\inf_{T\geq 0} \mathbbm{1}_{\mu_{\gamma,T}(\Omega)\geq 2\eps }\right]- \mathbf{E}\left[\sup_{T\geq 0}\mathbbm{1}_{\mu_{\gamma,T}(\Omega)\leq \eps}\right]\\
		&=\mathbf{P}\left(\inf_{T\geq 0}\mu_{\gamma,T}(\Omega)\geq 2\eps \right)- \mathbf{P}\left(\inf_{T\geq 0}\mu_{\gamma,T}(\Omega)\leq \eps\right).
	\end{align*}
	To see the last equality, note that 
	$$
	\inf_{T\geq 0} \mathbbm{1}_{\mu_{\gamma,T}(\Omega)\geq 2\eps }=1 \quad\mbox{if and only if}\quad \inf_{T\geq 0}\mu_{\gamma,T}(\Omega)\geq 2\eps 
	$$
	 and 
	 $$
	 \sup_{T\geq 0}\mathbbm{1}_{\mu_{\gamma,T}(\Omega)\leq \eps}=1\quad\mbox{if and only if}\quad \inf_{T\geq 0}\mu_{\gamma,T}(\Omega)\leq \eps.
	 $$
	  Letting $\eps\to 0$ in the last display and noting that $\mathbf{P}(\inf_{T\geq 0}\mu_{\gamma,T}(\Omega)=0)=\mathbf{P}(\mu_{\gamma}(\Omega)=0)=0$, we conclude that \begin{equation*}
		\liminf_{\eps\to 0}\mathbf{E}\left[\inf_{T\geq 0} f_{\eps}(\mu_{\gamma,T}(\Omega))\right]\geq \mathbf{P}\left(\inf_{T\geq 0}\mu_{\gamma,T}(\Omega)\geq 0\right)-\mathbf{P}\left(\inf_{T\geq 0}\mu_{\gamma,T}(\Omega)=0\right)=1.
	\end{equation*}
	Thus, for $\eps>0$ small enough, and recalling the computations from \eqref{eq-eq8}, we conclude that \begin{equation*}
		\mathbf{P}(\mu_{\gamma,T}(\Omega)>u\eps)\geq \frac{1}{2}\mathbf{P}(\tau\leq T)=\frac{1}{2}\mathbf{P}(M_T>u),
	\end{equation*}
	which is \eqref{eq-eq6}.
\end{proof}
Now we are ready to give the 
\begin{proof}[{\bf Proof of Theorem \ref{prop-Lp-moments}}]
Set $\gamma<\gamma_c$. For some fixed $u>1$,  we recall the stopping time $\tau=\tau_u$ defined in \eqref{eq-tau-def}. For any $p>1$ and $T>0$, \begin{equation}\label{eq-eq9}
	\begin{aligned}
		\mathbf{E}[\mu_{\gamma,T}(\Omega)^p]&=\mathbf{E}[\mu_{\gamma,T}(\Omega)^p,\tau>T]+\mathbf{E}[\mu_{\gamma,T}(\Omega)^p,\tau\leq T]\\
		&\leq u^p+u^p\mathbf{E}\left[\left(\frac{\mu_{\gamma,T(\Omega)}}{\mu_{\gamma,\tau}(\Omega)}\right)^p,\tau\leq T\right]\\
		&\leq u^p+u^p\mathbf{P}(\tau\leq T)\mathbf{E}\left[\mu_{\gamma,T}(\Omega)^p\right],
	\end{aligned}
\end{equation}	where in the last line we used Lemma \ref{lemma-convex-fun-bound}. By Lemma \ref{lemma-running-max-L1},\begin{equation*}
	\mathbf{E}[M_\infty]=\int_{0}^\infty\mathbf{P}(M_\infty>u)\d u=1+\int_{1}^\infty \mathbf{P}(M_\infty>u)\d u<\infty,
\end{equation*}
so that there exists some $u>1$ satisfying $\mathbf{P}(M_\infty>u)\leq \frac{1}{2u}$. Since \begin{equation*}
		\mathbf{P}(\tau\leq T)=\mathbf{P}(M_T>u)\leq \mathbf{P}(M_\infty>u)\leq \frac{1}{2u},
	\end{equation*}
	we deduce from \eqref{eq-eq9} the upper bound \begin{equation*}
		\mathbf{E}[\mu_{\gamma,T}(\Omega)^p]\leq u^p+\frac{u^{p-1}}{2}\mathbf{E}[\mu_{\gamma,T}(\Omega)^p].
	\end{equation*}
	If we choose $p>1$ satisfying $u^{p-1}<2$, we conclude that for all $T>0$,\begin{equation*}
		\mathbf{E}[\mu_{\gamma,T}(\Omega)^p]\leq \frac{2u^p}{(2-u^{p-1})}.
	\end{equation*}
We turn to the proof of \eqref{eq-neg} and \eqref{eq-neg-all}. To show 	\eqref{eq-neg}, we use the stopping time $\tau=\tau_{1/u}$ for $u>1$ (recall \eqref{eq-tau-def}). Proceeding as in \eqref{eq-eq9}, noting that $x\mapsto x^{-q}$ is convex on $(0,\infty)$ and using Lemma \ref{lemma-convex-fun-bound}, we have \begin{equation}\label{eq-eq13}
	\begin{aligned}
		\mathbf{E}[\mu_{\gamma,T}(\Omega)^{-q}]&=\mathbf{E}[\mu_{\gamma,T}(\Omega)^{-q},\tau>T]+\mathbf{E}[\mu_{\gamma,T}(\Omega)^{-q},\tau\leq T]\\
		&\leq u^q+u^q\mathbf{E}\left[\left(\frac{\mu_{\gamma,T(\Omega)}}{\mu_{\gamma,\tau}(\Omega)}\right)^{-q},\tau\leq T\right]\\
		&\leq u^q+u^q\mathbf{P}(\tau\leq T)\mathbf{E}\left[\mu_{\gamma,T}(\Omega)^{-q}\right].
	\end{aligned}
\end{equation}
Since $\mathbf{P}(\tau\leq T)\leq \mathbf{P}(\inf_{T\geq 0} \mu_{\gamma,T}\leq u^{-1})\to 0$ as $u\to \infty$, we infer \begin{equation*}
	\sup_{T\geq 0}\mathbf{P}(\tau\leq T)\leq \frac{1}{4}
\end{equation*}
for $u>1$ large enough. If $q\in (0,1)$ is chosen so that $u^q\leq 2$, from \eqref{eq-eq13} we conclude that \begin{equation*}
	\mathbf{E}[\mu_{\gamma,T}(\Omega)^{-q}]\leq 2+\frac{\mathbf{E}[\mu_{\gamma,T}(\Omega)^{-q}]}{2},
\end{equation*}
and hence \begin{equation*}
	\sup_{T\geq 0}\mathbf{E}[\mu_{\gamma,T}(\Omega)^{-q}]\leq 4.
\end{equation*}
This finishes the proof of \eqref{eq-neg}. To show \eqref{eq-neg-all}, we appeal to \cite[Theorem 1.3]{CCM20}, where it was shown that in the ``$L^2$-region" (i.e., when the martingale $(\mu_{\gamma,T}(\Omega))_T$ is bounded in $L^2(\mathbf P)$), for all $q\in (0,\infty)$,
\begin{equation*}
	\sup_{T\geq 0}\mathbf{E}[\mu_{\gamma,T}(\Omega)^{-q}]<\infty.
\end{equation*}
Since Theorem \ref{thm-existence} holds in the entire weak disorder regime (and therefore, in particular in the $L^2$ region), the above estimate also implies \eqref{eq-neg-all} for all negative $q$ for $L^2$ disorder. 
\end{proof}






\noindent{\bf Acknowledgement:} We would like to thank Nathanael Berestycki for explaining his approach \cite{B17}, sharing many insights and comments on previous versions, 
Vincent Vargas for very inspiring communications and comments on the first draft, in particular about characterization of GMC 
and H\"older exponents and Hubert Lacoin for sharing valuable literature. The research of all three authors is funded by the Deutsche Forschungsgemeinschaft (DFG) under Germany's Excellence Strategy EXC 2044-390685587, Mathematics M\"unster: Dynamics-Geometry-Structure.

\end{document}